\documentclass[11pt]{article}
\usepackage[utf8]{inputenc}
\usepackage{soul}
\usepackage{comment}
\usepackage{pgfplots}
\usepackage{tikz}
\usepackage{authblk}

\newcommand{\required}[1]{\section*{\hfil \sharsokol2013advanced\hfil}}

\usepackage{verbatim,url}
\usepackage{graphicx, epsfig,subfloat,subfig}
\graphicspath{{LRA_figs_1/}}
\usepackage{bbm}
\usepackage{amssymb,amsmath,amsfonts,amsthm}
\usepackage{algorithm}
\usepackage{algpseudocode}
\usepackage{upref}

\newcommand{\Beq}{\begin{equation}}
\newcommand{\Eeq}{\end{equation}}
\newcommand{\be}{\begin{equation}}
\newcommand{\ee}{\end{equation}}
\newcommand{\bs}{\begin{split}}
\newcommand{\beq}{\begin{equation*}}
\newcommand{\eeq}{\end{equation*}}
\newcommand{\bal}{\begin{align}}
\newcommand{\eal}{\end{align}}

\newtheorem{theorem}{Theorem}
\numberwithin{theorem}{section}
\newtheorem{proposition}[theorem]{Proposition}

\newcommand{\Hb}{\mathbb{H}}

\numberwithin{equation}{section}

\newcommand{\bel}[1]{\begin{equation}\label{#1}}

\newcommand{\eel}[1]{{\label{#1}\end{equation}}}

\newcommand{\Hr}{\mathbb{H}_r}
\newcommand{\Lambar}{\bar{\Lambda}}
\newcommand{\Gg}{\tilde{\Lambda}_\gamma}
\newcommand{\Xig}{\Xi_\gamma}
\newcommand{\norm}[1]{\|#1\|}
\newcommand{\ip}[2]{\langle #1,\, #2 \rangle}

\renewcommand{\bar}[1]{\overline{#1}}

\newtheorem{lemma}[theorem]{Lemma}

\newtheorem{corollary}[theorem]{Corollary}

\theoremstyle{definition}

\newtheorem{remark}{Remark}

\usepackage{hyperref}
\usepackage{tikz-cd}
\usepackage[all]{xy}
\usepackage{graphicx}
\hypersetup{
    colorlinks=true,
    linkcolor=blue,
    filecolor=magenta,      
    urlcolor=cyan,
}

\usepackage{xcolor}

\usepackage{authblk} 

\title{An Efficient Bayesian Framework for Uncertainty Quantification in Nonlinear Imaging Inverse Problems}
\author[$a$]{Anuj Abhishek} 
\author[$b$]{Sakshi Arya}
\author[$c$] {Madhu Gupta}
\affil[$a$]{\small Department of Mathematics, Applied Mathematics and Statistics, Case Western Reserve University, USA\par anuj.abhishek@case.edu}
\affil[$b$]{\small Department of Mathematics, Applied Mathematics and Statistics, Case Western Reserve University, USA\par sakshi.arya@case.edu}
\affil[$c$]{\small Department of Mathematics, IIT Gandhinagar, India\par madhu.gupta@iitgn.ac.in}

\date{}
\begin{document}
\maketitle
\begin{abstract} 
Bayesian methods provide a natural framework for estimating a parameter in non-linear inverse problems and quantifying uncertainty in the estimation. However, when the forward model for such non-linear inverse problems is given by some Partial Differential Equation (PDE), Bayesian inference is typically carried out by resorting to MCMC methods. Since each MCMC iteration requires solving a PDE, these methods become computationally expensive and are often impractical for large-scale imaging problems. In this work, we develop a computationally efficient Bayesian framework for two such nonlinear imaging inverse problems: Quantitative Photoacoustic Tomography (QPAT) and Electrical Impedance Tomography (EIT). Building on a recently proposed two-stage pushforward methodology, we first formulate a Bayesian regression problem for an auxiliary variable whose posterior is available in closed form. This posterior is then pushed forward through a deterministic reconstruction map to obtain a posterior on the unknown parameter, avoiding MCMC sampling. We give a rigorous measure-theoretic justification to interpret the induced posterior as a Bayesian posterior and derive posterior contraction rates for both QPAT and EIT. Numerical results show that the proposed method provides accurate reconstructions and reliable uncertainty estimates at a arguably lower computational cost than standard Bayesian approaches.
\end{abstract}

\section{Introduction}\label{intro}

Non-linear inverse problems of coefficient identification in Partial Differential Equations (PDEs) arise naturally in  a wide range of imaging paradigms such as in medical imaging, geophysical imaging, and non-destructive testing, see e.g., \cite{kaipio2006statistical,kuchment_book,tarantola2005inverse,U_sur}.
In this article, we study two such prototypical non-linear inverse problems, namely, Quantitative Photo-Acoustic Tomography (QPAT), \cite{afkham2024bayesian,tarva_12} and Electrical Impedance Tomography (EIT),\cite{borcea2002electrical,cheney1999electrical,somersalo_92}.  Under the diffusion-approximation based formulation of QPAT , the objective is to recover a spatially varying optical absorption parameter of a medium based on the measurement of absorbed optical energy density within the medium. On the other hand, in the inverse problem of EIT, the objective is to recover the internal conductivity of the medium by injecting currents and measuring the corresponding voltages via electrodes placed only on the boundary of the medium. These two problems represent two fundamentally different types of non-linear inverse problems for recovering spatially varying parameters of a medium from (i) measurements accessible within the medium, e.g., in QPAT, and (ii) measurements that can be made only on the boundary of the medium, e.g., in EIT. Such inverse problems are typically highly ill-posed; i.e., the solutions are highly susceptible to noise in the data. To deal with this high-degree of ill-posedness in the problem, all practically useful reconstruction methods employ some form of regularization \cite{engl1996regularization,jin2012reconstruction,jin_12}.

The Bayesian formulation of such inverse problems can also be seen under such a regularization lens; where the choice of the prior measure acts as a regularization tool \cite{calvetti_somersalo_review}. It is thus not surprising that Bayesian formulation of inverse problems are typically well-posed under standard regularity assumptions, \cite{DashtiStuart2013,Stuart2010}.
To present the main ideas in an abstract Bayesian framework briefly, let $\mathcal{X}$ and $\mathcal{Y}$ be separable Hilbert spaces and  $G:\mathcal{X}\to\mathcal{Y}$ be a measurable map, and consider the inverse problem of recovering an unknown parameter $v\in\mathcal{X}$ from noisy data
\begin{align}\label{eq:1.3}
y = G(v)+\eta,
\end{align}
where $y\in\mathcal{Y}$ denotes the observation and $\eta\in\mathcal{Y}$ is observational noise. In the Bayesian formulation, the unknown parameter is modeled as a random variable with prior law $\mu_0$, and the solution of the inverse problem is the posterior distribution (or, more generally a measure) of $v$ given the data $y$. In finite dimensions, the posterior can often be written using Bayes' rule in terms of densities which are the Radon-Nikodym derivatives of the prior and posterior measures respectively with respect to the Lebesgue measure on that finite dimensional space. In infinite-dimensional settings, however, there is no translation-invariant Lebesgue measure on function spaces, so Bayes' theorem must be formulated directly at the level of measures. Under standard assumptions (see  \cite[Theorem 14.]{DashtiStuart2013}), the posterior measure $\mu^y$ can be shown to be absolutely continuous with respect to the prior $\mu_0$ and thus admits the Radon--Nikodym representation
\begin{align}\label{eq:intro-bayes}
\frac{d\mu^y}{d\mu_0}(v) =\frac{1}{Z_y}\exp\bigl(-\Phi(v,y)\bigr),
\end{align}
where $\Phi$ is the negative log-likelihood and $Z_y$ is a normalizing constant. Thus, within the Bayesian viewpoint, the posterior measure provides the natural notion of solution to the inverse problem. Under additional regularity assumptions on the likelihood functional, well posedness of the solution can be shown. In particular, one can show that small perturbations in the observed data result in small perturbations in the posterior measure. For details, we refer the reader to \cite{DashtiStuart2013,Stuart2010}.

The Bayesian approach to inverse problems has become increasingly popular in recent decades and has been applied successfully to the study of many inverse problems, such as \cite{cal_14,kaipio2006statistical, Stuart2010}. 
 Unlike classical deterministic reconstruction methods, which yield a single 
point estimate of the unknown parameter, the Bayesian approach encodes the solution as a full posterior probability distribution over the parameter space. This posterior distribution provides not only a point estimate (e.g., 
the posterior mean or MAP estimate), but also credible intervals and 
posterior predictive distributions that quantify reconstruction confidence 
and identify regions where the data are less informative. This is of much practical importance in imaging applications such as QPAT and EIT, where reliable uncertainty estimates are critical for assessing the trustworthiness of reconstructed material 
parameters.

Due to the inherent non-linearity of such problems, even when working with tractable Gaussian process priors, the posterior distribution (or, more generally measure) does not admit any closed form expression. Thus in order to estimate the parameter and do meaningful uncertainty quantification for the estimated parameter, one resorts to MCMC-sampling based methods such as pCN, \cite{cotter2013mcmc}. 
While such MCMC-based approaches are in principle very powerful, they 
suffer from a key weakness when the forward problem is modelled by a 
Partial Differential Equation (PDE): each likelihood evaluation entails 
a call to a PDE solver. Due to the highly non-linear nature of the 
inverse problems considered here, MCMC chains tend to mix poorly, 
requiring millions of iterations and thus millions of PDE solves to 
achieve meaningful Uncertainty Quantification (UQ). Since each PDE solve is computationally 
intensive, this introduces a severe computational bottleneck making their practical deployment for real-time imaging untenable, see e.g. \cite{afkham2024bayesian}.

To overcome this computational bottleneck, a two-stage hybrid-Bayesian 
framework for such inverse problems was recently proposed in \cite{koers2024}. This approach takes the following form: one first considers the linear problem of estimating some auxiliary parameter that depends on the forward data from the observations. This is done by formulating a non-parametric Bayesian regression problem for the said linear problem. Under the considered noise model and suitable priors (such as a 
Gaussian process prior), the posterior distribution for the auxiliary parameter 
given the observations is often available in closed form, thereby 
avoiding MCMC-based sampling entirely. In the second stage, this 
posterior distribution (or, measure) is pushed forward to the parameter space through 
a deterministic non-linear map drawn from the classical inverse problems 
literature. The resulting pushforward posterior can then be viewed 
as a Bayesian solution to the inverse problem, under the corresponding pushforward prior. In fact, a basic measure-theoretic observation, proved in Section \ref{sec: method}, shows that the
pushforward of the auxiliary posterior is absolutely continuous with respect
to the pushforward prior and therefore defines the posterior associated with
the induced prior on the parameter space. Conceptually related ideas based on pushforwarding measures through deterministic maps have been used in several other contexts, such as ensemble Kalman inversion \cite{iglesias_11} and projection-posterior methods, \cite{ghosal25}.

However, extending the framework introduced in \cite{koers2024} to specific inverse problems is not straightforward and requires careful problem-specific analysis. For QPAT, one needs to identify a suitable auxiliary variable and establish the required properties of the corresponding reconstruction map. For EIT, the auxiliary variable is the Dirichlet-to-Neumann (DtN) operator instead of a function. This requires constructing computationally tractable priors on the space of DtN operators. Moreover, since explicit inversion formulas are not available, the conductivity reconstruction relies on iterative algorithms or numerical reconstruction algorithms leading to additional computational challenges. 
\paragraph{Contributions.} In summary, the main contributions of this article are as 
follows:
\begin{enumerate}
    \item \textbf{Extension to QPAT and EIT.} We develop the  framework of \cite{koers2024} for QPAT and EIT.  
  The EIT case is particularly nontrivial because the auxiliary variable is the Dirichlet-to-Neumann (DtN) operator rather than a function. This requires constructing computationally tractable priors on operator spaces together with the corresponding posterior analysis, extending the function-space framework of \cite{koers2024}.

    \item \textbf{Theoretical analysis.} We establish the contraction rates for the induced pushforward posterior in both QPAT and EIT. While contraction rates for the direct Bayesian 
    posterior in related elliptic inverse problems have been studied 
    \cite{NiAb19,afkham2024bayesian}, to the best of our knowledge,  our analysis is the first to address 
    contraction rates for a pushforward posterior in this setting. 

    \item \textbf{Computational efficiency and uncertainty quantification.} 
    We provide, to the best of our knowledge, the first pushforward-based 
    Bayesian UQ framework for EIT and QPAT, achieving exact posterior sampling 
    without recourse to MCMC\cite{cotter2013mcmc} or QMC methods \cite{bazahica2025uncertainty}, 
    at a fraction of the computational cost. Through numerical experiments, 
    we demonstrate that the proposed approach yields credible intervals 
    and posterior uncertainty maps for both QPAT and EIT while also being computationally efficient.
\end{enumerate}
The remainder of this article is organized as follows. In 
Section~\ref{sec: two_imaging_problems}, we introduce the two inverse 
problems studied in this article, QPAT and EIT, describing 
the governing PDE models, the associated forward maps, and the 
statistical observation models for each. 
Section~\ref{sec: method} extends the two-stage hybrid-Bayesian 
methodology of \cite{koers2024} as applied to both problems. For 
each problem, we derive posterior contraction rates, and construct 
credible regions for the reconstructed parameters. The two problems 
are treated in parallel, illustrating two distinct regimes of the framework: inversion from internal data in QPAT 
(Section~\ref{subsec:qpat_method}) and inversion from boundary data in EIT(Section~\ref{subsec:eit_method}). Numerical experiments for both problems are reported in 
Section~\ref{sec:numerics}, where we demonstrate the computational 
efficiency and uncertainty quantification capabilities of the 
proposed approach. We conclude with a discussion in 
Section~\ref{sec: conclusion}.

\section{The two imaging inverse problems} \label{sec: two_imaging_problems}
In this section, we introduce the two non-linear inverse problems 
studied in this article: QPAT
and EIT. 
For each problem, we describe 
the governing PDE model, the associated forward map, and the 
statistical observation model that forms the basis of the two-stage 
hybrid-Bayesian framework employed in this article. QPAT is treated 
in Section~\ref{subsec: QPAT_description} and EIT in Section~\ref{subsec: EIT_description}.

\subsection{Quantitative Photo-Acoustic Tomography (QPAT)}\label{subsec: QPAT_description}
Quantitative photoacoustic tomography is a hybrid imaging method in which optical excitation and acoustic measurements are combined to reconstruct internal optical parameters of the medium, \cite{ammari_book,bal_review,Tarvainen2024QPAT}. In this modality a short optical pulse illuminates the medium and part of the optical energy is absorbed inside the tissue. This absorption produces a small thermoelastic expansion and generates acoustic waves that are measured on the boundary. From these acoustic data, one first reconstructs the absorbed energy distribution in the interior. The second step is the quantitative problem (QPAT) where one seeks to recover the optical parameters of the medium from this internal (reconstructed) data. In strongly scattering media the propagation of light is commonly described by the diffusion approximation which is the macroscopic model of light transport. In this article, we work under this diffusion approximation for QPAT and describe the resulting model, below.

Let $D\subset\mathbb{R}^d$, $d=2,3$, be a smooth bounded domain. Under the diffusion approximation, the PDE model for quantitative photoacoustic tomography (QPAT) is given by
\begin{equation}
-\nabla\cdot(\mu\nabla u)+\gamma u=0 \quad \text{in } D,
\qquad
u=g \quad \text{on } \partial D,
\label{eq:PDE}
\end{equation}
where $\mu$ and $\gamma$ denote the diffusion and absorption coefficients, respectively, $u$ denotes the light field, and the prescribed Dirichlet boundary function $g$ models the incoming optical illumination. Throughout this work, we assume that the diffusion coefficient $\mu\in C^{0,1}(D)$ is known, uniformly elliptic, and satisfies
$0<\frac{1}{M}\leq \mu\leq M<\infty.
$
The absorption coefficient $\gamma$, which is the parameter of interest, is assumed to satisfy
$\gamma\in H^s(D), s\ge1,
$
together with the pointwise bounds
$\frac{1}{m}\leq\gamma\leq m.
$
For convenience, we denote this admissible class by
$\gamma\in H_m^s(D).
$

The absorbed optical energy density is given by
$H:=\gamma u.
$
Under the above assumptions, if the prescribed boundary illumination satisfies $g\ge g_{\min}>0$ and $g\in H^{1/2}(\partial D)$, then standard elliptic theory implies that the boundary value problem admits a unique weak solution $u\in H^1(D)$, which is uniformly positive in $D$.

Here and throughout the paper, $H^s(X)$ denotes the Sobolev space of order $s$ on the domain $X$. The inverse problem in QPAT is to recover the optical parameters from measurements of the absorbed optical energy density $H$. In this article, we assume that the diffusion coefficient $\mu$ is known and fixed, and focus only on recovering the absorption coefficient $\gamma$. The corresponding forward map is therefore given by (see also \cite{afkham2024bayesian})
\[
G:\gamma\mapsto H:=\gamma u,
\qquad
G:H_m^s(D)\rightarrow H^1(D).
\]

The statistical observation model is taken to be the Hilbert-space Gaussian white noise model
\begin{equation}\label{eq:noisyM}
Y=H+\varepsilon\xi,\qquad H:=G(\gamma),
\end{equation}
where $\xi$ denotes Gaussian white noise process. This observation model admits several asymptotically equivalent formulations. First, relative to an orthonormal basis $(e_k)_{k\ge1}$ of $H^1(D)$, it can be written as
\begin{align}
Y_k:=\langle Y,e_k\rangle
=\langle H,e_k\rangle+\varepsilon\xi_k,
\label{GSS_QPAt}
\end{align}
where $\xi_k\sim N(0,1)$ are independent standard Gaussian random variables.
Secondly, in a pointwise discrete observational setting, one observes noisy pointwise evaluations of the absorbed energy density at design points $\{x_i\}_{i=1}^{N},\ N\to \infty$,
\begin{align}
Y_i=H(x_i)+\varepsilon W_i,
\label{discrete_QPAT}
\end{align}
where $W_1,\ldots,W_N$ are independent standard normal random variables.

Throughout the remainder of the paper, we use the abstract Gaussian white noise model \eqref{eq:noisyM} as a convenient notation for these equivalent formulations; see also \cite{afkham2024bayesian,Knapik2011Bayesian}. Since the forward map $H=G(\gamma)$ depends nonlinearly on the unknown absorption coefficient $\gamma$, recovering $\gamma$ from the noisy observations $Y$ constitutes a nonlinear statistical inverse problem. The proposed methodology for solving this problem is described in Section~\ref{sec: method}.

\subsection {Electrical Impedance Tomography (EIT)} \label{subsec: EIT_description}
Electrical impedance tomography is a non-invasive imaging modality in which electrical currents and voltages measured on the boundary of a body are used to infer the electrical conductivity distribution inside the medium. In a typical EIT experiment electrodes are attached to the boundary of the medium and small currents are injected while the resulting voltages are recorded. These measurements contain information about the internal conductivity of the medium (e.g., different tissues exhibit different electrical conductivities) and can be used in applications such as lung monitoring, stroke detection, and breast imaging \cite{Agnelli_2020,chere_02,frer_02}.

We now describe the mathematical formulation of the EIT forward and 
inverse problems.
Let $D \subset \mathbb{R}^d$ be a bounded domain with smooth boundary and let $\sigma(x)>0$ denotes the conductivity. We model the unknown conductivity as belonging to the following 
admissible class: 
\begin{align}\Sigma^{\alpha}_{m,D_0}=\left\{\sigma \in C(D) : \inf_{x\in D}\sigma(x) \ge m, \quad\sigma = 1 \text{ on } D \setminus D_0 \text { and }\norm{\sigma}_{H^{\alpha}}\leq {\frac{1}{m}}
\right\},
\end{align}
where $m \in (0,1)$ and $D_0$ is a domain compactly contained in $D$. In other words, the conductivity is strictly positive and differs from the known background value only inside $D_0$. In absence of any current source or sinks within the medium, the elliptic PDE that models the electric potential within the medium is
\begin{align}\label{eq:eit_pde}
\nabla \cdot (\sigma \nabla u) = 0 \quad \text{in } D.
\end{align}
To define the boundary operator associated with the conductivity $\sigma$, we consider the Dirichlet boundary value problem for the PDE \eqref{eq:eit_pde} with the associated Dirichlet data $u=f$ on $\partial D$ where $f\in H^{s+1}_{\diamond}(\partial D)$ belongs to some appropriate Sobolev class with integrals vanishing on the boundary. For $\sigma$ and $f$ as above it is well-known \cite{NiAb19,hank_11}, that the Dirichelt BVP admits a unique weak solution $u\in H^{\min\{1,s+3/2\}}(D)$ in an appropriate class of functions. The corresponding Neumann boundary data $\frac{\partial u_{\sigma,f}}{\partial \nu}\Big|_{\partial D}$
belongs to the space
$H^s_{\diamond}(\partial D) := \{ g \in H^s(\partial D) : \langle g,1\rangle_{L^2(\partial D)} = 0 \}.
$
This allows one to define the Dirichlet-to-Neumann operator
\[
\Lambda_\sigma : H^{s+1}(\partial D)/\mathbb{C} \to H^s_{\diamond}(\partial D),
\qquad
f \mapsto \frac{\partial u_{\sigma,f}}{\partial \nu}\Big|_{\partial D}.
\]
In practice currents are prescribed on the boundary and voltages are measured, corresponding to the Neumann-to-Dirichlet (N-t-D) operator, while theoretical analysis typically uses the Dirichlet-to-Neumann (D-t-N) map as above
which maps prescribed boundary voltages to the resulting current flux; the two formulations provide mathematically equivalent descriptions of the same measurements. The inverse problem of recovering $\sigma$ from a knowledge of this boundary operator (D-t-N) is known in the literature as the Calder\'on problem\cite{cald_80}, which is a highly non-linear ill-posed inverse problem. Following, \cite{NiAb19}, we consider noisy observations of the shifted Dirichlet-to-Neumann operator
$\tilde{\Lambda}_\sigma = \Lambda_\sigma - \Lambda_1,
$
where $\Lambda_1$ denotes the operator corresponding to the background conductivity $\sigma \equiv 1$. This centering removes the known background response and yields $\tilde{\Lambda}_\sigma$ that lies in a (Hilbert) space of Hilbert-Schmidt operators, allowing the observation model to be written as the Gaussian white noise model
\begin{align}\label{eq:noisyE}
Y = \tilde{\Lambda}_\sigma + \varepsilon W,
\end{align}
where $W$ is Gaussian white noise and $\varepsilon>0$ denotes the noise level. 
As in the QPAT setting, this observation model admits several asymptotically equivalent formulations, see \cite[Appendix D]{NiAb19}. First, in a practically motivated electrode measurement model, one observes
\begin{align}
Y_{p,q}
=
\langle
\widetilde{\Lambda}_\sigma[\phi_p],
\phi_q
\rangle_{L^2(\partial D)}
+
\varepsilon g_{p,q},
\qquad
p,q=1,\ldots,P,
\label{discrete:CEM}
\end{align}
where $\{\phi_p\}$ are functions supported on the electrode locations and
$g_{p,q}\sim N(0,1)$ are independent standard Gaussian random variables.

Alternatively, one may consider spectral measurements relative to an orthonormal basis $(\varphi_k)_{k\ge1}$ of $L^2(\partial D)$, leading to observations of the form
\begin{align}
Y_{j,k}
=
\langle
\widetilde{\Lambda}_\sigma[\varphi_j],
\varphi_k
\rangle_{L^2(\partial D)}
+
\varepsilon g_{j,k},
\label{GSS_EIT}
\end{align}
where again $g_{j,k}\sim N(0,1)$ are independent standard Gaussian random variables.

Throughout the remainder of the paper, we use the abstract Gaussian white noise model \eqref{eq:noisyE} as a convenient notation for these equivalent formulations. The statistical inverse problem is to recover the conductivity
$\sigma\in\Sigma^{\alpha}_{m,D_0}$
from noisy observations of the form \eqref{eq:noisyE}. We describe the proposed inversion methodology in the next section.




\section{Linear Method for Nonlinear Problems}
\label{sec: method}
We begin this section by mentioning that inversion methods for both of the problems considered here are available in both classical (deterministic)  setting as well in a Bayesian setting, see e.g. \cite{Abhi_20,NiAb19,afkham2024bayesian,arridge_12,mg_20,mg_21,jin2012reconstruction,kaipio_00,tanja_19}. Furthermore, to speed up computations in MCMC-based reconstruction methods for similar inverse problems, model-reduction based approaches (e.g., \cite{cui2016scalable,cui2015data}) and surrogate modeling for the forward operator (see \cite{li2014adaptive,yan2017convergence}) have also been proposed which speed up likelihood evaluations in MCMC samplers. Parallel to these works, modern operator-learning based approaches have also been successfully deployed for use in such MCMC-based Bayesian inversion methods as surrogate-models for the forward operator, see e.g. \cite{gao2023adaptive,majee_25,zhou2020adaptive}.

As described in section \ref{intro}, the two-stage hybrid Bayesian framework as proposed in \cite{koers2024} takes a different route compared to the usual method adopted in the literature on Bayesian inverse problems which begins by placing a prior on the parameter of interest and then implementing MCMC based approach to explore the posterior so obtained. In the new framework, the Bayesian update is performed on an auxiliary linear problem, while the actual parameter of interest is obtained by deterministic pushforward of the posterior measure obtained after the Bayesian update.
The analysis for QPAT and EIT is carried out in Sections~\ref{subsec:qpat_method} and~\ref{subsec:eit_method} respectively. Before moving forward however, we will establish a basic measure theoretic result showing that pushforward of the auxiliary posterior is the induced  measure corresponding to the pushforward of the prior measure of the auxiliary parameter which is absolutely continuous with respect to the prior measure, see Figures \ref{fig:1} and \ref{fig:2}. Theorem \ref{thm:1} below forms the starting point of our analysis. This theorem is not novel and forms the central rationale for proving posterior consistency results for various non-linear Bayesian inverse problems, see \cite[section 2.2]{afkham2024bayesian} and, \cite[section 5]{NiAb19}. In our work however, following the philosophy of \cite{koers2024}, we use this not just for proving theoretical consistency results for the posterior but rather to devise an algorithmic framework to obtain a posterior on the respective parameter of interest for a given non-linear inverse problem.

\begin{theorem}\label{thm:1}
Let $X$ and $\Gamma$ be separable Banach spaces, and let
$e:X\to\Gamma$
be a Borel measurable map. Furthermore, let $e$ be injective and $e^{-1}:e(X)\to X$ is also Borel measurable. Let $\mu_0$ be a Borel probability measure on $X$, and define the pushforward prior
$\Pi_0:=e_{\#}\mu_0,
$
i.e.,
$\Pi_0(B)=\mu_0(e^{-1}(B)),
\forall
B\in\mathcal B(\Gamma).$
Suppose the data are generated according to the following linear observation model,
\[
Y=Ax+\varepsilon W,
\] where $A:X\to \mathcal{Y}$ is a linear map, $W$ is Gaussian white noise and $\varepsilon>0$ denotes the noise level.
Let us denote the corresponding negative log likelihood by $\varphi(x;y)$ where $\varphi:X \times \mathcal{Y}\to \mathbb{R}$ is a measurable function. Suppose the posterior measure on $X$ is given by
\[
\frac{d\mu^y}{d\mu_0}(x)
=
\frac{1}{Z(y)}\exp(-\varphi(x;y)),
\]
where
$Z(y)=\int_X \exp(-\varphi(x;y))\,\mu_0(dx)>0.
$ Consider the pushforward measure
$
\Pi^y:=e_{\#}\mu^y.
$
Then $\Pi^y$ is absolutely continuous with respect to $\Pi_0$, and
\[
\frac{d\Pi^y}{d\Pi_0}(\gamma)
=
\frac{1}{Z(y)}
\exp\bigl(-\varphi( e^{-1}(\gamma);y)\bigr)
\]
for $\Pi_0$-almost every $\gamma\in e(X)$. 
Equivalently, the pushforward measure on $\gamma$ is the Bayesian posterior associated with the pushforward prior $\Pi_0=e_{\#}\mu_0$ and the transformed forward map
$\widetilde G(\gamma)=Ae^{-1}(\gamma),\,
\gamma\in e(X).
$
\end{theorem}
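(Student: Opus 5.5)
The plan is a direct change-of-variables argument for pushforward measures. Define $h:\Gamma\to[0,\infty)$ by
\[
h(\gamma)=\frac{1}{Z(y)}\exp\bigl(-\varphi(e^{-1}(\gamma);y)\bigr)\ \text{ for } \gamma\in e(X),\qquad h(\gamma)=0 \ \text{ otherwise}.
\]
The aim is to show that $\Pi^y(B)=\int_B h\,d\Pi_0$ for every $B\in\mathcal B(\Gamma)$. This gives both absolute continuity and the stated density. The final assertion about $\widetilde G(\gamma)=Ae^{-1}(\gamma)$ is then a rewriting of the likelihood. Since $\varphi(x;y)$ depends on $x$ only through $Ax$ in the white noise model, $\varphi(e^{-1}(\gamma);y)$ is exactly the negative log-likelihood of the model $Y=\widetilde G(\gamma)+\varepsilon W$.

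The first step is measurability of $h$, which I expect to be the main technical obstacle. Since $X$ and $\Gamma$ are separable Banach spaces, hence Polish, and $e$ is an injective Borel map, the Lusin--Souslin theorem shows that $e(X)$ is a Borel subset of $\Gamma$. The same theorem shows that $e^{-1}:e(X)\to X$ is Borel, which is in any case assumed. Hence $\gamma\mapsto(e^{-1}(\gamma),y)$ is Borel on $e(X)$. Composing with the measurable $\varphi$ and extending by zero on the Borel set $\Gamma\setminus e(X)$ makes $h$ Borel on $\Gamma$. If one prefers to avoid Lusin--Souslin, $e(X)$ is at least universally measurable, hence $\Pi_0$-measurable, and the argument below is unchanged after completing $\Pi_0$. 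Note also that $\Pi_0(e(X))=\mu_0(e^{-1}(e(X)))=\mu_0(X)=1$, so the values of $h$ off $e(X)$ are irrelevant. This is why the identity is only claimed $\Pi_0$-a.e. on $e(X)$.

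The second step is the change-of-variables formula $\int_\Gamma f\,d(e_{\#}\mu_0)=\int_X f\circ e\,d\mu_0$ for nonnegative Borel $f$. I would prove it for indicators by the definition of $\Pi_0$ and extend by simple functions and monotone convergence. Apply it with $f=\mathbf 1_B\,h$. By injectivity, $h(e(x))=\frac{1}{Z(y)}\exp(-\varphi(x;y))$ for every $x\in X$. Therefore
\[
\int_B h\,d\Pi_0=\int_X \mathbf 1_{e^{-1}(B)}(x)\,\frac{1}{Z(y)}e^{-\varphi(x;y)}\,\mu_0(dx)=\mu^y(e^{-1}(B))=\Pi^y(B),
\]
where the middle equality is the assumed Radon--Nikodym representation of $\mu^y$.

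The third step concludes. The display shows that $\Pi_0(B)=0$ implies $\Pi^y(B)=0$, so $\Pi^y\ll\Pi_0$. By uniqueness of Radon--Nikodym derivatives, $d\Pi^y/d\Pi_0=h$ $\Pi_0$-a.e., which is the claimed formula on $e(X)$. Taking $B=\Gamma$ recovers $\int h\,d\Pi_0=1$, consistent with the normalizing constant $Z(y)$ being unchanged under the pushforward. Comparing with the form \eqref{eq:intro-bayes} identifies $\Pi^y$ as the Bayesian posterior for prior $\Pi_0$ and forward map $\widetilde G$.
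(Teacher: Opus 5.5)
Your proposal is correct and follows essentially the same route as the paper. Both apply the change-of-variables identity $\int_\Gamma F\,d\Pi_0=\int_X F\circ e\,d\mu_0$ to $F=\mathbf 1_B\exp(-\varphi(e^{-1}(\cdot);y))$ and then invoke uniqueness of the Radon--Nikodym derivative; your extra care (extending $h$ by zero off $e(X)$, using Lusin--Souslin to get $e(X)$ Borel with $\Pi_0(e(X))=1$, and using nonnegative rather than bounded integrands since $e^{-\varphi}$ need not be bounded) tidies up points the paper's proof passes over silently.
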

\begin{proof}
Let $B\in\mathcal B(\Gamma)$. Observe that,
$\Pi^y(B):=\mu^y(e^{-1}(B)).
$
By the statement in the theorem,
we have the posterior measure $\mu^y$ is absolutely continuous with respect to the prior measure, $\mu_0$ i.e. $\mu^y\ll\mu_0$. We will now show that the pushforward of the posterior measure is absolutely continuous with respect to that of the prior measure. To that end consider $B\in \mathcal{B}(\Gamma)$ such that $\Pi_0(B)=0$, then clearly,
$\mu_0(e^{-1}(B))
=\Pi_0(B)=
0.$
Thus,
$\Pi^y(B)
=\mu^y(e^{-1}(B))
=0,
$ by the fact that $\mu^y\ll \mu_0$.
Therefore $\Pi^y\ll\Pi_0$.

Again using the Radon--Nikodym representation of $\mu^y$,
\[
\Pi^y(B)
=
\frac{1}{Z(y)}
\int_{e^{-1}(B)}
\exp(-\varphi(x;y))
\,\mu_0(dx)
=
\frac{1}{Z(y)}
\int_X
\mathbf 1_B(e(x))
\exp(-\varphi(x;y))
\,\mu_0(dx).
\]
Since $e$ is injective and $e^{-1}:e(X)\to X$ is Borel measurable, we have
$\int_X
F(e(x))
\,\mu_0(dx)
=
\int_\Gamma
F(\gamma)
\,\Pi_0(d\gamma)
$
for every bounded measurable function $F$. Choosing
\[
F(\gamma)
=
\mathbf 1_B(\gamma)
\exp\bigl(-\varphi(e^{-1}(\gamma);y)\bigr),
\]
we obtain
\[
\Pi^y(B)
=
\frac{1}{Z(y)}
\int_B
\exp\bigl(-\varphi(e^{-1}(\gamma);y)\bigr)
\,\Pi_0(d\gamma).
\]
Since this identity holds for every Borel set $B$, the Radon-Nikodym theorem yields
\[
\frac{d\Pi^y}{d\Pi_0}(\gamma)
=
\frac{1}{Z(y)}
\exp\bigl(-\varphi(e^{-1}(\gamma);y)\bigr)
\]
for $\Pi_0$-almost every $\gamma\in e(X)$. Finally, by defining
$\widetilde G(\gamma)
=Ae^{-1}(\gamma),
\,
\gamma\in e(X),
$
we prove that $\Pi^y$ is indeed the Bayesian posterior associated with the pushforward prior $\Pi_0=e_{\#}\mu_0$ and the transformed forward map $\widetilde G$.
\end{proof}

\subsection{Bayesian inversion in QPAT} \label{subsec:qpat_method}
Based on the strategy outlined in the introduction, we now give details of our two-step hybrid Bayesian inversion method for QPAT. The first step involves specifying a deterministic inversion map for $\gamma$ when we are given observations of the (non-noisy) data $H$ and specified boundary data $g$. To that end, we recall the following simple inversion method from \cite[Page 9, eq. (17)]{bal_11}. We outline the steps in detail in order to get Lipschitz estimates for the reconstruction map that will be needed for posterior analysis. Consider
\begin{equation}
L:=-\nabla\cdot(\mu\nabla\cdot)
\end{equation}
so that \eqref{eq:PDE} can be rewritten as $Lu=\gamma u \text{ in } D$. Let us denote $H:=\gamma u$. Thus, in a non-noisy set-up we can recover $u$ by solving the Dirichlet BVP,
\begin{align*}
Lu&=H \quad \text{in } D\\
u&=g \quad \text{on }\partial D.
\label{eq: PAT equation}
\end{align*} Note that from elliptic regularity for uniformly elliptic operators with Lipschitz coefficients $\mu$ as above and the boundary condition $g\geq g_{\min}>0$, there exists a solution $u\in H^1(D)$ which is positive, i.e. $u>0$. Once we have recovered $u$, next, we recover $\gamma$ pointwise by $\gamma=H/u$. We will exploit this simple pointwise inversion in our two-step process for the Bayesian recovery of $\gamma$ given noisy observed data, $Y$. To summarize this approach, we first define an auxiliary variable, $v:=Lu$, such that
\begin{equation}
v=H \quad \text{in } D.
\end{equation}
Then the above-mentioned pointwise recovery of the unknown parameter of interest $\gamma$ from $v$ amounts to carrying out the following process.


Let $K: L^2(D) \rightarrow H^2(D) \cap H^1_0 (D)$ be the Dirichlet inverse of $L$ and $\tilde g$ be a function such that:
\begin{align}
L(Kw)&=w,\quad Kw|_{\partial D}=0,\\
L\tilde g&=0,\quad \tilde g|_{\partial D}=g.
\end{align}
Then it is easy to check that $u=Kv+\tilde g$ solves \eqref{eq:PDE}. Thus the pointwise inversion map is given by:
\begin{equation}
\gamma = e(v) := \,\frac{v}{K v+\tilde g}.
\end{equation}
We briefly recall the two-step approach of \cite{koers2024}, illustrated in figure \ref{fig:1} below. A Bayesian posterior is first computed for the auxiliary parameter $v$ by solving a linear regression problem. The posterior for the parameter of interest $\gamma$ (corresponding to a pushforward prior) is then obtained by applying the deterministic inversion map $e$, i.e., $\Pi_\gamma(\cdot \mid Y)=e_\#\Pi_v(\cdot \mid Y)$. In the case, where exact sampling for the auxiliary parameter can be done (e.g. for an additive Gaussian model with Gaussian process priors), this framework allows one to bypass expensive MCMC based sampling to explore the posterior for the parameter of interest. 
\begin{figure}[ht]
\centering
\[
\begin{tikzcd}[row sep=5em, column sep=8em]
\Pi_v 
\arrow[r, "\text{Bayes' theorem}"{name=U, above}, 
       "\text{observe } Y"{below}]
\arrow[d, "e_\#"', "\text{pushforward}"{right}] 
& \Pi_v(\cdot \mid Y) 
\arrow[d, "e_\#", "\text{pushforward}"{left}] \\
\Pi_\gamma := e_\# \Pi_v 
\arrow[r, dashed, "\text{induced}"']
& \Pi_\gamma(\cdot \mid Y) := e_\#\Pi_v(\cdot \mid Y)
\end{tikzcd}
\]
\caption{Two-step Bayesian procedure: inference is first performed on the auxiliary variable $v$, and the posterior on $\gamma$ is then induced by pushforward through $e$. }
\label{fig:1}
\end{figure}

\subsubsection{Lipschitzness of the deterministic recovery map} In order to transfer the inference result from the auxiliary variable $v$ to the parameter of interest $\gamma$ in a principled manner, we need to examine the properties of the inversion map, $e(\cdot)$. 
{Recall the deterministic recovery map
\[
e(v) := \,\frac{v}{Kv+\tilde g}.
\]
In what follows we will work on subsets $V_\star \subset L^2(D)$ on which $e$ is well-defined and stable.
Specifically, we assume that for $v\in V_\star$ the corresponding field
$u(v):=Kv+\tilde g
$
satisfies a uniform positivity condition, i.e., there exists constants $m>0$, $M<\infty$, such that:
\begin{equation}\label{eq:u_pos}
\inf_{x\in D} u(v)(x) \ge m>0,
\qquad v\in V_\star,
\end{equation}
and that the induced absorption coefficient is uniformly bounded,
\begin{equation}\label{eq:gamma_bdd}
\sup_{x\in D} |e(v)(x)| \le M < \infty,
\qquad v\in V_\star.
\end{equation}
(These are natural in QPAT: $g\ge g_{\min}>0$ ensures $u>0$, and the physical constraint
$\gamma\in H^s_m(D)$ corresponds to uniform bounds.)}
\begin{lemma}\label{lem: lipschitz_gamma}
Let $D$ be a smooth Lipschitz domain and $\mu>0$ be the known diffusion parameter as above. Let $u_i = K v_i + \tilde g$ for $i=1,2$, with $u_i \ge m >0$ in $D$. 
Define $\gamma_i :=  v_i/u_i$. 
Then the map $v \mapsto \gamma$ is locally Lipschitz in $L^2(D)$:
$\|\gamma_1 - \gamma_2\|_{L^2}
\le C \|v_1 - v_2\|_{L^2}, \,C:=\frac{1+M\|K\|}{m}
$
where $M=\mathrm\sup_D \gamma_2$ and $\|K\|$ is the operator norm of the bounded map $K:L^2(D)\to H^{2}(D)\hookrightarrow L^2(D)$.
\end{lemma}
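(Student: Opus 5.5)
The plan is to use the algebraic splitting that matches the form of the constant $C=(1+M\|K\|)/m$. Since $\tilde g$ is common to both fields, we have $u_1-u_2 = K(v_1-v_2)$. We then write
\[
\gamma_1-\gamma_2=\frac{v_1}{u_1}-\frac{v_2}{u_2}
=\frac{v_1-v_2}{u_1}+v_2\Bigl(\frac{1}{u_1}-\frac{1}{u_2}\Bigr)
=\frac{v_1-v_2}{u_1}-\frac{v_2}{u_2}\cdot\frac{u_1-u_2}{u_1}
=\frac{(v_1-v_2)-\gamma_2\,K(v_1-v_2)}{u_1}.
\]
This is a pointwise identity in $D$. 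It makes sense because $u_1,u_2\ge m>0$, so no division by zero occurs.

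Next we take $L^2(D)$ norms. The factor $1/u_1$ is bounded by $1/m$ pointwise, so it can be pulled out of the norm. The term $\gamma_2 K(v_1-v_2)$ is controlled by $\sup_D|\gamma_2|\,\|K(v_1-v_2)\|_{L^2}\le M\|K\|\,\|v_1-v_2\|_{L^2}$. Here $\|K\|$ is the operator norm of $K$ viewed as a map $L^2(D)\to L^2(D)$, obtained by composing $K:L^2\to H^2\cap H^1_0$ with the continuous embedding $H^2\hookrightarrow L^2$. The triangle inequality then gives
\[
\|\gamma_1-\gamma_2\|_{L^2}\le \frac{1}{m}\bigl(1+M\|K\|\bigr)\|v_1-v_2\|_{L^2},
\]
which is the claimed bound.

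The only point needing care is the boundedness of $K$ on $L^2$ and the meaning of $M$. The first is standard elliptic regularity for the Dirichlet problem for $L=-\nabla\cdot(\mu\nabla\cdot)$ with Lipschitz, uniformly elliptic $\mu$ on a smooth domain: it gives $\|Kw\|_{H^2}\le C\|w\|_{L^2}$, and in fact the weaker $H^1_0$ estimate from Lax–Milgram together with Poincaré already suffices for $L^2\to L^2$ boundedness. For the second, the statement defines $M$ as $\sup_D\gamma_2$, but the estimate actually uses $\sup_D|\gamma_2|$. These agree when $\gamma_2\ge0$, as is the case on $V_\star$ under \eqref{eq:gamma_bdd} and the physical constraint. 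I expect no genuine obstacle.

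The bound is \emph{local} because $M$ depends on $\gamma_2$. On $V_\star$, however, the constant becomes uniform, with $M$ taken from \eqref{eq:gamma_bdd} and $m$ from \eqref{eq:u_pos}.
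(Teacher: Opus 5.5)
Your proposal is correct and follows essentially the same route as the paper: the same splitting $\gamma_1-\gamma_2=\frac{(v_1-v_2)-\gamma_2K(v_1-v_2)}{u_1}$ via $u_1-u_2=K(v_1-v_2)$, then pulling out $1/u_1\le 1/m$ and bounding $\|I-\gamma_2K\|\le 1+M\|K\|$. Your remark that the estimate really uses $\sup_D|\gamma_2|$, and needs it finite (guaranteed on $V_\star$ by \eqref{eq:gamma_bdd}), is a fair sharpening of a point the paper leaves implicit.
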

\begin{proof}
Consider
\begin{align*}
  \gamma_1 - \gamma_2
= \frac{v_1}{u_1} - \frac{v_2}{u_2}
= \frac{v_1 - v_2}{u_1} + v_2 \left( \frac{1}{u_1} - \frac{1}{u_2} \right).  
\end{align*}
Using
\begin{align*}
\frac{1}{u_1} - \frac{1}{u_2} = \frac{u_2 - u_1}{u_1 u_2},
\quad u_2 - u_1 = K(v_2 - v_1),
\end{align*} we obtain
\begin{align*}
\gamma_1 - \gamma_2
= \frac{v_1 - v_2}{u_1} - \frac{v_2}{u_1 u_2} K(v_1 - v_2).
\end{align*}
Taking norms yields
\[
\|\gamma_1 - \gamma_2\|_{L^2}
\le \frac{\|I - \gamma_2 K\|}{m}\,\|v_1 - v_2\|_{L^2}.
\]
Finally, since $\|I - \gamma_2 K\| \le 1 + M\|K\|$, the result follows as the operator norm of $K$ is bounded, see e.g.\cite[chapter 6]{evans_book}.
\end{proof}

\subsubsection{Posterior evaluation}\label{qpat:posterior}
As stated above, in the QPAT setting the auxiliary variable is the absorbed energy density $v := H=\gamma u$, so that the observation model becomes the linear Gaussian model
\[
Y = v+\varepsilon \xi .
\] where this abstract model may be understood in the sense of the discrete model given by \eqref{GSS_QPAt} or, \eqref{discrete_QPAT}. Let us place a Mat\'ern Gaussian prior on $v\sim \Pi_v := \mathcal{N}(0,C_{\nu,\ell}),$ where \(C_{\nu,\ell}\) denotes the Mat\'ern covariance operator with smoothness parameter
\(\nu\) and length-scale \(\ell\). By \cite[Proposition 3.1]{Knapik2011Bayesian},  the posterior distribution of \(v\) given \(Y\) is Gaussian,
\[
\Pi_v(\cdot \mid Y)=
{N}(m_{v\mid Y},C_{v\mid Y}),
\]  where, $m_{v\mid Y}
=
C_{\nu,\ell}
\left(\varepsilon^2 I+C_{\nu,\ell}\right)^{-1}Y,$ and $C_{v\mid Y}
=C_{\nu,\ell}-C_{\nu,\ell}
\left(\varepsilon^2 I+C_{\nu,\ell}\right)^{-1}
C_{\nu,\ell}.$
\begin{proposition}[Posterior contraction result for QPAT]
Let $D \subset \mathbb{R}^d$, $d\in\{2,3\}$, be a bounded $C^{1,1}$ domain. 
Let $\mu \in C^{0,1}(\overline D)$ be uniformly elliptic and let $g \ge g_{\min}>0$. 
Define
\[
L := -\nabla\cdot(\mu\nabla\cdot), 
\qquad 
K : L^2(D) \to H^2(D)\cap H^1_0(D)\hookrightarrow L^2(D)
\]
to be the Dirichlet inverse of $L$, and let $\tilde g$ denote the $L$--harmonic extension of $g$.

For $v\in L^2(D)$, define
\[
u(v) := Kv + \tilde g,
\qquad
e(v) := \frac{v}{u(v)} .
\]

Assume the linear white--noise observation model
\[
Y = v + \varepsilon \xi, 
\]
where $v=\gamma u$ and the abstract model above is to be understood in the sense of \eqref{discrete_QPAT}. 
Let $\Pi_v$ be a suitable prior for $v$ on admissible subsets of $L^2(D)$, and define the induced (pushforward) prior for $\gamma$ as
$\Pi_\gamma := e_\# \Pi_v =  \Pi_v \circ e^{-1}.$ We will also denote the corresponding posteriors by $\Pi_{v}(\cdot|Y)$ and $\Pi_{\gamma}(\cdot|Y):=\Pi_v(\cdot|Y)\circ e^{-1}.$ respectively.

Suppose there exist measurable sets $V_\epsilon \subset L^2(D)$ and a sequence $\delta_\epsilon\downarrow 0$ such that:
\begin{enumerate}
\item[(A1)] for every $M_\epsilon \to \infty$,
$
\Pi_v\left( \|v-v_0\|_{L^2} > M_\epsilon \delta_\epsilon \middle| Y \right)
\xrightarrow{P_{v_0}} 0,
$
\item[(A2)] $\Pi_v(V_\epsilon \mid Y) \xrightarrow{P_{v_0}} 1$,
\item[(A3)] the map $e : V_\epsilon \to L^2(D)$ is locally Lipschitz on $V_\epsilon$. 
\end{enumerate}

Then, under (A1), (A2), and (A3) for every $\widehat M_\epsilon \to \infty$, 
\[
\Pi_\gamma\left( \|\gamma-\gamma_0\|_{L^2} > \widehat M_\epsilon \delta_\epsilon \middle| Y \right)
\xrightarrow{P_{\gamma_0}} 0 .
\]
\end{proposition}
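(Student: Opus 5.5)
We transfer the contraction statement for $v$ to $\gamma$ through the pushforward identity $\Pi_\gamma(B\mid Y)=\Pi_v(e^{-1}(B)\mid Y)$, in the spirit of Theorem~\ref{thm:1}, together with the Lipschitz estimate of Lemma~\ref{lem: lipschitz_gamma}. Fix the truth $\gamma_0=e(v_0)$ with $v_0=\gamma_0u_0$, so that $P_{\gamma_0}=P_{v_0}$ as data laws. We interpret (A3) quantitatively: there is a constant $C$, independent of $\varepsilon$, such that $\|e(v)-e(v_0)\|_{L^2}\le C\|v-v_0\|_{L^2}$ for all $v\in V_\epsilon$. This is exactly what Lemma~\ref{lem: lipschitz_gamma} gives with $v_2=v_0$, provided $V_\epsilon$ consists of $v$ with $u(v)\ge m$. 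We may take $C=(1+M\|K\|)/m$, where $M=\sup_D\gamma_0$, $m$ is the positivity floor from \eqref{eq:u_pos}, and $\|K\|$ is the $L^2\to L^2$ norm of the Dirichlet inverse, bounded by elliptic regularity on the $C^{1,1}$ domain. The lemma also needs $u(v_0)\ge m$; this follows from $g\ge g_{\min}>0$ and the maximum principle.

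Given $\widehat M_\epsilon\to\infty$, we set $M_\epsilon:=\widehat M_\epsilon/C\to\infty$. We then split the event:
\[
\Pi_\gamma\bigl(\|\gamma-\gamma_0\|_{L^2}>\widehat M_\epsilon\delta_\epsilon\,\big|\,Y\bigr)
=\Pi_v\bigl(\|e(v)-e(v_0)\|_{L^2}>\widehat M_\epsilon\delta_\epsilon\,\big|\,Y\bigr)
\le \Pi_v(V_\epsilon^c\mid Y)+\Pi_v\bigl(v\in V_\epsilon,\ \|e(v)-e(v_0)\|_{L^2}>\widehat M_\epsilon\delta_\epsilon\,\big|\,Y\bigr).
\]
On $V_\epsilon$, the Lipschitz bound turns the second event into a subset of $\{C\|v-v_0\|_{L^2}>\widehat M_\epsilon\delta_\epsilon\}=\{\|v-v_0\|_{L^2}>M_\epsilon\delta_\epsilon\}$. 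The first term tends to $0$ in $P_{v_0}$-probability by (A2), and the second by (A1). Their sum therefore tends to $0$ in probability, which is the claim.

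The main obstacle is the uniformity of the Lipschitz constant. A merely local Lipschitz property would let the constant depend on $v$ or on $\varepsilon$, and then the rescaling $M_\epsilon=\widehat M_\epsilon/C$ would fail. The fix is to build $V_\epsilon$, or a fixed subset $V_\star$ of it, from the uniform bounds \eqref{eq:u_pos}--\eqref{eq:gamma_bdd}, so that the constant of Lemma~\ref{lem: lipschitz_gamma} does not change with $\varepsilon$. If only an $\varepsilon$-dependent constant $C_\epsilon$ is available, the same argument still works as long as $C_\epsilon$ grows more slowly than $\widehat M_\epsilon$, since then $\widehat M_\epsilon/C_\epsilon\to\infty$. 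A smaller point is measurability: $e^{-1}(B)$ must be Borel so the pushforward identity makes sense. This holds because $e$ is continuous on $V_\epsilon$, as the Lipschitz bound shows.
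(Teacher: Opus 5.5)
Your proof is correct and is essentially the paper's argument: the pushforward identity $\Pi_\gamma(A\mid Y)=\Pi_v(e^{-1}(A)\mid Y)$, the split of $e^{-1}(A)$ over $V_\epsilon$ and $V_\epsilon^c$, the Lipschitz inclusion from Lemma~\ref{lem: lipschitz_gamma} with $M_\epsilon=\widehat M_\epsilon/C$, and then (A1)--(A2). Your insistence on an $\epsilon$-independent constant is exactly what the paper's proof uses implicitly, even though (A3) only says ``locally Lipschitz''; with $v_2=v_0$, only $u(v)\ge m$ on $V_\epsilon$ and $\gamma_0\in L^\infty$ are needed. One caveat: your fallback with a growing $C_\epsilon$ would give the conclusion only when $\widehat M_\epsilon/C_\epsilon\to\infty$, not for every $\widehat M_\epsilon\to\infty$.
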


\begin{proof}

{First of all, note that Assumption (A1) is satisfied for Gaussian priors under a linear Gaussian observation model from standard Bayesian non-parametric theory, see e.g. \cite[section 3]{Knapik2011Bayesian}. Now we establish the following relation between posterior measures for $\gamma$ to posterior measure of for $v$.}
Recall that $\gamma=e(v)$ and $v_0:=\gamma_0 u_0$. Hence,
$\gamma-\gamma_0 = e(v)-e(v_0).
$
Fix $\epsilon$ and let $v\in V_\epsilon$. 
From Lemma \ref{lem: lipschitz_gamma}
it follows,
\begin{equation}\label{eq:transfer_event_inclusion}
\Bigl\{ v\in V_\epsilon:\ \|v-v_0\|_{L^2}\le r \Bigr\}
\subset
\Bigl\{ v\in V_\epsilon:\ \|e(v)-e(v_0)\|_{L^2}\le C r \Bigr\}. 
\end{equation}

Let $\widehat M_\epsilon\to\infty$ be arbitrary. Define $M_\epsilon := \widehat M_\epsilon/C$; then $M_\epsilon\to\infty$.
Consider the following tail event for $\gamma$,
$A_\epsilon := \Bigl\{\gamma:\ \|\gamma-\gamma_0\|_{L^2}> \widehat M_\epsilon\delta_\epsilon\Bigr\}.
$
From the definition of the pushforward measure it is easy to see that,
\begin{align}
\Pi_{\gamma}(A_\epsilon\mid Y)
= \Pi_{v}(e^{-1}A_\epsilon|Y) \label{eq: pushforward}
\end{align}
Note that,
$e^{-1}A_\epsilon=\{v:\|e(v)-e(v_0)\|_{L^2}>\widehat M_\epsilon\delta_\epsilon\}
\subset
\{v:\|v-v_0\|_{L^2}>\frac{\widehat M_\epsilon}{C}\delta_\epsilon
= M_\epsilon\delta_\epsilon\}.
$
Furthermore, using the fact that
$e^{-1}(A_\epsilon)=
\bigl(e^{-1}(A_\epsilon)\cap V_\epsilon\bigr)\cup \bigl(e^{-1}(A_\epsilon)\cap V_\epsilon^c\bigr),
$
and from \eqref{eq: pushforward} we get,
\begin{align}
\Pi_\gamma(A_\epsilon \mid Y)
&=\Pi_v\bigl(e^{-1}(A_\epsilon)\mid Y\bigr)=
\Pi_v\bigl(e^{-1}(A_\epsilon)\cap V_\epsilon \mid Y\bigr)
+
\Pi_v\bigl(e^{-1}(A_\epsilon)\cap V_\epsilon^c \mid Y_\epsilon\bigr) \notag \\
&\leq
\Pi_v(V_\epsilon^c \mid Y)+\Pi_v\bigl(e^{-1}(A_\epsilon)\cap V_\epsilon \mid Y\bigr)
\notag\\
&\leq \Pi_v(v\notin V_\epsilon\mid Y_\epsilon)+\Pi_v\left(v \in V_\epsilon:\ \|v-v_0\|_{L^2}>M_\epsilon\delta_\epsilon \middle| Y\right).
\label{eq:split}
\end{align}
By assumptions (A1) and (A2) in the proposition, both terms converge to $0$ in $P_{v_0}$-probability, hence
\[
\Pi_{\gamma}\left(\gamma: \|\gamma-\gamma_0\|_{L^2}>\widehat M_\epsilon \delta_\epsilon \middle| Y\right)\to^{P_{\gamma_0}} 0,
\]
which proves the desired contraction result.
\end{proof}

\subsubsection{Preimage credible regions}
\label{subsec:uq_vdv}

We now describe a construction of credible regions for $\gamma$
that follows the general framework of \cite{koers2024}.
This approach emphasizes exact transfer of posterior credibility and
frequentist coverage from the linear parameter $v$ to the nonlinear
parameter $\gamma$.
\begin{remark}[Credible region and coveregae probability in Hilbert Space]
    In this work, we will need to work with Hilbert spaces of functions as well as that of operators. In order to talk about credible regions in the different Hilbert spaces used in various contexts in this paper, we will use the following general measure theoretic definition for Credible regions.
Let $H$ be a Hilbert space and $\Pi(\cdot|Y)$ be a posterior probability measure on the Borel $\sigma$-algebra of $H$. Then any measurable set, $C(Y)$ satisfying $\Pi(C(Y)|Y)= 1-\alpha$
 is called a $(1-\alpha)$-credible region. {From a frequentist point of view, the true parameter is fixed and the randomness comes from the noisy data. Thus, if \(v_0\in H\) denotes the true value of the parameter and the data are generated according to the model with truth \(v_0\), we write \(P_{v_0}\) for the corresponding probability law. The coverage of the credible region \(C(Y)\) at \(v_0\) is defined by
$P_{v_0}\bigl(C(Y)\ni v_0\bigr).$}
  
\end{remark}
\noindent Recall that in our two-step formulation the auxiliary parameter $v$ and
the parameter of interest $\gamma$ are related deterministically by
$
\gamma = e(v)
\text{ and } v_0 = e^{-1}(\gamma_0),
$
on the admissible set where the recovery map $e$ is well-defined and
one-to-one. Let $\widetilde C^v_\epsilon(Y)\subset L^2(D)$ be any posterior
credible region for $v$ with credible level $1-\alpha$, i.e.
\begin{equation}
\Pi_v\big(\widetilde C^v_\epsilon(Y)\mid Y\big)=1-\alpha.
\label{eq:vdv_cred_v}
\end{equation}
Here, subscript $\epsilon$ on $\widetilde{C}$ denotes the noise scaling and superscript denotes the unknown parameter whose credible region is constructed.

\paragraph{Preimage credible region for $\gamma$.}
Following \cite{koers2024}, we define the induced credible region
for $\gamma$ by the image of $\widetilde C^v_\epsilon(Y)$ under the recovery map:
\begin{equation}
C^{\gamma}_\epsilon(Y)
:= e\big(\widetilde C^v_\epsilon(Y)\big)
= \{\gamma:\ \gamma=e(v)\ \text{for some } v\in \widetilde C^v_\epsilon(Y)\}.
\label{eq:vdv_preimage}
\end{equation}

\begin{theorem} \label{thm: 3.4}
Recall that by construction of the two-stage hybrid-Bayesian framework, 
the posterior distribution of $\gamma$ given $Y$ is the pushforward of 
the posterior distribution of $v$ under the deterministic map $e$, i.e., 
$\Pi_\gamma(\cdot \mid Y) = \Pi_v(\cdot \mid Y) \circ e^{-1},
$
where $e$ is well-defined and one-to-one on the support of 
$\Pi_v(\cdot \mid Y)$. Let $\widetilde{C}_\epsilon^v(Y) \subset L^2(D)$ be 
a $(1-\alpha)$ posterior credible set for $v$ in the sense of the 
Definition in \eqref{eq:vdv_cred_v}, and define the induced credible set for $\gamma$ by
$
C_\epsilon^\gamma(Y) := e\bigl(\widetilde{C}_\epsilon^v(Y)\bigr)
$ as in \eqref{eq:vdv_preimage}.
Then the following hold:
\begin{enumerate}
    \item[\textnormal{(i)}] \textbf{Exact posterior credibility:}
$\Pi_\gamma(C_\epsilon^\gamma(Y) \mid Y) = \Pi_v(\widetilde{C}_\epsilon^v(Y) \mid Y).$
    \item[\textnormal{(ii)}] \textbf{Exact coverage transfer:}
    For $\gamma_0 = e(v_0)$, $
    P_{\gamma_0}(\gamma_0 \in C_\epsilon^\gamma(Y)) 
    = P_{v_0}(v_0 \in \widetilde{C}_\epsilon^v(Y)).$
    \item[\textnormal{(iii)}] \textbf{Diameter control:} For any subset $A\subset L^2(D)$ define $\operatorname{diam}_{L^2}(A):=\sup\{\lVert f-g\rVert_{L^2}:f,g\in A\}$. On the event 
    $\widetilde{C}_\epsilon^v(Y) \subset V_\epsilon$,
    $\operatorname{diam}_{L^2}(C_\epsilon^\gamma(Y)) 
    \leq C\, \operatorname{diam}_{L^2}(\widetilde{C}_\epsilon^v(Y)).
    $
\end{enumerate}

\end{theorem}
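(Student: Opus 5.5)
All three parts follow from injectivity of $e$ on the relevant set, together with Lemma~\ref{lem: lipschitz_gamma} for part (iii). Throughout, let $S$ be the set on which $e$ is defined and one-to-one; it carries full mass under $\Pi_v(\cdot\mid Y)$.

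For (i), I would use the definition of the pushforward, $\Pi_\gamma(C^\gamma_\epsilon(Y)\mid Y)=\Pi_v\bigl(e^{-1}(e(\widetilde C^v_\epsilon(Y)))\mid Y\bigr)$, and then the set identity $e^{-1}(e(A))\cap S = A\cap S$. The inclusion $\supseteq$ is trivial. For $\subseteq$: if $v\in S$ and $e(v)=e(v')$ with $v'\in A\cap S$, then injectivity on $S$ forces $v=v'$. Since $S$ has full posterior mass, the two posterior probabilities agree.

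A measurability point must be settled first, namely that $e(\widetilde C^v_\epsilon(Y))$ is Borel. I would handle it using the Borel measurability of $e^{-1}$ on $e(X)$ from Theorem~\ref{thm:1}: $e(A)=(e^{-1})^{-1}(A)$ is then Borel relative to $e(X)$. If one prefers not to rely on this, the Lusin--Souslin theorem (injective Borel images of Borel sets in Polish spaces are Borel) gives the same conclusion. I expect this to be the only genuinely delicate step, and I would state it as the standing assumption it is.

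For (ii), I would argue pointwise in $Y$. Since $\gamma_0=e(v_0)$ with $v_0\in S$, and using the same injectivity, $\gamma_0\in e(\widetilde C^v_\epsilon(Y))$ holds if and only if $v_0\in\widetilde C^v_\epsilon(Y)$. The two events are therefore identical as subsets of the sample space. Moreover, the data law is the same under either parametrization, $P_{\gamma_0}=P_{v_0}$, because $Y=v+\varepsilon\xi$ depends on $\gamma_0$ only through $v_0$. Hence the two probabilities coincide.

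For (iii), I would work on the event $\widetilde C^v_\epsilon(Y)\subset V_\epsilon$. Take any $\gamma_1,\gamma_2\in C^\gamma_\epsilon(Y)$ and write $\gamma_i=e(v_i)$ with $v_i\in\widetilde C^v_\epsilon(Y)$. Lemma~\ref{lem: lipschitz_gamma}, with the uniform constants \eqref{eq:u_pos}--\eqref{eq:gamma_bdd} valid on $V_\epsilon$ so that $C=(1+M\|K\|)/m$ does not depend on the pair, gives
\[
\|\gamma_1-\gamma_2\|_{L^2}\le C\|v_1-v_2\|_{L^2}\le C\operatorname{diam}_{L^2}(\widetilde C^v_\epsilon(Y)).
\]
Taking the supremum over all such pairs yields the claimed diameter bound.
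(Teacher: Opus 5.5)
Your proposal is correct and follows the paper's proof: (i) from the pushforward identity together with $e^{-1}\bigl(e(\widetilde C^v_\epsilon(Y))\bigr)=\widetilde C^v_\epsilon(Y)$ by injectivity, (ii) from the equivalence of the events $\{\gamma_0\in C^\gamma_\epsilon(Y)\}$ and $\{v_0\in\widetilde C^v_\epsilon(Y)\}$, and (iii) from the Lipschitz bound of Lemma~\ref{lem: lipschitz_gamma} followed by a supremum over pairs. Your additions go beyond what the paper writes: measurability of the image set, working on a full-mass set $S$, noting $P_{\gamma_0}=P_{v_0}$, and requiring a pair-independent Lipschitz constant on $V_\epsilon$. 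One point to tidy is that in (i) and (ii) the witness $v'\in\widetilde C^v_\epsilon(Y)$ must also lie in $S$ for injectivity on $S$ to apply; this holds if you read $e$ as defined only on $S$, so that $e(\widetilde C^v_\epsilon(Y))=e(\widetilde C^v_\epsilon(Y)\cap S)$.
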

\begin{proof}
Recall the pushforward relation,
$
\Pi_\gamma(A \mid Y) = \Pi_v(e^{-1}(A) \mid Y)
$
for any measurable set $A \subset L^2(D)$. Substituting, $A = C_\epsilon^\gamma(Y)$ and using the definition of $C_\epsilon^\gamma(Y)$, we get
$
e^{-1}(C_\epsilon^\gamma(Y)) = \widetilde C_\epsilon^v(Y),
$
which proves (i).\\
Now consider the events, $\{\gamma_0 \in C_\epsilon^\gamma(Y)\}$ and $\{v_0 \in \widetilde C_\epsilon^v(Y)\}$. We will show they are equivalent. To that end, note that by definition:
$\gamma_0 \in C_\epsilon^\gamma(Y)
 \iff \gamma_0 = e(v) \text{ for some } v \in \widetilde C_\epsilon^v(Y).$ Since $e$ is one to one and $\gamma_0 = e(v_0)$, this holds if and only if $v = v_0$.
 From this (ii) follows.\\
By definition and from (ii) above, we have
$\operatorname{diam}_{L^2}(C_\epsilon^\gamma(Y))
=
\sup_{v_1, v_2 \in \widetilde C_\epsilon^v(Y)} \| e(v_1) - e(v_2) \|_{L^2(D)}.
$
Using the fact that $e$ is Lipschitz and by taking the supremum over $v_1, v_2 \in \widetilde C_\epsilon^v(Y)$ we get (iii).


\end{proof}

\subsection{Bayesian Inversion in EIT} \label{subsec:eit_method}
We now turn to the second inverse problem considered in this article. 
Let $\Lambda_{\sigma}$ be the DtN operator and $\Lambda_1$ be the known background operator. Consider now the shifted DtN operator, $\tilde{\Lambda}_\sigma:=\Lambda_\sigma-\Lambda_1$. The shifted DtN operator belongs in the space of Hilbert-Schmidt operators. For the reader's convenience, we briefly recall the definition of the Hilbert--Schmidt operator space from \cite{NiAb19}. Let $
\{ \phi_k := \phi_k^{(0)} \}_{k=0}^{\infty}$ be an orthonormal basis of $L^2(\partial\Omega)$ consisting of real-valued Laplace-Beltrami eigenfunctions. Removing the constant eigenfunction $\phi_0$ yields orthonormal basis of mean-free space of functions, \(L^2_\diamond(\partial\Omega)\). By an appropriate rescaling of these eigenfunctions, $\{ \phi_k^{(r)}  \}_{k=1}^{\infty} $ one can obtain an orthonormal basis
of all \( H^r(\partial \Omega)/\mathbb{C} \)  In this work, we assume $r=0,1$. Now, define the  tensor-product operators, \( b_{jk}^{(r)} : H^r(\partial \Omega) \to L^2(\partial \Omega) \) 
\[
b_{jk}^{(r)}(f) = \phi_j^{(r)} \otimes \phi_k^{(0)} (f) := \langle f, \phi_j^{(r)} \rangle_{H^r(\partial D)} \phi_k^{(0)}, \quad f \in H^r(\partial \Omega),
\]
and define the space of linear operators
\[
\mathbb{H}_r := \left\{ T : H^r(\partial \Omega) \to L^2(\partial \Omega), \quad T = \sum_{j,k=1}^\infty t_{jk} b_{jk}^{(r)} : t_{jk} \in \mathbb{R}, \sum_{j,k=1}^\infty t_{jk}^2 < \infty \right\}.
\]
The elements of \( \mathbb{H}_r \) are the ‘Hilbert–Schmidt’ operators between \( H^r(\partial \Omega) \) and \( L^2(\partial \Omega) \). Furthermore, this space is
equipped with the Hilbert--Schmidt inner product
\[
\langle S, T \rangle_{\mathbb{H}_r} := \sum_{j,k=1}^\infty s_{jk} t_{jk} \equiv \sum_{j,k=1}^\infty \langle S \phi_j^{(r)}, \phi_k^{(0)} \rangle_{L^2(\partial D)} \langle T \phi_j^{(r)}, \phi_k^{(0)} \rangle_{L^2(\partial D)}.
\] From \cite [Lemma 18]{NiAb19}, we know that $\tilde{\Lambda}_{\gamma}\in \Hb_r$ for any $r$. 
Recall the observation model \eqref{eq:noisyE} introduced in 
Section~\ref{subsec: EIT_description} \cite[eq. 12]{NiAb19}:
\begin{align}\label{eq: EIT_model}
Y=\tilde{\Lambda}_{\sigma}+\varepsilon W, 
\end{align}
where, $\Lambda_{\sigma}-\Lambda_1:=\tilde{\Lambda}_{\sigma}$ is the `shifted' D-t-N operator. Recall that as per \cite{NiAb19}, observing this model amounts to writing:
\[
Y(T) = \langle \tilde{\Lambda}_\sigma, T \rangle_{\mathbb{H}^r} + \varepsilon W(T),
\] for some test element $T\in \mathbb{H}_r$ and where $W$ is a centered Gaussian process satisfying
\[
\mathbb{E}[W(T)] = 0,
\qquad
\operatorname{Cov}(W(T_1), W(T_2)) = \langle T_1, T_2 \rangle_{\mathbb{H}^r}.
\] Carrying on with the same program as above, we will break our problem again into two parts: We will define a noisy linear observation model for the forward data (shifted Dirichlet-to-Neumann operator). In this step, we will do a Bayesian update to get the posterior on shifted D-t-N operators. Subsequently, we will push forward this posterior distribution to obtain a posterior distribution on the conductivity. In order to begin our discussion, we will need to choose a prior on the space to which such shifted DtN operators belong. Unlike the case in QPAT, this task of choosing a reasonable prior measure for the space to which DtN operators belong is highly non-trivial. This is because, the space to which the DtN operators belong is not a `function space'. We will first outline a way to construct priors on this space and subsequently explain our inversion pipeline. This construction follows closely the ideas in \cite[section 4.1] {Calvetti_2019}.
\subsubsection{Prior on the space of shifted DtN operators} \label{sec:priorDtN} As already stated, $\tilde{\Lambda}_{\sigma}\in \mathbb{H}_r$ is a (Hilbert) space of Hilbert-Schmidt operators. Unlike the QPAT setting, where a Gaussian prior can be specified directly on the auxiliary function, the auxiliary variable in EIT is an operator. The following  construction provides a computationally tractable Gaussian prior on this operator space. We start by defining first a Gaussian prior for the admissible parameters (log conductivities). The idea from \cite{Calvetti_2019} is to then generate the samples of the shifted DtN operators by evaluating the DtN operators corresponding to the sampled conductivities. This is equivalent to specifying a pushforward prior on the space of shifted DtN operators itself by pushforwarding a (Gaussian) measure on the space of conductivities to the space of shifted DtN operators. Subsequently a PCA type analysis is done to identify a Gaussian prior on the space of shifted DtN operators whose (empirical) mean and covariance are the same as the (empirical) first two moments of the pushforward prior on the space of DtN operators obtained as above. Once this Gaussian prior on the space of shifted DtN operators is specified, the Bayesian update and subsequent pushforward analysis proceed exactly as in the general framework. To put this construction on a concrete footing, we begin by recalling the following lemma that shows the existence of a probability measure with bounded second moments on the space of shifted DtN operators.

\begin{lemma}\label{thm:finite_moment}
    \cite[section 6.3]{castro24}Let us recall \eqref{eq:noisyE} and rewrite  $Y=\tilde{\Lambda}_\sigma+\epsilon W:=G(\sigma)+\epsilon W.$ If $\eta$ is a probability measure with bounded second moment (in particular, a Gaussian measure) on the space of admissible conductivities, then it's pushforward $G_{\sharp}\eta=\eta \circ G^{-1}$ is a probability  measure on the space of admissible operators with bounded second moment as well.
\end{lemma}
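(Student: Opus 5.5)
The pushforward measure $G_\sharp\eta$ is automatically a Borel probability measure on $\mathbb{H}_r$, so the content of the lemma is the bound $\int_{\mathbb{H}_r}\|T\|_{\mathbb{H}_r}^2\,(G_\sharp\eta)(dT)<\infty$. I would prove this in three steps, taking the parameter space to be log-conductivities $\theta=\log\sigma$ on which $\eta$ (e.g.\ Gaussian) lives, with $\theta=0$ outside $D_0$.

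\textbf{Step 1 (measurability).} I would first check that $G:\sigma\mapsto\tilde\Lambda_\sigma\in\mathbb{H}_r$ is Borel measurable on the admissible class, so that $G_\sharp\eta(B)=\eta(G^{-1}(B))$ is well defined. The simplest route is to show $G$ is continuous. For a perturbation $\sigma_1-\sigma_2$ supported in $D_0$, the difference $\Lambda_{\sigma_1}-\Lambda_{\sigma_2}$ is represented through the interior solutions on $D_0\Subset D$ (Alessandrini's identity). Interior elliptic regularity, applied on the region between $D_0$ and $\partial D$ where $\sigma\equiv1$, then gives smoothing to all orders. This yields $\|\Lambda_{\sigma_1}-\Lambda_{\sigma_2}\|_{\mathbb{H}_r}\le C\|\sigma_1-\sigma_2\|_{L^\infty}$, which is exactly the mechanism behind \cite[Lemma 18]{NiAb19}.

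\textbf{Step 2 (change of variables).} Applying the change of variables $\int F\,d(G_\sharp\eta)=\int F\circ G\,d\eta$ with $F(T)=\|T\|_{\mathbb{H}_r}^2$ (as in the proof of Theorem \ref{thm:1}), it suffices to show $\int\|\tilde\Lambda_\sigma\|_{\mathbb{H}_r}^2\,\eta(d\sigma)<\infty$.

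\textbf{Step 3 (quantitative bound).} The key estimate, taken from the proof of \cite[Lemma 18]{NiAb19}, is
\[
\|\tilde\Lambda_\sigma\|_{\mathbb{H}_r}\le C(D,D_0,r)\,\bigl(1+m^{-1}\bigr)^{k}\,\|\sigma-1\|_{L^\infty(D_0)},
\]
with the constant depending on $\sigma$ only through its ellipticity bounds. On the admissible class $\Sigma^\alpha_{m,D_0}$ every quantity here is bounded, so $\|\tilde\Lambda_\sigma\|_{\mathbb{H}_r}$ is uniformly bounded and the second moment is trivially finite whenever $\eta$ is supported there.

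\textbf{Main obstacle.} For a genuinely Gaussian $\eta$ on $\theta$, the support is not contained in $\Sigma^\alpha_{m,D_0}$. In that case I would write $\sigma=e^\theta$ and track the constant: the ellipticity constant is $e^{-\|\theta\|_\infty}$, so the bound becomes $C e^{c\|\theta\|_\infty}\|\theta\|_\infty$, possibly with a polynomial factor in $\|\theta\|_{C^1}$. Squaring and integrating, Fernique's theorem gives $\int e^{2c\|\theta\|_\infty}\,\eta(d\theta)<\infty$ for Gaussian measures, provided $\eta$ is supported in a separable subspace of $C(\bar D_0)$ (or $C^1$). That integrability is precisely what the hypothesis ``bounded second moment'' must be strengthened to cover beyond the Gaussian case, and it is the step I expect to be most delicate.
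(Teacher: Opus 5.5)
The paper gives no argument of its own for this lemma. It is quoted from \cite[section 6.3]{castro24}, so there is no internal proof to compare yours with. Your Steps 1--3 are the natural argument: Borel measurability of $G$, the change of variables $\int\|T\|_{\mathbb{H}_r}^2\,(G_\sharp\eta)(dT)=\int\|\tilde\Lambda_\sigma\|_{\mathbb{H}_r}^2\,\eta(d\sigma)$, and a bound on $\|\tilde\Lambda_\sigma\|_{\mathbb{H}_r}$ along the lines of \cite[Lemma 18]{NiAb19}. They are correct when $\eta$ is concentrated on $\Sigma^\alpha_{m,D_0}$. To make your factor $\|\sigma-1\|_{L^\infty(D_0)}$ uniformly bounded there, you implicitly need $\alpha>d/2$. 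In this case $G_\sharp\eta$ is supported in a bounded ball of $\mathbb{H}_r$, so the second-moment hypothesis on $\eta$ plays no role at all. You are also right that a nondegenerate Gaussian cannot be supported on $\Sigma^\alpha_{m,D_0}$. The parenthetical in the statement therefore has to be read through a parametrization such as $\sigma=e^\theta$.

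Your ``main obstacle'', however, comes only from the particular estimate you chose. The conclusion you draw from it is wrong: the moment hypothesis does not need to be strengthened to exponential integrability beyond the Gaussian case. Because $\sigma\equiv1$ on $D\setminus D_0$, the Dirichlet principle gives a bound that does not involve the ellipticity constants at all.

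Test the energy $\int_D\sigma|\nabla u|^2$ with the competitor $\chi u_1^f$, where $\chi$ is a cutoff equal to $1$ near $\partial D$ and vanishing near $\overline{D_0}$. This gives $\int_{D\setminus D_0}|\nabla u_\sigma^f|^2\le\int_D\sigma|\nabla u_\sigma^f|^2\le C\|f\|^2_{H^{1/2}(\partial D)}$, with $C$ independent of $\sigma$.

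Then $w=u^f_\sigma-u^f_1$ is harmonic in a collar $U$ of $\partial D$, vanishes on $\partial D$, and satisfies $\|w\|_{H^1(U)}\le C\|f\|_{H^{1/2}}$. Boundary regularity then gives $\|\tilde\Lambda_\sigma f\|_{H^t(\partial D)}\le C_t\|f\|_{H^{1/2}(\partial D)}$ for every $t$, uniformly in $\sigma$.

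Using the $L^2(\partial D)$-symmetry of $\tilde\Lambda_\sigma$, you get $|\langle\tilde\Lambda_\sigma\phi_j,\phi_k\rangle|\le C_t\min\{\|\phi_j\|_{H^{1/2}}\|\phi_k\|_{H^{-t}},\,\|\phi_k\|_{H^{1/2}}\|\phi_j\|_{H^{-t}}\}$. Bounding the minimum by the geometric mean and using Weyl's law, this is square-summable over $(j,k)$ for $t>d-\tfrac12$.

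Hence $\sup_\sigma\|\tilde\Lambda_\sigma\|_{\mathbb{H}_r}<\infty$ over all positive $L^\infty$ conductivities equal to $1$ off $D_0$, with no upper or lower bounds entering. So $G_\sharp\eta$ has moments of all orders for every probability measure $\eta$ on such conductivities, log-Gaussian or not. Your Fernique argument is valid for log-Gaussian fields with continuous paths vanishing outside $D_0$, but it is unnecessary, and no strengthening of the hypothesis is needed.
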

Note that the lemma \ref{thm:finite_moment} requires the conductivities to be drawn from a probability measure with bounded second moments. Examples of such probability measures include Gaussian measures which we have used in this project as well as, say, a probability measure over the space of bounded conductivities. This requirement of finite second moments is thus very natural for such inverse problems. As a simple consequence of the lemma above, we have the following theorem.
\begin{theorem} \label{thm:finite_covar}
    The covariance operator associated with the law of the random variables $\tilde{\Lambda}_{\sigma}$ is trace-class.
\end{theorem}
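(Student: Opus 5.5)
The plan is to use the standard fact that a Borel probability measure on a separable Hilbert space with finite second moment has a trace-class covariance operator. We then apply it in the Hilbert space $\mathbb{H}_r$ of Hilbert--Schmidt operators, where Lemma \ref{thm:finite_moment} supplies the finite second moment.

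\emph{Step 1: the mean.} Let $\nu := G_\sharp \eta$ be the law of $\tilde{\Lambda}_\sigma$ on $\mathbb{H}_r$. By Lemma \ref{thm:finite_moment}, $\int_{\mathbb{H}_r}\|T\|_{\mathbb{H}_r}^2\,\nu(dT)<\infty$. By Cauchy--Schwarz this also gives $\int \|T\|_{\mathbb{H}_r}\,\nu(dT)<\infty$. Since $\mathbb{H}_r$ is separable (it has the countable orthonormal basis $\{b^{(r)}_{jk}\}$), the mean $m:=\int T\,\nu(dT)$ exists as a Bochner integral.

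\emph{Step 2: the covariance and its trace.} Define the covariance operator $\mathcal{C}:\mathbb{H}_r\to\mathbb{H}_r$ by
\[
\langle \mathcal{C}S_1,S_2\rangle_{\mathbb{H}_r}=\int \langle T-m,S_1\rangle_{\mathbb{H}_r}\langle T-m,S_2\rangle_{\mathbb{H}_r}\,\nu(dT).
\]
It is bounded, because $|\langle \mathcal{C}S_1,S_2\rangle|\le \|S_1\|\|S_2\|\int\|T-m\|^2\,d\nu$. It is also self-adjoint and nonnegative. For a nonnegative operator, being trace class amounts to $\sum_{j,k}\langle \mathcal{C}b^{(r)}_{jk},b^{(r)}_{jk}\rangle<\infty$. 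By Tonelli (all terms are nonnegative) and Parseval in $\mathbb{H}_r$,
\[
\sum_{j,k}\langle \mathcal{C}b^{(r)}_{jk},b^{(r)}_{jk}\rangle=\int \sum_{j,k}\langle T-m,b^{(r)}_{jk}\rangle^2\,\nu(dT)=\int\|T-m\|_{\mathbb{H}_r}^2\,\nu(dT).
\]
The last integral is at most $2\int\|T\|^2\,d\nu+2\|m\|^2<\infty$. Hence $\mathcal{C}$ is trace class, with $\operatorname{tr}\mathcal{C}=\mathbb{E}\|\tilde{\Lambda}_\sigma-m\|^2_{\mathbb{H}_r}$.

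\emph{Main obstacle.} The delicate step is measure-theoretic. Lemma \ref{thm:finite_moment} must be read so that $G$ is Borel measurable into $\mathbb{H}_r$ and the second moment is taken in the $\mathbb{H}_r$ norm, not in some weaker operator norm; the paper's citation of \cite[Lemma 18]{NiAb19}, which places $\tilde\Lambda_\sigma$ in $\mathbb{H}_r$, is what this relies on. If measurability needs justifying, I would note that each coordinate map $\sigma\mapsto\langle\tilde\Lambda_\sigma\phi^{(r)}_j,\phi^{(0)}_k\rangle$ is continuous in $\sigma$. Borel measurability into the separable Hilbert space $\mathbb{H}_r$ then follows by Pettis' theorem. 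Everything after that is the routine Tonelli–Parseval computation above.
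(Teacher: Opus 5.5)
Your proposal is correct and follows the paper's proof essentially step for step. The mean is obtained as a Bochner integral via Cauchy--Schwarz and Lemma~\ref{thm:finite_moment}; the covariance is defined through the bounded symmetric bilinear form; and the trace is computed by monotone convergence (Tonelli) and Parseval, giving $\operatorname{tr}\mathcal{C}=\int\|\tilde{\Lambda}_\sigma-\Lambar\|_{\mathbb{H}_r}^2<\infty$. The only differences are cosmetic: you integrate against the pushforward law on $\mathbb{H}_r$ rather than against $\eta$, and you correctly call the form nonnegative where the paper says ``coercive'', which the Riesz representation step does not need.
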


\begin{remark}We mention that the statement of the theorem \ref{thm:finite_covar} already appears as a comment in \cite{castro24} in a slightly different setting. For completeness' sake, we give a self-contained proof of the above statement. 
\end{remark}
\begin{proof}
In order to do the proof, we recall the abstract set-up from \cite[section 2.6]{gine_nickl}. Clearly, $\mathbb{H}_r$ is a seperable Hilbert space and the map $G:\Gamma \to \mathbb{H}_r$ given by $G(\sigma)=\tilde{\Lambda}_\sigma$ is measurable. Thus it follows from \cite[Lemma 2.6.3]{gine_nickl} that $\bar{\Lambda}=E(\tilde{\Lambda}_\sigma):=\int_{\mathbb{H}_r}{\tilde{\Lambda}_{\sigma}}d(\eta \circ G^{-1})=\int_\Gamma \tilde{\Lambda}_{\sigma}\,d\eta(\sigma)$ exists as a Bochner integral, since \begin{align}
     \int_\Gamma \norm{\tilde{\Lambda}_\sigma}_{\mathbb{H}_r}d\eta(\sigma)\leq \left(\int_\Gamma\norm{\tilde{\Lambda}_\sigma}_{\mathbb{H}_r}^2 d\eta(\sigma)\right)^{1/2}< \infty.
\end{align}
where the second integral is finite by Lemma \ref{thm:finite_moment}. Now let, $\Xig := \Gg - \Lambar \in \Hr$.
For $h, k \in \Hr$ define the bilinear form
\begin{align}
    B(h,k)
    := \int_{\Hr}\ip{\Xig}{h}_{\Hr}
      \ip{\Xig}{k}_{\Hr} d(\eta\circ G^{-1})=
    \int_\Gamma
      \ip{\Xig}{h}_{\Hr}
      \ip{\Xig}{k}_{\Hr}
    d\eta(\sigma).
\end{align}
Again by the Cauchy-Schwarz inequality,
$|\ip{\Xig}{h}_{\Hr}\ip{\Xig}{k}_{\Hr}|
\leq \norm{\Xig}_{\Hr}^2\norm{h}_{\Hr}\norm{k}_{\Hr}$. Since,
\begin{align}\label{eq:Xi-bound}
    \int_\Gamma \norm{\Xig}_{\Hr}^2\,d\eta(\sigma)
    \leq
    2\int_\Gamma \norm{\Gg}_{\Hr}^2 d\eta(\sigma)
    + 2\norm{\Lambar}_{\Hr}^2
    < \infty,
\end{align}
the bilinear form $B(\cdot,\cdot)$ is well defined and bounded. In particular,
\begin{align}
|B(h,k)|\leq\left(\int_\Gamma\norm{\Xig}_{\Hr}^2\,d\eta(\sigma)\right) \norm{h}_{\Hr}\norm{k}_{\Hr}.
\end{align} It is also easy to see that $B$ is self-adjoint (in fact, symmetric) and coercive. Then, as a consequence of the Reisz Representation Theorem (see e.g., \cite[Theorem 4.3.6]{debnath_book}), there exists a unique bounded
linear covariance operator $\mathcal{C} : \Hr \to \Hr$ satisfying 
 \begin{equation}\label{eq:cov-def}
              \ip{\mathcal{C}h}{k}_{\Hr}
              =\int_\Gamma\ip{\Gg - \Lambar}{h}_{\Hr}  \ip{\Gg -\Lambar}{k}_{\Hr} d\eta(\sigma),\qquad h, k \in\Hr.
          \end{equation}
Finally, we show that the covariance operator is trace-class. For this, let  $(\psi_j)_{j \geq 1}$ be any orthonormal basis of $\Hr$. Then, \begin{align}
    \operatorname{tr}(\mathcal{C})=\sum_{j=1}^{\infty}\ip{\mathcal{C}\psi_j}{\psi_j}_{\Hr} =\sum_{j=1}^{\infty} \int_\Gamma \ip{\Xig}{\psi_j}_{\Hr}^2 d\eta(\sigma).
\end{align}
Since, every term in the sum is nonnegative, so by the monotone convergence theorem we can interchange the order of the sum and the integral and write:
\begin{align}
     \operatorname{tr}(\mathcal{C})=\int_\Gamma \sum_{j=1}^{\infty} \ip{\Xig}{\psi_j}_{\Hr}^2 d\eta(\sigma)= \int_\Gamma\norm{\Xig}_{\Hr}^2\,d\eta(\sigma)< \infty.
\end{align} 
\end{proof}
As a consequence of the Lemma~\ref{thm:finite_moment} and 
Theorem~\ref{thm:finite_covar}, $\tilde{\Lambda}_\sigma$ admits a Karhunen-Lo\`eve (K-L) expansion
\begin{align}\label{KL_prior}
\tilde{\Lambda}_\sigma = \Lambar + \sum_{j=1}^\infty \alpha_j \psi_j,
\end{align}
where $(\psi_j)$ are eigenfunctions of $\mathcal{C}$ and $\alpha_j$ are uncorrelated coefficients. This gives us a justification for the following construction, which can be surmised as a Gaussian approximation to the above mentioned pushforward prior, see \cite[section 4.1]{Calvetti_2019}. \\
Let us sample conductivities $\sigma^{(k)}$ from a smoothness prior (such as one with some Mat\'ern kernel) and for each sample compute the corresponding shifted DtN operator. Let us represent the corresponding matrix using a Fourier basis, $\{\phi_j\}_{j=1}^{\infty}$ of $H^r(\Omega)$. Then, let us define the matrix form of the Dirichlet-to-Neumann map as the matrix with entries
\begin{equation}
(L_{\sigma})_{jk}
=
\int_{\partial \Omega} \phi_j \, (\tilde\Lambda_{\sigma} \phi_k)  dS
=
\langle \phi_j, \tilde\Lambda_{\sigma} \phi_k \rangle,
\quad 0 \le j, k < \infty.
\end{equation} In practice, we keep a truncated representation of such a matrix with $j,k\leq K\in \mathbb{N}$. Let us denote $d=(K)^2$. Each matrix is then vectorized to obtain vectors $w^{(k)} \in \mathbb{R}^{d}$.

Let $\bar w$ denote the empirical mean
$\bar w = \frac{1}{N}\sum_{k=1}^N w^{(k)}.
$
We then consider the centered samples
$\tilde w^{(k)} = w^{(k)} - \bar w.
$ Now suppose we draw $N$ samples and assemble the data matrix
\[
X =
\begin{bmatrix}
\tilde w^{(1)} & \tilde w^{(2)} & \cdots & \tilde w^{(N)}
\end{bmatrix}
\in \mathbb{R}^{d \times N},
\]
and compute its singular value decomposition
$X = U \Sigma V^{T}.
$
Note that the associated empirical covariance matrix is given by
\[
\widehat{C} = \frac{1}{N-1}\sum_{k=1}^N \tilde w^{(k)} (\tilde{w}^{(k)})^T
= \frac{1}{N-1}XX^T
= \frac{1}{N-1}U\Sigma \Sigma^T U^T.
\]

Now we can use the decay of the singular values in $\Sigma$ to construct a prior for the shifted DtN matrices in a lower dimensional subspace. The leading columns of $U$ provide a data-driven basis for this subspace. After we have the $R$ leading left singular vectors
\[
U = [ U_{1}, U_{2}, \dots, U_{R} ] \in \mathbb{R}^{d \times R}
\]
with associated variances
$\lambda_{j} = \frac{\Sigma_{jj}^{2}}{N-1},\, j = 1, \dots, R,$
and 
$\Lambda_{R} = \mathrm{diag}(\lambda_{1}, \dots, \lambda_{R}),
$
similar to \cite[Eq.~28]{Calvetti_2019}, we parameterize the random samples of the shifted DtN matrix (in a vectorized form) drawn from the (approximate) pushforward measure as described above, by:
\begin{align*}
w(\alpha) = &\bar w + \sum\limits_{j=1}^{R} U_{j}\alpha_j = \bar w + U\alpha,
\qquad
\alpha \sim {N}_R(0, \Lambda_{R})\\
&= \bar w + \sum_{j=1}^R \sqrt{\lambda_j}\xi_j U_j,
\qquad
\xi_j \sim {N}(0,1).
\end{align*}
In this way we construct a low-dimensional Gaussian approximation of the pushforward measure on the space of shifted DtN operators supported on the subspace spanned by the leading singular vectors. This will serve as our prior on the space of DtN operators.

\subsubsection{Posterior Evaluation}
Substituting this representation into the observation model yields
\[
Y(T) = \langle \bar w + \sum_{j=1}^R \sqrt{\lambda_j}\xi_j U_j, T \rangle_{\mathbb{H}}
+ \varepsilon W(T).
\]
Choosing ($T = U_j$), and using orthonormality of the vectors {$U_j$}, we obtain the following analogue of a Gaussian sequence space model (see also \cite[section 1]{NiAb19})
\begin{align}
Y_k := Y(U_k) = m_k + \sqrt{\lambda_k} \xi_k + \varepsilon Z_k,
\qquad k = 1, \dots, R
\end{align}
where
$
m_k = \langle \bar w, U_k \rangle_{\mathbb{H}_r},
$
and $Z_k := W(U_k)$. Since $W$ is white noise, it follows that $Z_k {\sim} {N}(0,1).$ From this it easily follows that the posterior distribution of each coefficient ($\xi_k$) is also a Gaussian distribution given by,
$\xi_k \mid Y_k \sim {N}(\mu_k, \sigma_k^2),$
with
\[
\sigma_k^2 = \left(1 + \frac{\lambda_k}{\varepsilon^2}\right)^{-1},
\quad
\mu_k = \sigma_k^2 \cdot \frac{\sqrt{\lambda_k}}{\varepsilon^2}(Y_k - m_k).
\]
Equivalently, we can write the posterior distribution for $\alpha_k = \sqrt{\lambda_k}\xi_k$ as:
\[
\alpha_k \mid Y_k \sim \mathcal{N}\left(
\frac{\lambda_k}{\lambda_k + \varepsilon^2}(Y_k - m_k),
\frac{\lambda_k \varepsilon^2}{\lambda_k + \varepsilon^2}
\right).
\] Finally, the posterior distribution of the shifted DtN operator is given by
\begin{equation}\label{formula:postDtN}
\widetilde{\Lambda}_\sigma| Y =
\bar w + \sum_{j=1}^R \alpha_j U_j,
\qquad
\alpha_j \mid Y \sim {N}\left(
\frac{\lambda_j}{\lambda_j + \varepsilon^2}(Y_j - m_j),
\frac{\lambda_j \varepsilon^2}{\lambda_j + \varepsilon^2}
\right),
\end{equation}

The next step is to use a deterministic solver to obtain a push-forward probability measure for the space of conductivities. In absence of analytical reconstruction formulas, in the literature iterative schemes \cite{baku93,hohage_97,kaltenbacher_08} as well as direct reconstruction schemes \cite{Abhi_20,arridge_12,mueller_03} are both popular. 


The map $\mathcal{R} : \mathbb{H}_r \to \Sigma^\alpha_{m,D_0}$
denotes a deterministic EIT reconstruction operator acting on individual
Dirichlet-to-Neumann operators. 
For a probability measure $\mu$ on $\mathbb{H}_r$, the pushforward
$\mathcal{R}_\# \mu$ denotes the induced probability measure on
$\Sigma^\alpha_{m,D_0}$.

The two-stage inference procedure for EIT is summarized in the 
figure \ref{fig:2} below. The Bayesian update is performed 
analytically at the level of the KL coefficients $\{\alpha_j\}$, where the posterior is available in closed form under a Gaussian prior as computed above. This corresponds to getting a posterior on the 
shifted DtN operator $\tilde{\Lambda}_\sigma$ via the KL expansion. In the final stage, the posterior on 
$\tilde{\Lambda}_\sigma$ is pushed forward to the conductivity 
space via the deterministic reconstruction operator 
$\mathcal{R} : \mathbb{H}_r \to \Sigma^\alpha_{m,D_0}$. As in the QPAT case, the dashed arrows indicate that the 
posteriors on $\tilde{\Lambda}_\sigma$ and $\sigma$ are 
\emph{induced} by the pushforward rather than obtained by direct 
Bayesian updates at those levels.
\begin{figure}[ht]
\centering
\[
\begin{tikzcd}[row sep=5em, column sep=9em]
\Pi_{\tilde{\Lambda}_\sigma} 
\arrow[r, "\text{Bayes' theorem}"{above}]
\arrow[d, "\mathcal{R}_\#"', "\text{pushforward}"{right}]
&
\Pi_{\tilde{\Lambda}_{\sigma}}(\cdot \mid Y)
\arrow[d, "\mathcal{R}_\#", "\text{pushforward}"{left}]
\\
\Pi_\sigma := \mathcal{R}_\# \Pi_{\tilde{\Lambda}_\sigma}
\arrow[r, dashed, "\text{induced}"']
&
\Pi_\sigma(\cdot \mid Y) := \mathcal{R}_\# \Pi_{\tilde{\Lambda}_\sigma}(\cdot \mid Y)
\end{tikzcd}
\]
\caption{Two-step Bayesian procedure: inference is first performed on the auxiliary variable $\tilde{\Lambda}_{\sigma}$, and the posterior on $\sigma$ is then induced by pushforward through $\mathcal{R}$. }
\label{fig:2}
\end{figure}

\begin{theorem}[Posterior contraction for EIT] \label{thm: post_cont} Consider the space of Hilbert-Schmidt operators, $\mathbb{H}_r, r\in \{0,1\}$ considered above and let $\tilde{\Lambda}_0:=\tilde{\Lambda}_{\sigma_0}\in \mathbb{H}_r.$ Let
$P_R:\mathbb{H}_r\to \operatorname{span}\{\psi_1,\dots,\psi_R\}
$
denote the orthogonal projection, and let us write
$
\tilde \Lambda_0-\bar{\Lambda}
=
\sum_{j=1}^r \alpha_{0j}\psi_j+E_R$,
where 
$E_R:=(I-P_r)(\tilde\Lambda_0-\bar{\Lambda}).
$ Let us assume that $\|E_R\|_{{\mathbb H}_r}\leq b_R$ such that $b_R\to 0.$ For the noisy observation model given by \eqref{eq: EIT_model}, we denote: For $j=1,\dots,R$, define
\[
Y_j:=\langle Y-\bar{\Lambda},\psi_j\rangle_{{\mathbb H}_r}.
\] In particular, this implies that $Y_j=\alpha_{0j}+\epsilon Z_j,
\
Z_j\sim N(0,1).$
Recall that inspired by the KL expansion given by \eqref{KL_prior}, we defined a Gaussian truncated prior of the form:
$
\tilde\Lambda
=
w+\sum_{j=1}^R \alpha_j\psi_j,
\
\alpha_j\sim N(0,\lambda_j),
$
with independent coefficients and $\lambda_j>0$ which are eigenvalues for the covariance operator with $\psi_j$ as the corresponding eigenvectors. The posterior on the coefficients, $\alpha_j$ was computed and satisfies $\alpha_j\mid Y_j\sim N(\mu_j,\tau_j^2)$, where $\mu_j=\frac{\lambda_j}{\lambda_j+\epsilon^2}Y_j,
\
\tau_j^2=\frac{\lambda_j\epsilon^2}{\lambda_j+\epsilon^2}.$ Let us assume, \begin{enumerate}
    \item[\textnormal{(A1)}] $\sum_{j=1}^R
\left(
\frac{\epsilon^2}{\lambda_j+\epsilon^2}
\right)^2
\alpha_{0j}^2
\leq C R\epsilon^2$ \text{ for some constant $C$ }
\item[\textnormal{(A2)}] there exist measurable sets $A_\epsilon\subset {\mathbb H}_r$ such that
$
\Pi_{\widetilde\Lambda}(A_\epsilon\mid Y)\to 1, 
$
\item[\textnormal{(A3)}] Reconstruction map $
\mathcal{R}:\widetilde\Lambda\mapsto \sigma
$
satisfies for all $\tilde\Lambda\in A_\epsilon$,
$
\| e(\widetilde\Lambda)-\sigma_0\|_{L^2(D)}
\leq
\omega\left(
\|\tilde\Lambda-\tilde\Lambda_0\|_{{\mathbb H}_r}
\right),
$
for some modulus of continuity $\omega$.
\end{enumerate}
Furthermore, define $\delta_{\epsilon,R}:=\epsilon\sqrt R+b_R.$ Then, for every $M_\epsilon\to\infty$,
\[
\Pi_\sigma
\left(
\|\sigma-\sigma_0\|_{L^2(D)}
>
\omega(M_\epsilon\delta_{\epsilon,R})
\mid Y
\right)
\overset{P_{\sigma_0}}{\rightarrow} 0
\]
\end{theorem}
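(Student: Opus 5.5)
The argument has two stages: first a contraction rate for the posterior on $\tilde\Lambda$ in $\mathbb{H}_r$, then a transfer to $\sigma$ by the same set-splitting used for the QPAT contraction proposition, with the Lipschitz bound replaced by the modulus $\omega$. We assume, as is implicit in the statement, that the prior mean $w$ coincides with $\bar\Lambda$ and that $\omega$ is nondecreasing.

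\textbf{Stage one: contraction in $\mathbb{H}_r$.} For $\tilde\Lambda=\bar\Lambda+\sum_{j\le R}\alpha_j\psi_j$ distributed according to the posterior, orthogonality of the $\psi_j$ gives
\[
\|\tilde\Lambda-\tilde\Lambda_0\|_{\mathbb{H}_r}^2=\sum_{j=1}^R(\alpha_j-\alpha_{0j})^2+\|E_R\|_{\mathbb{H}_r}^2 .
\]
Hence it suffices to show that $\sum_{j\le R}(\alpha_j-\alpha_{0j})^2=O_P(R\epsilon^2)$ under the posterior, in $P_{\sigma_0}$-probability. I would bound its posterior expectation by the bias--variance identity
\[
\sum_j(\mu_j-\alpha_{0j})^2+\sum_j\tau_j^2 .
\]

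\emph{Variance term.} We have $\tau_j^2\le\epsilon^2$, so $\sum_j\tau_j^2\le R\epsilon^2$ deterministically.

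\emph{Bias term.} Write
\[
\mu_j-\alpha_{0j}=-\frac{\epsilon^2}{\lambda_j+\epsilon^2}\alpha_{0j}+\frac{\lambda_j}{\lambda_j+\epsilon^2}\epsilon Z_j .
\]
This gives $\sum_j(\mu_j-\alpha_{0j})^2\le 2CR\epsilon^2+2\epsilon^2\sum_{j\le R}Z_j^2$, using (A1) for the first piece. Its $E_{\sigma_0}$-expectation is at most $(2C+2)R\epsilon^2$.

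\emph{Conclusion of stage one.} By Markov's inequality applied first under the posterior and then under $P_{\sigma_0}$,
\[
\Pi\bigl(\textstyle\sum_j(\alpha_j-\alpha_{0j})^2>M_\epsilon^2R\epsilon^2/4\mid Y\bigr)\to 0
\]
in probability, for any $M_\epsilon\to\infty$. Since $\|E_R\|\le b_R$, on the complement we get $\|\tilde\Lambda-\tilde\Lambda_0\|_{\mathbb{H}_r}\le M_\epsilon\epsilon\sqrt R/2+b_R\le M_\epsilon\delta_{\epsilon,R}$ once $M_\epsilon\ge 2$. Therefore
\[
\Pi_{\tilde\Lambda}\bigl(\|\tilde\Lambda-\tilde\Lambda_0\|_{\mathbb{H}_r}>M_\epsilon\delta_{\epsilon,R}\mid Y\bigr)\to 0 .
\]

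\textbf{Stage two: transfer to $\sigma$.} By definition $\Pi_\sigma(\cdot\mid Y)=\mathcal R_\#\Pi_{\tilde\Lambda}(\cdot\mid Y)$. Let $B_\epsilon:=\{\sigma:\|\sigma-\sigma_0\|_{L^2}>\omega(M_\epsilon\delta_{\epsilon,R})\}$. Then
\[
\Pi_\sigma(B_\epsilon\mid Y)\le\Pi_{\tilde\Lambda}(A_\epsilon^c\mid Y)+\Pi_{\tilde\Lambda}(\mathcal R^{-1}B_\epsilon\cap A_\epsilon\mid Y).
\]
On $A_\epsilon$, assumption (A3) together with monotonicity of $\omega$ shows that $\mathcal R(\tilde\Lambda)\in B_\epsilon$ forces $\|\tilde\Lambda-\tilde\Lambda_0\|_{\mathbb{H}_r}>M_\epsilon\delta_{\epsilon,R}$. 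So the second term is controlled by stage one, and the first term tends to zero by (A2).

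\textbf{Main obstacle.} The delicate step is the bookkeeping in stage one: identifying the data coefficients $Y_j=\alpha_{0j}+\epsilon Z_j$ with the prior coordinates. This requires the prior mean $w$ to be $\bar\Lambda$ and uses $W(\psi_j)$ i.i.d.\ $N(0,1)$ by orthonormality. It also requires applying Markov's inequality uniformly in $R=R_\epsilon$, which is legitimate because every bound above is explicit in $R$.
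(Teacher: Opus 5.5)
Your proposal is correct and follows essentially the same route as the paper: Pythagoras in $\mathbb{H}_r$, the posterior-variance bound $\sum_j\tau_j^2\le R\epsilon^2$, a bias bound from (A1) and the $\chi^2_R$ term, Markov's inequality, and then the $A_\epsilon$/$A_\epsilon^c$ split with (A2)--(A3) to transfer to $\sigma$. The differences are cosmetic. You use the exact bias--variance identity with Markov applied first under the posterior and then under $P_{\sigma_0}$, where the paper takes a two-event union bound with an indicator. You also keep $\delta_{\epsilon,R}=\epsilon\sqrt R+b_R$ as stated, whereas the paper quietly enlarges it to $2\epsilon\sqrt R+2b_R$ inside the proof.
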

\begin{proof}
 We first prove posterior contraction for the shifted DtN operator, $\tilde\Lambda$ in $\mathbb H_r$. By the Pythagoras theorem, it easily follows that:
 $\|\tilde\Lambda-\tilde\Lambda_0\|_{\mathbb H}^2
=
\sum_{j=1}^R(\alpha_j-\alpha_{0j})^2
+
\|E_R\|_{\mathbb H}^2.
$
By assumption, $\|E_R\|_{\mathbb H}\leq b_R.$ So, let us estimate, $\sum_{j=1}^r(\alpha_j-\alpha_{0j})^2.$ Note that,
\begin{equation}\label{eq:3.25}\sum_{j=1}^R(\alpha_j-\alpha_{0j})^2
\leq
2\sum_{j=1}^R(\alpha_j-\mu_j)^2
+
2\sum_{j=1}^R(\mu_j-\alpha_{0j})^2.
\end{equation} Recall, $\alpha_j-\mu_j|Y\sim N(0,\tau_j^2),\,\tau_j^2
= \frac{\lambda_j\epsilon^2}{\lambda_j+\epsilon^2}
\leq
\epsilon^2.$ This, $\mathbb{E}\left[
\sum_{j=1}^R(\alpha_j-\mu_j)^2
| Y\right]=\sum_{j=1}^R\tau_j^2
\leq R\epsilon^2.$ Then it follows from Markov's inequality that for every $M_\epsilon \to \infty$,
\begin{equation}\label{eq:3.24}
\Pi_{\alpha}
\left(
\sum_{j=1}^R(\alpha_j-\mu_j)^2
> M^2_\epsilon R\epsilon^2|Y
\right)
\leq
\frac{1}{M^2_\epsilon}\to 0.
\end{equation}


\noindent Now we  estimate the posterior mean error. By substituting, $Y_j=\alpha_{0j}+\epsilon Z_j,
\
Z_j\sim N(0,1),$ in the expression for $\mu_j$ above we get:


\[\sum_{j=1}^R(\mu_j-\alpha_{0j})^2
\leq 2\sum_{j=1}^R
\left(\frac{\epsilon^2}{\lambda_j+\epsilon^2}
\right)^2
\alpha_{0j}^2+2\epsilon^2\sum_{j=1}^R Z_j^2.
\] From Assumption (A1) in the theorem \ref{thm: post_cont} above and the fact that $\sum_{j=1}^R Z_j^2\sim \chi_R^2$ we get 
for every sequence \(M_\varepsilon\to\infty\),
\begin{equation}
\label{eq:posterior_mean_error}
P_{\tilde{\Lambda}_0}
\big(
\sum_{j=1}^R(\mu_j-\alpha_{0j})^2 >M_\varepsilon^2R\varepsilon^2
\big)
\to 0.
\end{equation}
Now it follows from \eqref{eq:3.25} that
\[
\{
\sum_{j=1}^R(\alpha_j-\tilde{\Lambda}_{0j})^2
>
4M_\varepsilon^2R\varepsilon^2
\}
\subseteq
\{
\sum_{j=1}^R(\alpha_j-\mu_j)^2 >M_\varepsilon^2R\varepsilon^2
\}
\cup
\{
\sum_{j=1}^R(\mu_j-\alpha_{0j})^2>M_\varepsilon^2R\varepsilon^2
\}.
\]
Conditioning on \(Y\), and noting that
\(\sum_{j=1}^R(\mu_j-\alpha_{0j})^2\) is fixed once \(Y\) is fixed, gives
\begin{align}
\Pi_\alpha
(
&\sum_{j=1}^R(\alpha_j-\alpha_{0j})^2
>
4M_\varepsilon^2R\varepsilon^2
\mid
Y
)\nonumber\\ 
&\leq
\Pi_\alpha
(
\sum_{j=1}^R(\alpha_j-\mu_j)^2>M_\varepsilon^2R\varepsilon^2
\mid
Y
)
+
\Pi_\alpha(\sum_{j=1}^R(\mu_j-\alpha_{0j})^2>M_\varepsilon^2R\varepsilon^2|Y)\nonumber\\
&\leq
\Pi_\alpha
(
\sum_{j=1}^R(\alpha_j-\mu_j)^2>M_\varepsilon^2R\varepsilon^2
\mid
Y
)
+
\mathbf 1_{\{\sum_{j=1}^R(\mu_j-\alpha_{0j})^2>M_\varepsilon^2R\varepsilon^2\}}.
\end{align}
Now from \eqref{eq:3.24} and \eqref{eq:posterior_mean_error}, we get \begin{equation}
\label{eq:coefficient_contraction}
\Pi_\alpha
(
\sum_{j=1}^R(\alpha_j-\alpha_{0j})^2
>
4M_\varepsilon^2R\varepsilon^2
\mid
Y
)
\overset{P_{\tilde{\Lambda}_0}}{\longrightarrow} 0
\end{equation}
We now transfer the coefficient contraction rate in
\eqref{eq:coefficient_contraction} to contraction of the shifted DtN
operator. Recall that
$
\tilde{\Lambda}
=
\bar w+\sum_{j=1}^R \alpha_j U_j,
$
whereas the fixed truth satisfies
$
\tilde{\Lambda}_0-\bar w
=
\sum_{j=1}^R \alpha_{0j}U_j+E_R,
\|E_R\|_{\mathbb H_r}\leq b_R .
$
Therefore
$
\tilde{\Lambda}-\tilde{\Lambda}_0
=
\sum_{j=1}^R(\alpha_j-\alpha_{0j})U_j-E_R .
$
By the triangle inequality and orthonormality of the vectors \(U_j\),
$
\|\tilde{\Lambda}-\tilde{\Lambda}_0\|_{\mathbb H_r}
\leq
(
\sum_{j=1}^R(\alpha_j-\alpha_{0j})^2
)^{1/2}
+
b_R .$
Define $\delta_{\varepsilon,R}:=2\varepsilon\sqrt R+ 2b_R.$
For $M_\varepsilon\geq 1$, if
$
\sum_{j=1}^R(\alpha_j-\alpha_{0j})^2
\leq
4 M_\varepsilon^2R\varepsilon^2,
$
then
$
\|\tilde{\Lambda}-\tilde{\Lambda}_0\|_{\mathbb H_r}
\leq
2 M_\varepsilon\varepsilon\sqrt R+ 2b_R
:=
M_\varepsilon\delta_{\varepsilon,R}. 
$
From this we get
$
\Pi_{\tilde{\Lambda}}
\left(
\|\tilde{\Lambda}-\tilde{\Lambda}_0\|_{\mathbb H_r}
>
M_\varepsilon\delta_{\varepsilon,R}
\,\middle|\,
Y
\right) 
\leq
\Pi_\alpha
(
\sum_{j=1}^R(\alpha_j-\alpha_{0j})^2
>
4 M_\varepsilon^2R\varepsilon^2
\mid
Y
).
$
Hence, 
\begin{equation}\label{eq:3.30}\Pi_{\tilde{\Lambda}}
\left(
\|\tilde{\Lambda}-\tilde{\Lambda}_0\|_{\mathbb H_r}
>
M_\varepsilon\delta_{\varepsilon,R}
\,\middle|\,
Y
\right)
\overset{P_{\tilde{\Lambda}_0}}{\rightarrow} 0 \text{ for every sequence}
M_\varepsilon\to\infty.
\end{equation}
Finally, to transfer the contraction from $\tilde{\Lambda}$ to $\sigma$, 
note that by Assumption~(A3), for any $\tilde{\Lambda} \in 
A_\varepsilon$,
\[
\|\tilde{\Lambda} - \tilde{\Lambda}_0\|_{\mathbb{H}_r} 
\leq M_\varepsilon \delta_{\varepsilon,r}
\implies
\|\mathcal{R}(\tilde{\Lambda}) - \sigma_0\|_{L^2(D)} 
\leq \omega(M_\varepsilon \delta_{\varepsilon,r}).
\]
Taking complements within $A_\varepsilon$ gives the set inclusion
\begin{align}\label{eq: set_inclusion_EIT}
\Bigl\{\tilde{\Lambda} \in A_\varepsilon : 
\|\mathcal{R}(\tilde{\Lambda}) - \sigma_0\|_{L^2(D)} > 
\omega(M_\varepsilon \delta_{\varepsilon,r})\Bigr\}
\subseteq
\Bigl\{\tilde{\Lambda} \in A_\varepsilon :
\|\tilde{\Lambda} - \tilde{\Lambda}_0\|_{\mathbb{H}_r} > 
M_\varepsilon \delta_{\varepsilon,r}\Bigr\}.
\end{align}
Let
$B_\varepsilon 
:= \left\{\sigma : \|\sigma - \sigma_0\|_{L^2(D)} > 
\omega(M_\varepsilon \delta_{\varepsilon,r})\right\}.
$
By definition,  
$
\Pi_\sigma(B_\varepsilon \mid Y) 
= \Pi_{\tilde{\Lambda}}\bigl(\mathcal{R}^{-1}(B_\varepsilon) \mid Y\bigr).
$
From \eqref{eq: set_inclusion_EIT},
\[
\mathcal{R}^{-1}(B_\varepsilon) \cap A_\varepsilon
\subseteq
\Bigl\{\tilde{\Lambda} \in A_\varepsilon :
\|\tilde{\Lambda} - \tilde{\Lambda}_0\|_{\mathbb{H}_r} > 
M_\varepsilon \delta_{\varepsilon,r}\Bigr\}.
\]
Splitting $\mathcal{R}^{-1}(B_\varepsilon)$ on $A_\varepsilon$ and 
$A_\varepsilon^c$ and using the above inclusion, we obtain
\begin{align*}
\Pi_\sigma(B_\varepsilon \mid Y)
&= \Pi_{\tilde{\Lambda}}\bigl(\mathcal{R}^{-1}(B_\varepsilon) \cap 
A_\varepsilon \mid Y\bigr) 
+ \Pi_{\tilde{\Lambda}}\bigl(\mathcal{R}^{-1}(B_\varepsilon) \cap 
A_\varepsilon^c \mid Y\bigr)\\
&\leq \Pi_{\tilde{\Lambda}}\!\left(
\tilde{\Lambda} \in A_\varepsilon :
\|\tilde{\Lambda} - \tilde{\Lambda}_0\|_{\mathbb{H}_r} > 
M_\varepsilon\delta_{\varepsilon,r} \mid Y\right)
+ \Pi_{\tilde{\Lambda}}(A_\varepsilon^c \mid Y).
\end{align*}
The first term converges to zero in \(P_{\tilde{\Lambda}_0}\)-probability
by \eqref{eq:3.30}. The second term converges to zero in
\(P_{\tilde{\Lambda}_0}\)-probability by Assumption \((A2)\). Hence
\[
\Pi_\sigma
\left(
\|\sigma-\sigma_0\|_{L^2(D)}
>
\omega(M_\varepsilon\delta_{\varepsilon,R})
\mid Y
\right)\overset{P_{\sigma_0}}{\longrightarrow}
 0.
\]

\end{proof}

\begin{theorem}[Credible region for EIT]\label{thm: credible_region_EIT} Let,  $\widetilde C_\epsilon^{\Lambda_{\sigma}}(Y)\subset \mathbb{H}_r$ be a $(1-\alpha)$ credible region for the posterior distribution of the operator, $\tilde\Lambda_\sigma$, i.e., $
\Pi_{\tilde\Lambda_\sigma}\bigl(\widetilde C_\epsilon^\Lambda(Y)\mid Y\bigr)
= 1-\alpha.$ Consider the pushforward posterior distribution of $\sigma$, $\Pi_{\widetilde\Lambda}(\cdot\mid Y)$, under the reconstruction map $\mathcal{R}:\widetilde\Lambda_\sigma \mapsto \sigma,$ and define
$C_\epsilon^\sigma(Y):=\mathcal{R}\bigl(\widetilde C_\epsilon^\Lambda(Y)\bigr).
$ Suppose that $\mathcal{R}$ satisfies a modulus of continuity estimate, $
|\mathcal{R}(\Lambda_1)-\mathcal{R}(\Lambda_2)|_{L^2(D)}
\leq
\omega\left(
|\Lambda_1-\Lambda_2|_{\mathbb{H}_r}
\right)
$
for all admissible $\Lambda_1,\Lambda_2$, where 
$\omega:[0,\infty)\to[0,\infty)$ is a monotone increasing modulus 
of continuity. Then we have the following:
\begin{enumerate}
    \item[\textnormal{(i)}] \textbf{Exact posterior credibility:}
$\Pi_\sigma\bigl(C_\epsilon^\sigma(Y) \mid Y\bigr) = 1 - \alpha.$
    \item[\textnormal{(ii)}] \textbf{Exact  coverage 
    transfer:} If $\sigma_0$ denotes the true conductivity and $\tilde\Lambda_0=\tilde\Lambda_{\sigma_0}$, then for $\sigma_0 = \mathcal{R}(\widetilde{\Lambda}_0)$, $P\bigl(\sigma_0 \in C_\epsilon^\sigma(Y)\bigr) 
    = P\bigl(\widetilde{\Lambda}_0 \in 
    \widetilde{C}_\epsilon^\Lambda(Y)\bigr).
    $
    \item[\textnormal{(iii)}] \textbf{Diameter control:}
    $\operatorname{diam}_{L^2(D)}\bigl(C_\epsilon^\sigma(Y)\bigr) 
    \leq \omega\!\left(\operatorname{diam}_{\mathbb{H}_r}
\bigl(\widetilde{C}_\epsilon^\Lambda(Y)\bigr)\right).
    $
\end{enumerate}

\end{theorem}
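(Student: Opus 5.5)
The plan mirrors the proof of Theorem \ref{thm: 3.4}, with the Lipschitz constant replaced by the modulus $\omega$ and the map $e$ replaced by $\mathcal{R}$. The key structural input is that $\mathcal{R}$ is one-to-one on the admissible set supporting the posterior $\Pi_{\tilde\Lambda_\sigma}(\cdot\mid Y)$. This holds because $\mathcal{R}$ is a reconstruction map inverting $\sigma\mapsto\tilde\Lambda_\sigma$ on $\Sigma^\alpha_{m,D_0}$, which is injective by uniqueness for the Calder\'on problem. We also need measurability of $\mathcal{R}$ and of $\mathcal{R}^{-1}$ on its range, as in Theorem \ref{thm:1}. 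I would state both as standing hypotheses, since the statement implicitly uses them, so that all the pushforward identities are legitimate.

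\textbf{Step (i).} By definition of the pushforward, $\Pi_\sigma(C^\sigma_\epsilon(Y)\mid Y)=\Pi_{\tilde\Lambda}(\mathcal{R}^{-1}(\mathcal{R}(\widetilde C^\Lambda_\epsilon(Y)))\mid Y)$. Injectivity on the support gives $\mathcal{R}^{-1}(\mathcal{R}(\widetilde C))\cap\operatorname{supp}=\widetilde C\cap\operatorname{supp}$, so the two sets differ only by a posterior-null set. The probability therefore equals $\Pi_{\tilde\Lambda}(\widetilde C^\Lambda_\epsilon(Y)\mid Y)=1-\alpha$. Measurability of $C^\sigma_\epsilon(Y)$ follows because it is the preimage of $\widetilde C$ under the measurable map $\mathcal{R}^{-1}$ on the range.

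\textbf{Step (ii).} I would show the two events coincide pointwise in $Y$. If $\sigma_0\in C^\sigma_\epsilon(Y)$, then $\sigma_0=\mathcal{R}(\Lambda)$ for some $\Lambda\in\widetilde C^\Lambda_\epsilon(Y)$. Since also $\sigma_0=\mathcal{R}(\tilde\Lambda_0)$, injectivity forces $\Lambda=\tilde\Lambda_0$. The converse is immediate. Taking $P=P_{\sigma_0}=P_{\tilde\Lambda_0}$, which is the same data law because $Y$ depends on $\sigma_0$ only through $\tilde\Lambda_0$, gives equality of the coverages.

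\textbf{Step (iii).} For $\sigma_1,\sigma_2\in C^\sigma_\epsilon(Y)$, write $\sigma_i=\mathcal{R}(\Lambda_i)$ with $\Lambda_i\in\widetilde C$. Then $\|\sigma_1-\sigma_2\|_{L^2}\le\omega(\|\Lambda_1-\Lambda_2\|_{\mathbb H_r})\le\omega(\operatorname{diam}\widetilde C)$, using that $\omega$ is monotone. Taking the supremum finishes the argument.

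The only delicate point I anticipate is (i) and (ii), where injectivity of $\mathcal{R}$ and the requirement that $\widetilde C$ lie in the admissible range are needed. If these fail, the preimage is generally larger than $\widetilde C$, and only $\Pi_\sigma(C^\sigma_\epsilon\mid Y)\ge 1-\alpha$ and a one-sided coverage inequality survive. I would flag this explicitly.
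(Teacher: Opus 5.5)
Your argument is the paper's proof. Like the paper, you get (i) and (ii) from the pushforward relation $\Pi_\sigma(\cdot\mid Y)=\Pi_{\tilde\Lambda}(\cdot\mid Y)\circ\mathcal{R}^{-1}$ together with the injectivity already used in Theorem~\ref{thm: 3.4}, and (iii) from the modulus estimate and the monotonicity of $\omega$. You are also more explicit than the paper about the injectivity and measurability hypotheses it uses silently.

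One correction concerns your justification of injectivity. Calder\'on uniqueness does not give injectivity where you need it. It only makes $\mathcal{R}$ injective on the set $\{\tilde\Lambda_\sigma:\sigma\in\Sigma^\alpha_{m,D_0}\}$ of genuine shifted DtN maps, and only if $\mathcal{R}$ inverts exactly there. The Gaussian posterior lives on $\bar w+\operatorname{span}\{U_1,\dots,U_R\}$, whose draws are in general not DtN maps of any admissible conductivity. Part (ii) also needs injectivity at $\tilde\Lambda_0$, which generally lies off that affine space because of the truncation error $E_R$. So injectivity of $\mathcal{R}$ on a set containing the posterior support, $\widetilde C_\epsilon^\Lambda(Y)$ and $\tilde\Lambda_0$ must be an explicit hypothesis, as your closing caveat already suggests.
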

\begin{proof}
    The proof is identical to the proof of Theorem \ref{thm: 3.4} and we skip the details for brevity. We  note that proofs for $(i)$ and $(ii)$ follow simply from the credibility and coverage identities follow from the pushforward relation
$\Pi_\sigma(\cdot\mid Y)
=\Pi_{\widetilde\Lambda}(\cdot\mid Y)\circ \mathcal{R}^{-1}.
$ Finally, for  $(iii)$, let $\sigma_i=\mathcal{R}(\Lambda_i)$ with $\Lambda_i\in \widetilde C_\epsilon^\Lambda(Y)$ for $i=1,2$. Then
$|\sigma_1-\sigma_2|_X
=|\mathcal{R}(\Lambda_1)-\mathcal{R}(\Lambda_2)|_X
\le
\omega\left(
|\Lambda_1-\Lambda_2|_{\mathbb{H}_r}
\right).
$
Taking supremum over  $\Lambda_i\in \widetilde C_\epsilon^\Lambda(Y)$ and the fact that $\omega$ is monotone increasing gives the result.
\end{proof}

\begin{corollary}
\label{cor:credible_eit}
In the setting of Theorem~\ref{thm: credible_region_EIT}, suppose 
that $\mathcal{R}$ satisfies the logarithmic stability estimate of the 
Calder\'{o}n problem \cite[Lemma 7]{NiAb19}: there exist constants 
$C, \delta > 0$ such that for all admissible $\Lambda_1, \Lambda_2$ 
with $\|\Lambda_1 - \Lambda_2\|_{\mathbb{H}_r}$ sufficiently small,
$
\|\mathcal{R}(\Lambda_1) - \mathcal{R}(\Lambda_2)\|_{L^\infty(D)} 
\leq C\left|\log\|\Lambda_1 - \Lambda_2\|_{\mathbb{H}_r}
\right|^{-\delta}.
$
Assume further that 
$\operatorname{diam}_{\mathbb{H}_r}(\widetilde{C}_\epsilon^\Lambda(Y)) < 1$.
Then, from 
Theorem~\ref{thm: credible_region_EIT} we get,
\[
\operatorname{diam}_{L^2(D)}\bigl(C_\epsilon^\sigma(Y)\bigr)
\lesssim
\left|\log\operatorname{diam}_{\mathbb{H}_r}
\bigl(\widetilde{C}_\epsilon^\Lambda(Y)\bigr)\right|^{-\delta},
\]
where $\lesssim$ means inequality holds up to constants.
\end{corollary}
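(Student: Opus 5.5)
The plan is to apply part (iii) of Theorem~\ref{thm: credible_region_EIT} with the modulus of continuity furnished by the logarithmic stability estimate. First I will turn the $L^\infty$ stability bound into an $L^2(D)$ bound. Since $D$ is bounded, $\|f\|_{L^2(D)}\le |D|^{1/2}\|f\|_{L^\infty(D)}$. Hence for admissible $\Lambda_1,\Lambda_2$ with $t:=\|\Lambda_1-\Lambda_2\|_{\mathbb{H}_r}$ small,
\[
\|\mathcal{R}(\Lambda_1)-\mathcal{R}(\Lambda_2)\|_{L^2(D)}\le C|D|^{1/2}\,|\log t|^{-\delta}.
\]

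Next I will build a genuine modulus $\omega:[0,\infty)\to[0,\infty)$ that is monotone increasing, as Theorem~\ref{thm: credible_region_EIT} requires. On $(0,1)$ the function $t\mapsto|\log t|^{-\delta}=(-\log t)^{-\delta}$ is increasing and tends to $0$ as $t\downarrow 0$. So I will set $\omega(0)=0$ and $\omega(t)=C|D|^{1/2}|\log t|^{-\delta}$ for $0<t\le t_0$, where $t_0<1$ is the smallness threshold in the stability estimate. For $t>t_0$ I will extend $\omega$ by a constant, using the uniform bound on admissible conductivities: they lie in $\Sigma^\alpha_{m,D_0}$, so the $L^2$ distance between any two reconstructions is at most some $C'$. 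The extension is then $\omega(t)=\max\{C|D|^{1/2}|\log t_0|^{-\delta},C'\}$ for $t>t_0$, which keeps $\omega$ monotone.

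Applying (iii) then gives $\operatorname{diam}_{L^2(D)}(C^\sigma_\epsilon(Y))\le\omega(\operatorname{diam}_{\mathbb{H}_r}(\widetilde C^\Lambda_\epsilon(Y)))$. When the diameter $d$ of $\widetilde C^\Lambda_\epsilon(Y)$ is at most $t_0$, this is directly $\lesssim|\log d|^{-\delta}$. When $t_0<d<1$, I will use two facts: $|\log d|^{-\delta}\ge|\log t_0|^{-\delta}>0$, and $\omega(d)$ is bounded by a constant. Together these give $\omega(d)\le K|\log d|^{-\delta}$ with $K$ depending only on $t_0,\delta,C,C'$. The assumption $d<1$ is exactly what makes $|\log d|^{-\delta}$ finite and positive.

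The main obstacle I expect is the mismatch between the local stability estimate (valid only for small $t$) and the global modulus hypothesis of Theorem~\ref{thm: credible_region_EIT}. I plan to handle it by the constant extension above. This relies on the a priori uniform $L^2$ bound on $\mathcal{R}(\mathbb{H}_r)\subset\Sigma^\alpha_{m,D_0}$, and absorbing constants into $\lesssim$ finishes the argument.
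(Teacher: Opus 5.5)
Your proposal is correct and follows the same route as the paper. Like the paper, you convert the $L^\infty$ stability bound to $L^2(D)$ via the factor $|D|^{1/2}$, use that $t\mapsto|\log t|^{-\delta}$ is increasing on $(0,1)$, and apply part (iii) of Theorem~\ref{thm: credible_region_EIT}. Your constant extension of $\omega$ beyond the smallness threshold $t_0$, using $\|\sigma\|_{L^2(D)}\le\|\sigma\|_{H^\alpha}\le 1/m$ on $\Sigma^\alpha_{m,D_0}$, makes the argument more rigorous than the paper's: the paper simply takes $\omega(t)=C|D|^{1/2}|\log t|^{-\delta}$ on all of $(0,a]$ with $a=\operatorname{diam}_{\mathbb{H}_r}(\widetilde C^\Lambda_\epsilon(Y))<1$, which tacitly assumes the ``sufficiently small'' stability estimate holds all the way up to $a$.
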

\begin{proof}
 We have $|\log t| > 0$ for all $t \in 
\bigl(0,\, \operatorname{diam}_{\mathbb{H}_r}
(\widetilde{C}_\epsilon^\Lambda(Y))\bigr]$, so the modulus of continuity 
$\omega(t) = C|D|^{1/2}|\log t|^{-\delta}$ is well-defined. 
Moreover, since $|\log t|$ is strictly decreasing on $(0,1)$, the 
function $t \mapsto |\log t|^{-\delta}$ is strictly increasing on 
$(0,1)$, so $\omega$ is monotone increasing. Furthermore, $
\|\mathcal{R}(\Lambda_1) - \mathcal{R}(\Lambda_2)\|_{L^2(D)} 
\leq |D|^{1/2}\|\mathcal{R}(\Lambda_1) - \mathcal{R}(\Lambda_2)\|_{L^\infty(D)}
\leq C|D|^{1/2}\left|\log\|\Lambda_1 - \Lambda_2\|_{\mathbb{H}_r}
\right|^{-\delta}
$. Setting 
$a := \operatorname{diam}_{\mathbb{H}_r}
(\widetilde{C}_\epsilon^\Lambda(Y)) < 1$, the supremum of $\omega(t)$ over 
$t \in (0, a]$ is attained at $t = a$, giving
\[
\sup_{t\,\in\,(0,\,a]}\, C|D|^{1/2}|\log t|^{-\delta} 
= C|D|^{1/2}|\log a|^{-\delta}.
\]
Thus we get the desired bound from applying Theorem \ref{thm: credible_region_EIT}.
\end{proof}

\section{Numerical Reconstruction}\label{sec:numerics}

In this section, we perform the numerical implementation of the proposed framework for reconstructing the parameter of interest (PoI) and quantify its uncertainty. Computationally, the power of the proposed uncertainty quantification framework lies in the fact that it separates Bayesian inference from the deterministic reconstruction method used in any particular experiment.  Bayesian inference is performed in the space of measurements, while the reconstruction algorithm is treated as a deterministic map from the data to the parameter space. Moreover, once the posterior samples of the relevant measurement are drawn, one can apply the deterministic reconstruction method on these samples in parallel to obtain the pushforward posterior samples of the PoI. This is another advantage over MCMC based methods where the MCMC iterations have to be performed sequentially. To elucidate these ideas, we implement the proposed framework for both QPAT and EIT. In particular, for EIT we use two different reconstruction methods, namely D-Bar based reconstruction \cite{silt_github,silt_20} and OOEIT \cite{Petri24, petri_github} for which we utilize their publicly available MATLAB packages on GitHub. To avoid inverse crime, forward data was generated on a finer mesh and the reconstructions carried on a coarser mesh in all cases. For brevity, we only report here the numerical experiments used to generate reconstructions below, but we carried out similar experiments across multiple noise levels. As expected, the reconstruction quality suffers as relative noise levels are increased, particularly for EIT.

\subsection{Implementation for QPAT}

To evaluate the performance of the proposed framework for uncertaint quantification in the QPAT setting, we consider a numerical experiment to estimate the absorption coefficient $\gamma(x)$ in \eqref{eq:PDE} from noisy internal data for absorbed optical energy density when $\mu(x)$ is assumed to be known. 
\subsubsection{Bayesian inference in measurement space}
Following the framework developed in Section \ref{subsec:qpat_method}, Bayesian inference is performed on the absorbed optical energy density $H$.  To begin our numerical experiments, simulated data for $H$ is generated on a fine mesh at $4\%$ noise wherein for the true absorption parameter $\gamma(x)$ we have considered a function with a constant background with two distinct circular inclusions inside the medium. The value within the inclusions is set to be $0.8$ and $1$ while the background conductivity is set at $0.2$. The diffusion coefficient $\mu(x)$ is assumed to be known and is obtained by smoothing a scaled version of the exact absorption coefficient. The procedure adopted here is the same as \cite[Section 6.3]{afkham2024bayesian}. The forward and inverse problems employ the same positive Dirichlet illumination, chosen as the sum of three Gaussian sources located near the boundary of the unit disk. Specifically, the prescribed boundary data is $g(x)=w_{m_1,s_1}(x)
+w_{m_2,s_2}(x)
+w_{m_3,s_3}(x)$ where $w_{m,s}(x)=s\exp\left(-2\|x-m\|^2\right),$ with $m_1=\frac{1}{2}(\sqrt{2},\sqrt{2}), \,
m_2=\frac{1}{2}(-\sqrt{2},\sqrt{2}), \,
m_3=-m_1,$ and $s_1=10,\,
s_2=2,\,
s_3=5.$
This choice ensures that the optical field remains strictly positive throughout the domain and is the same as in \cite[section6.3]{afkham2024bayesian}. Thereafter we use a coarser mesh for the reconstruction pipeline. A Gaussian prior with Matern covariance is placed on $H$, using smoothness parameter $\nu=1.5$ and correlation length $l=0.06$. Although, not necessarily required, the correlation length was selected using an empirical Bayes procedure and subsequently fixed throughout the numerical experiment.
After observing the noisy measurements, the posterior distribution of $H$ is computed in closed form as described in Section \ref{qpat:posterior}. As the posterior is also Gaussian, we can easily generate as many exact samples from it as needed. For illustration in our numerical experiments, we choose to draw $100$ posterior samples that were then pushed forward through the pointwise reconstruction map described in Section \ref{subsec:qpat_method}. Note that in typical MCMC based reconstruction methods for such high-dimesnional non-linear problems as considered in this work, the effective sample sizes (ESS) are typically around $100$ for MCMC chains of size $2.5\times 10^6$, see e.g  \cite{stuart_16}.
\subsubsection{Posterior uncertainty propagation}
The ensemble of pushed-forward absorption coefficient samples is used to characterize the posterior uncertainty in $\gamma$, see Figure \ref{fig:qpat-eit-mean}. The point estimate for $\gamma$ is given by the posterior mean which is computed as
$\bar{\gamma}
=
\frac{1}{100}
\sum_{k=1}^{100}
\gamma^{(k)},$ where $\gamma^{(k)}$ corresponds to the $k-$th pushed forward sample. Pixelwise posterior quantiles are also computed. In particular, the $2.5\%$ and $97.5\%$ quantiles are used to construct pointwise $95\%$ credible region. Finally, we also plot $95\%$ credible interval along a horizontal slice passing through the inclusions computed from the posterior ensemble. Additionally, we also visualize the credible interval for the absorbed optical energy density $H$ along the same horizontal slice. One can observe that width of the credible intervals for both $H$ and $\gamma$ along the same horizontal slice seems to be of the same order. This observation agrees with the transfer rate predicted by Theorem \ref{thm: 3.4}.

\begin{figure}[htbp]
\centering
\begin{tabular}{cccc}
\includegraphics[width=0.25\textwidth]{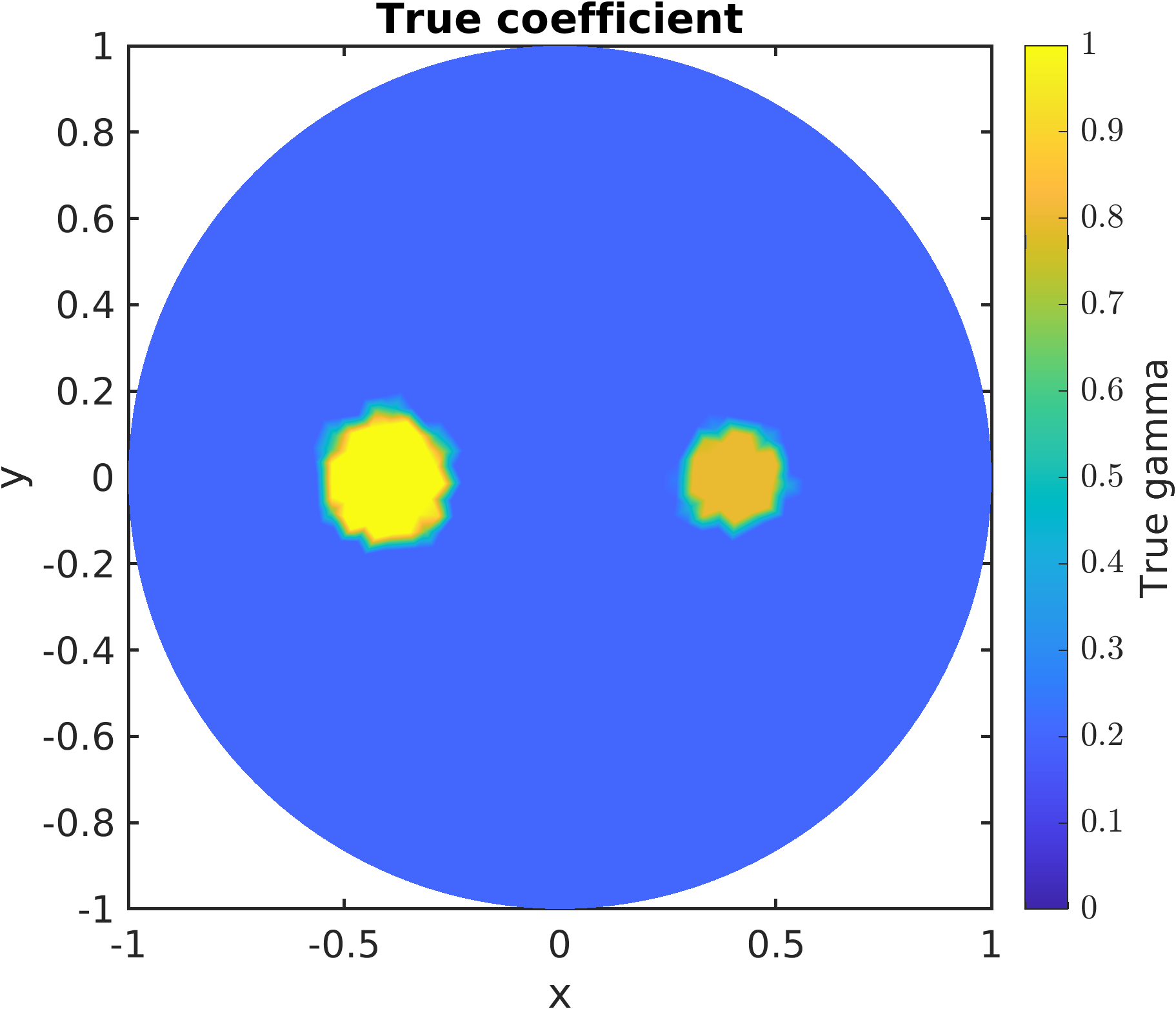}
&
\includegraphics[width=0.25\textwidth]{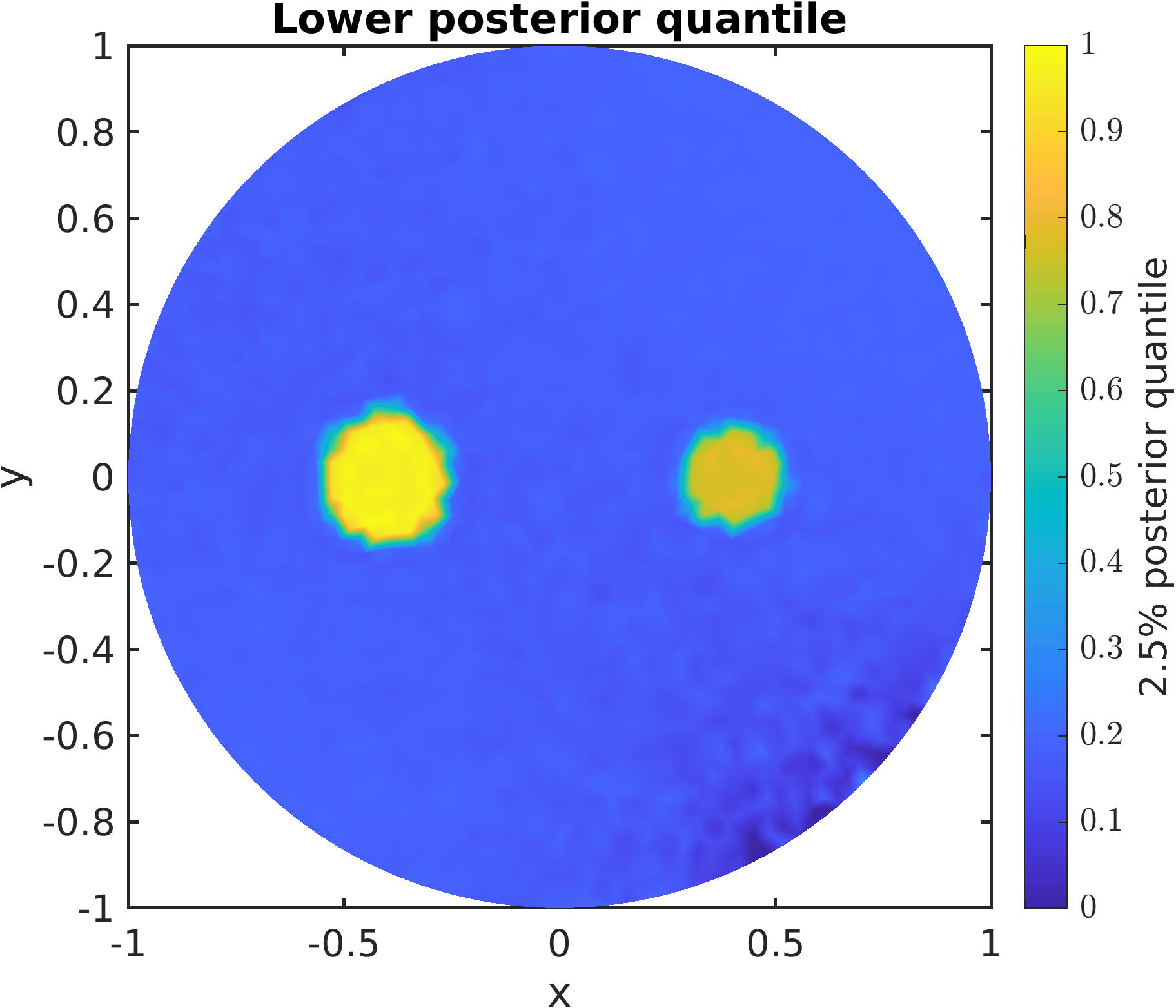}
&
\includegraphics[width=0.28\textwidth]{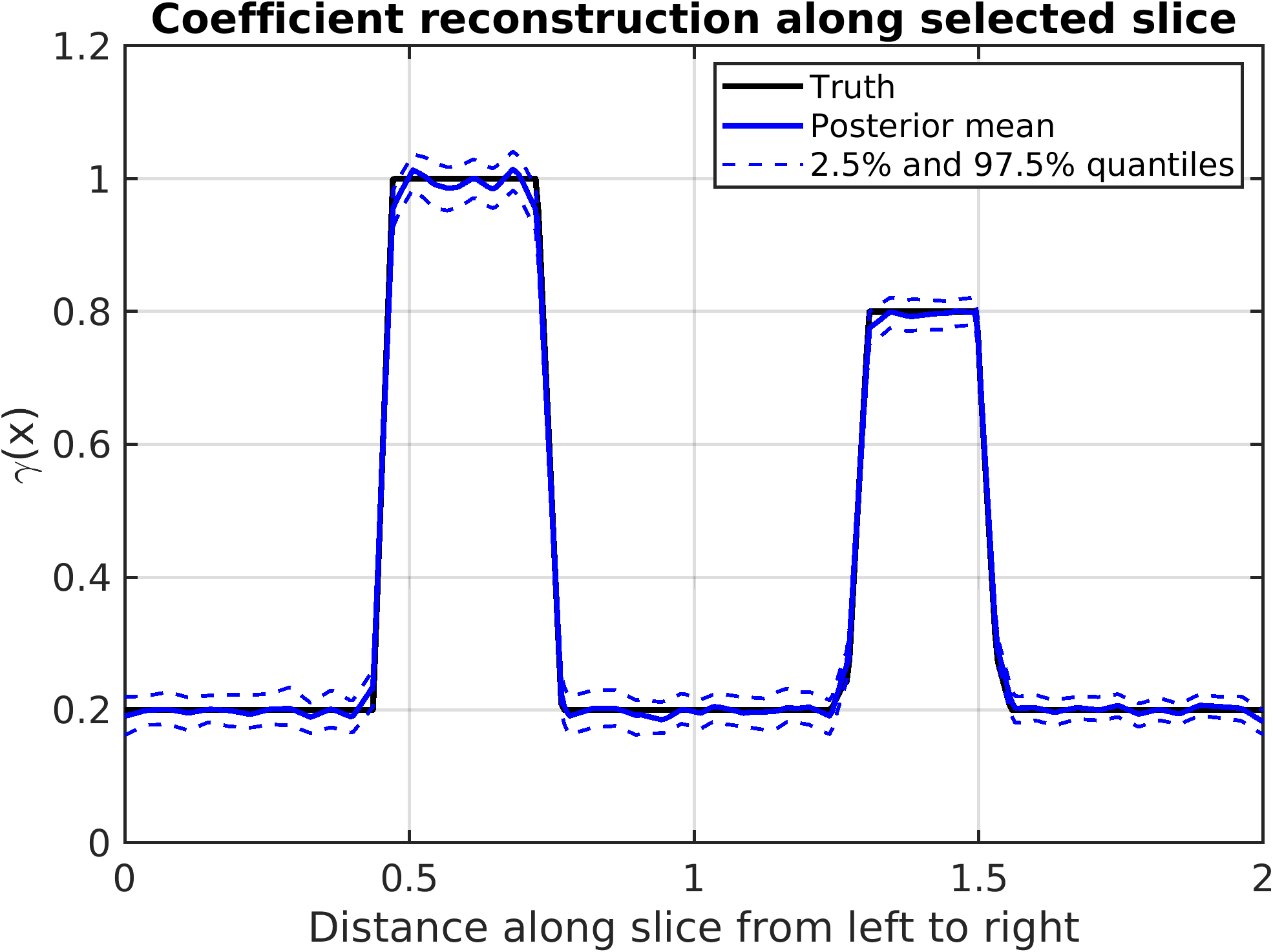}
\\
\includegraphics[width=0.25\textwidth]{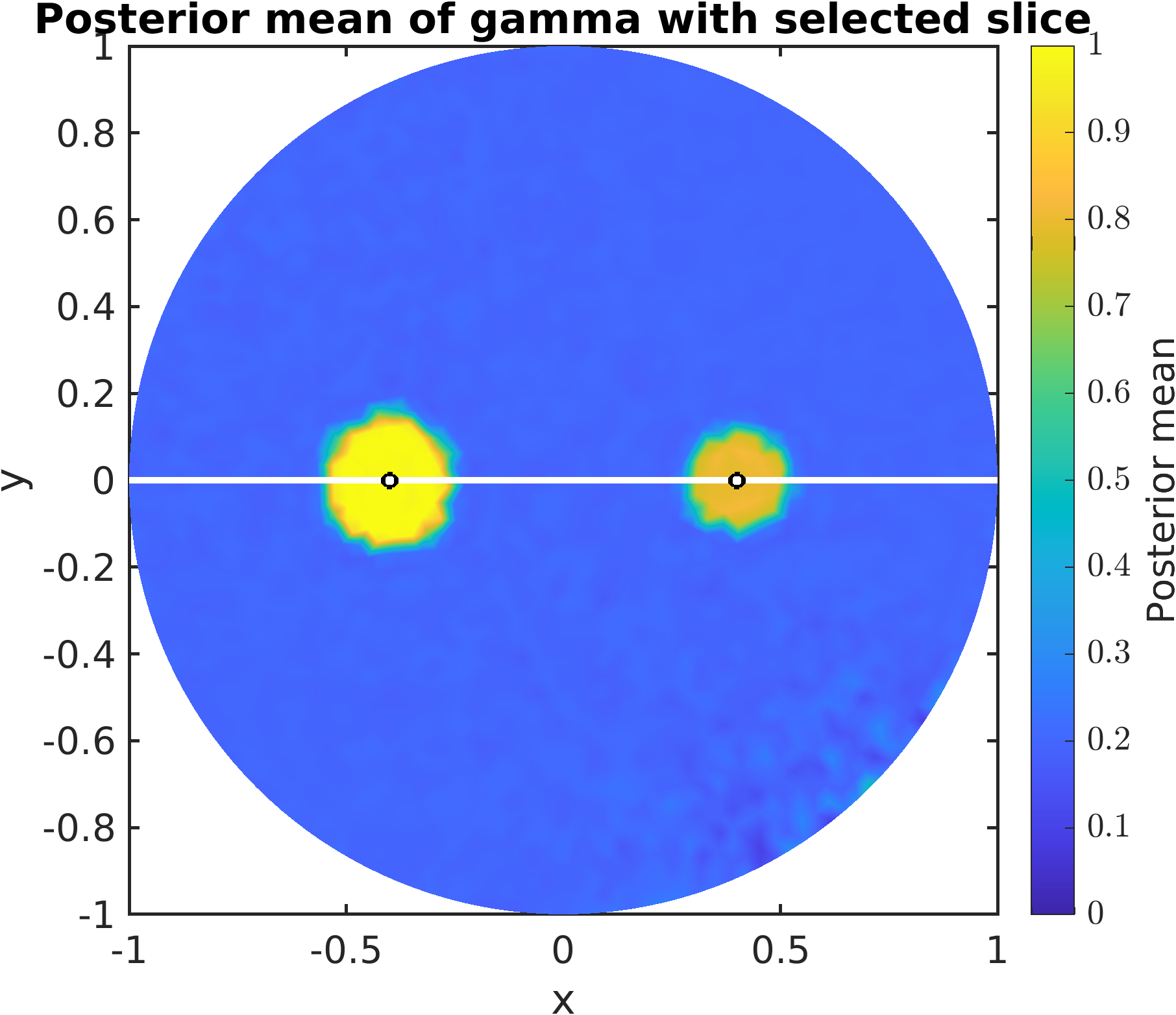}
&
\includegraphics[width=0.25\textwidth]{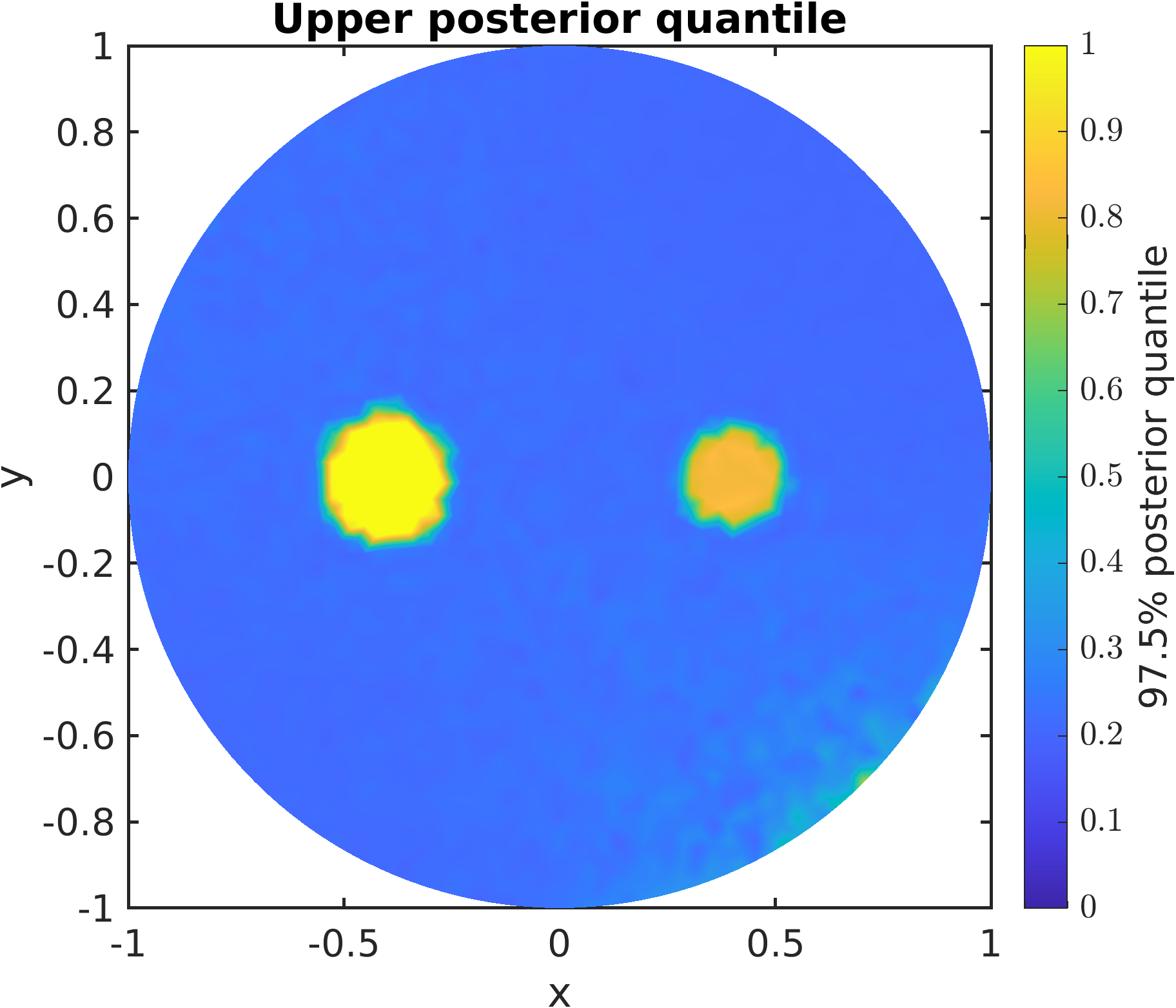}
&
\includegraphics[width=0.28\textwidth]{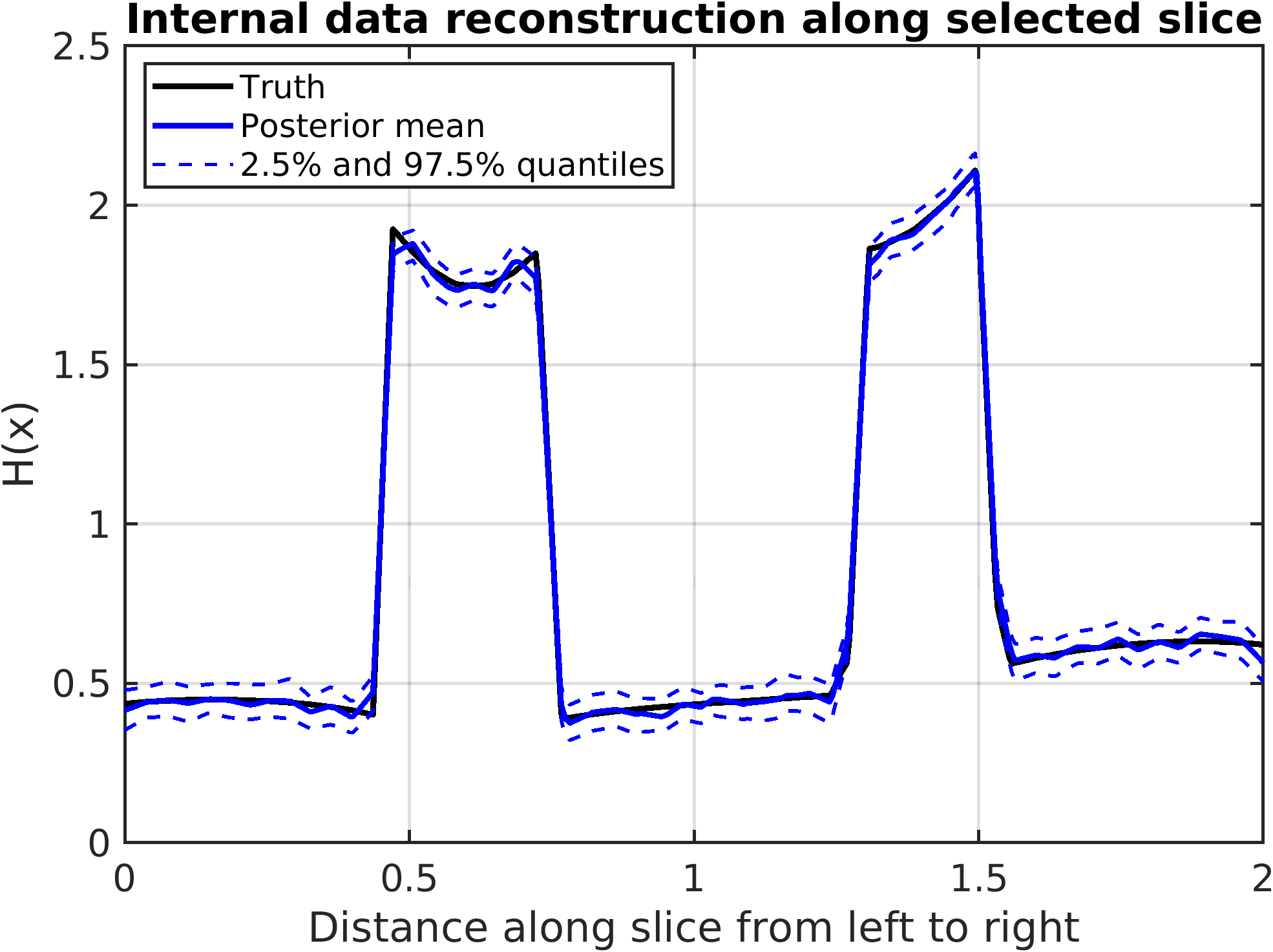}
\end{tabular}
\caption{Posterior mean reconstructions for QPAT (4\% relative noise): The first column shows the true absorption coefficient and the empirical mean of the pushedforward samples of $\gamma$. The second column shows the lower $(2.5\%)$ and upper $(97.5\%)$ credible regions from the obtained pushforward samples. The last column denotes the credible interval along a horizontal slice for $\gamma$ and along the same slice for $H$ respectively for comparision.}
\label{fig:qpat-eit-mean}
\end{figure}

\begin{figure}[htbp]
\centering

\begin{tabular}{cccc}
\includegraphics[width=0.25\textwidth]{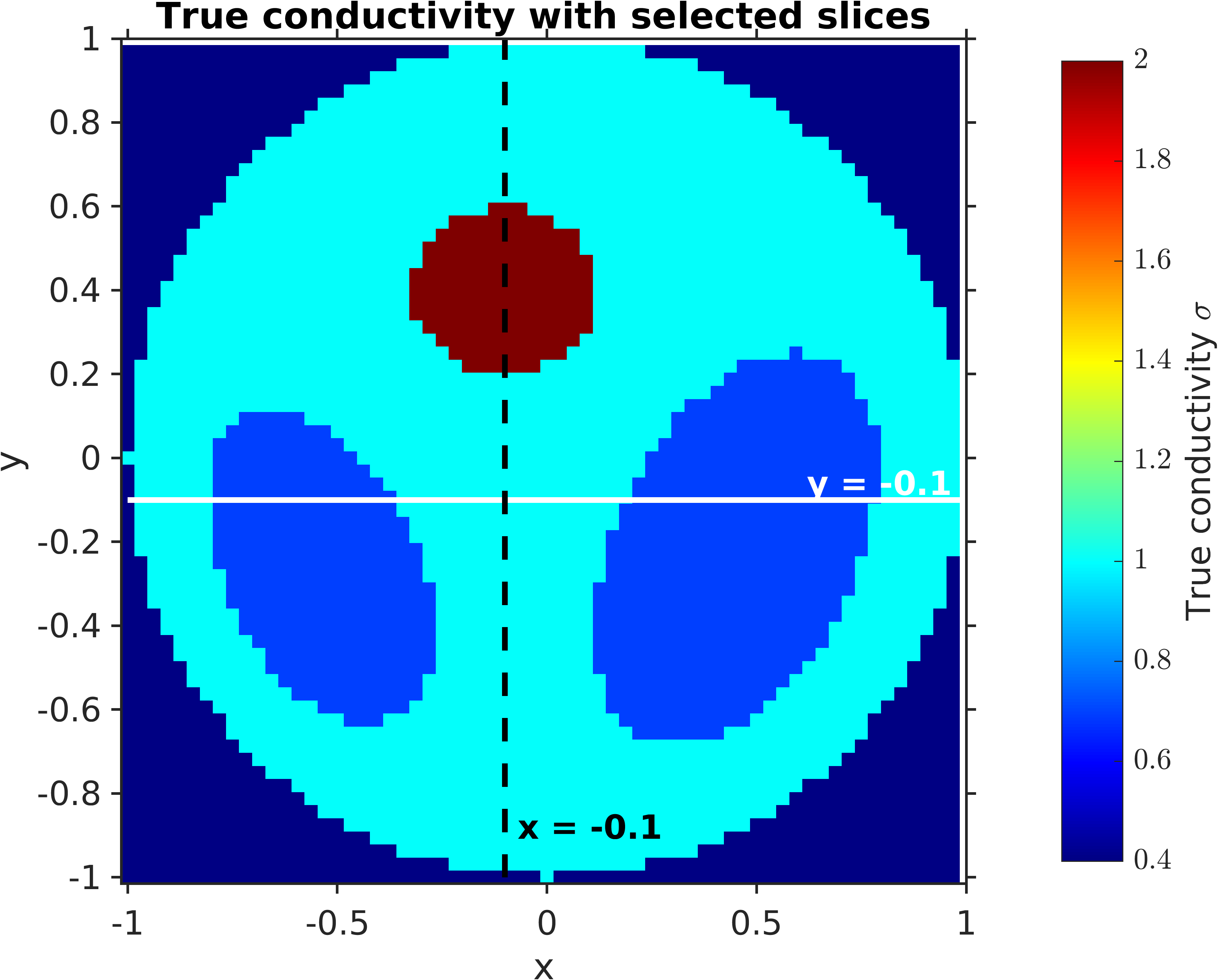}
&
\includegraphics[width=0.25\textwidth]{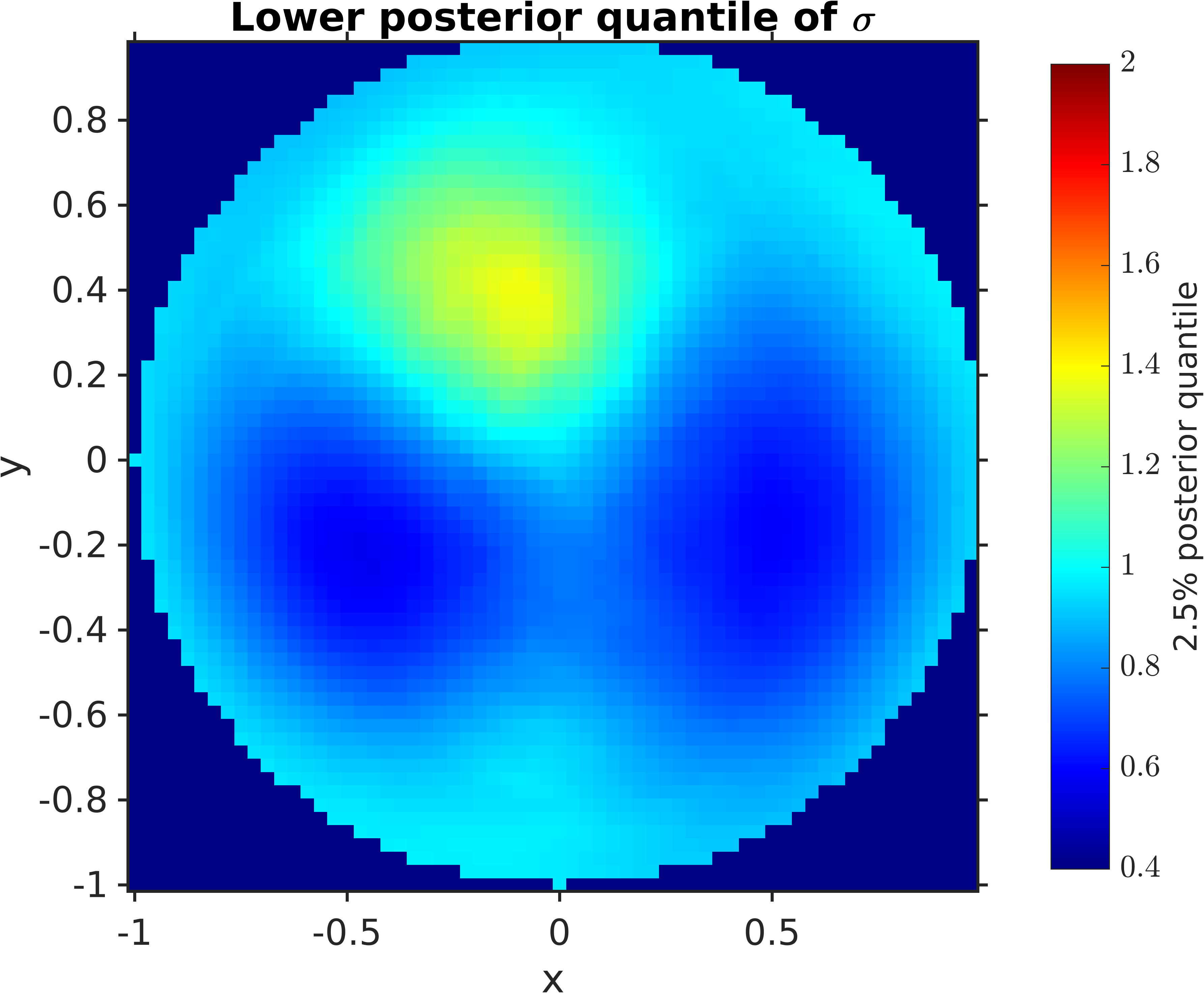}
&
\includegraphics[width=0.28\textwidth]{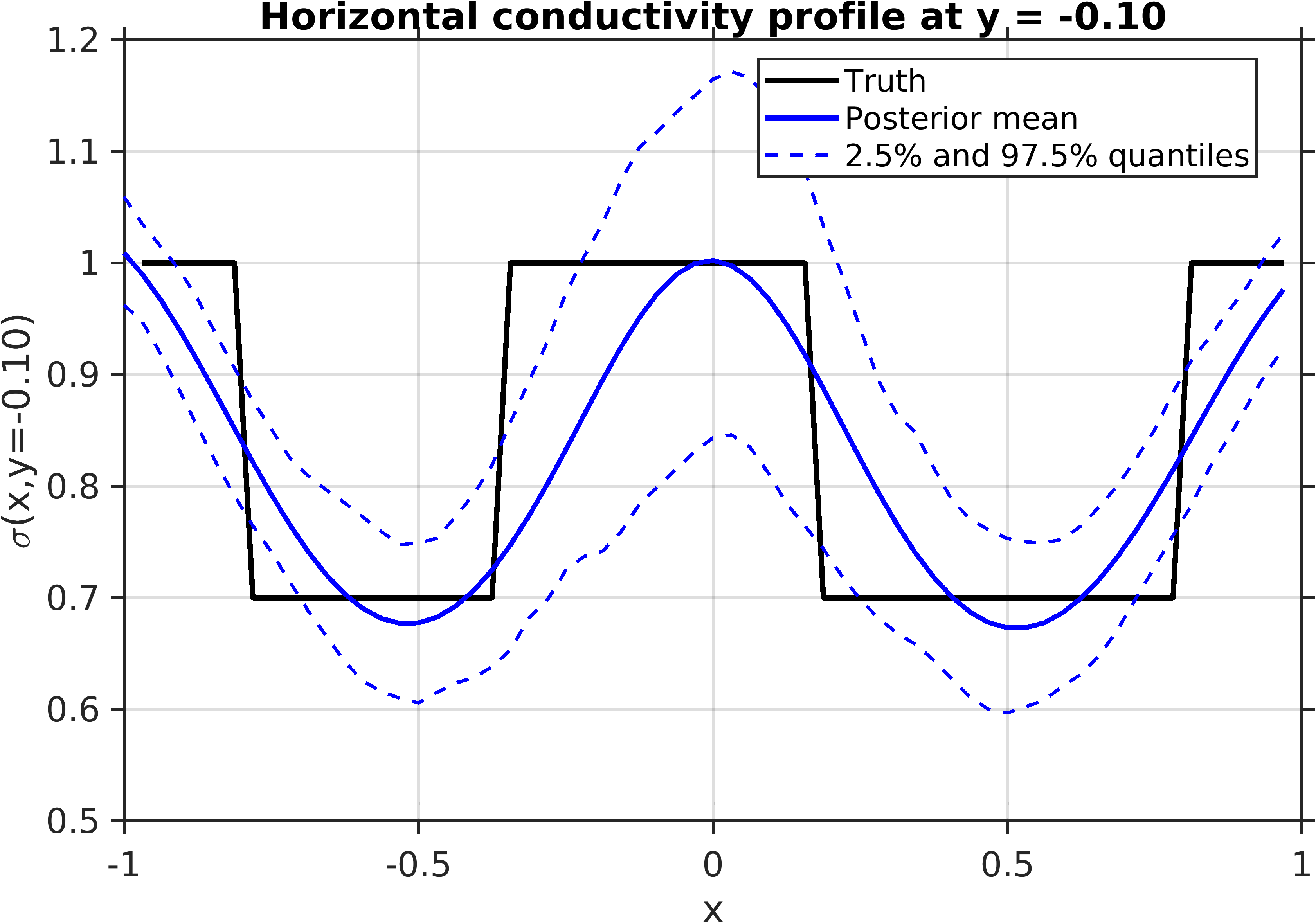}
\\
\includegraphics[width=0.25\textwidth]{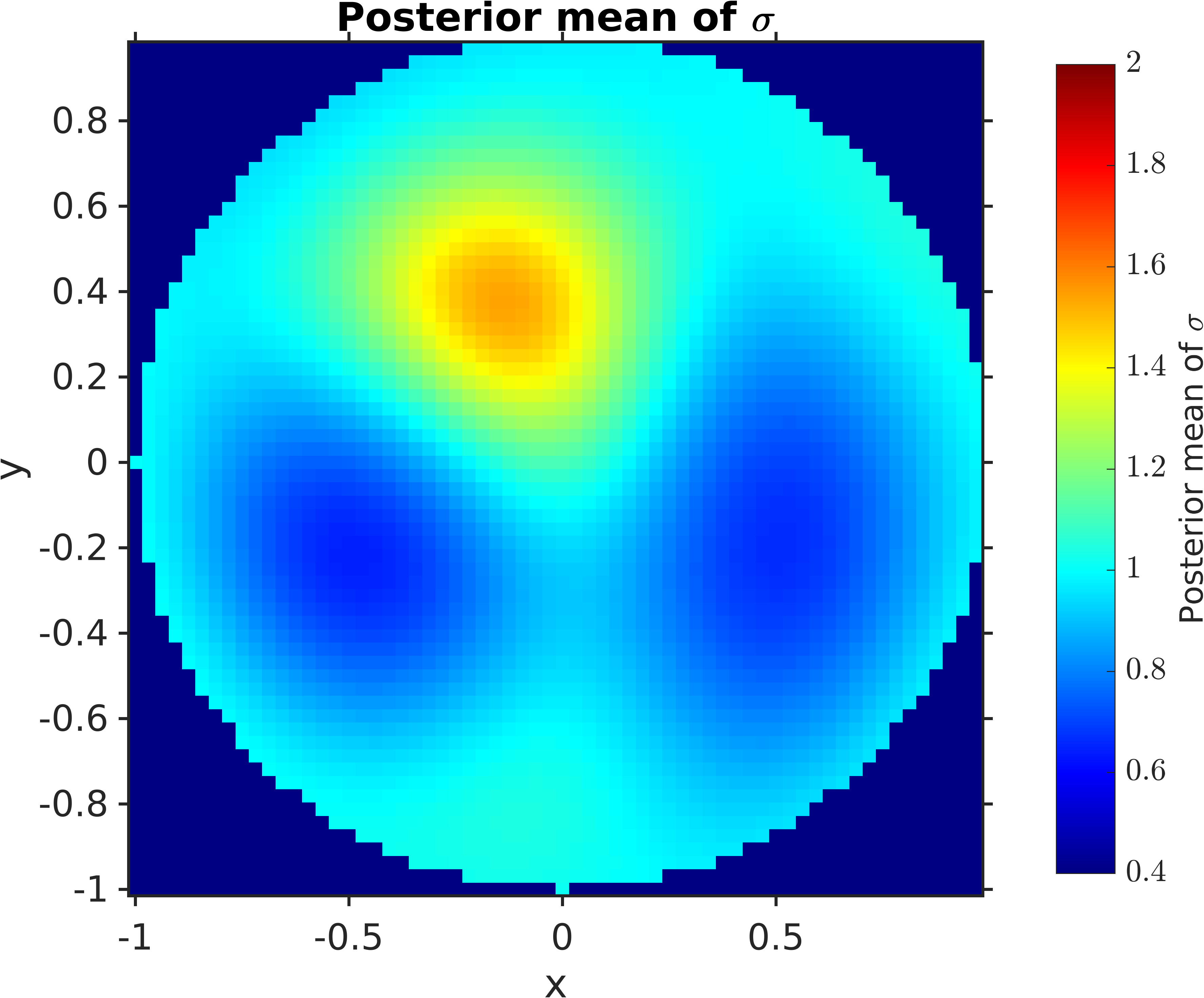}
&
\includegraphics[width=0.25\textwidth]{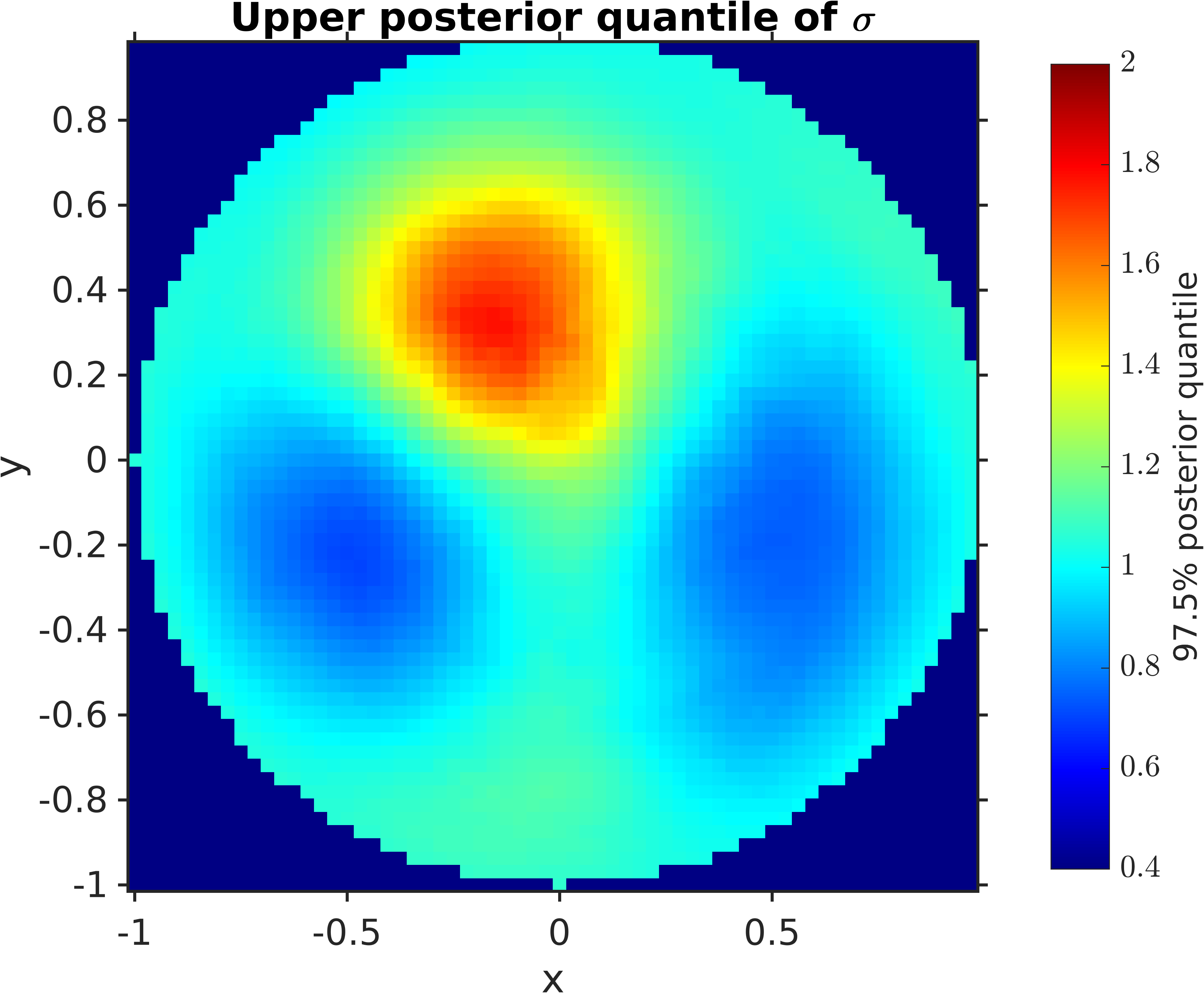}
&
\includegraphics[width=0.28\textwidth]{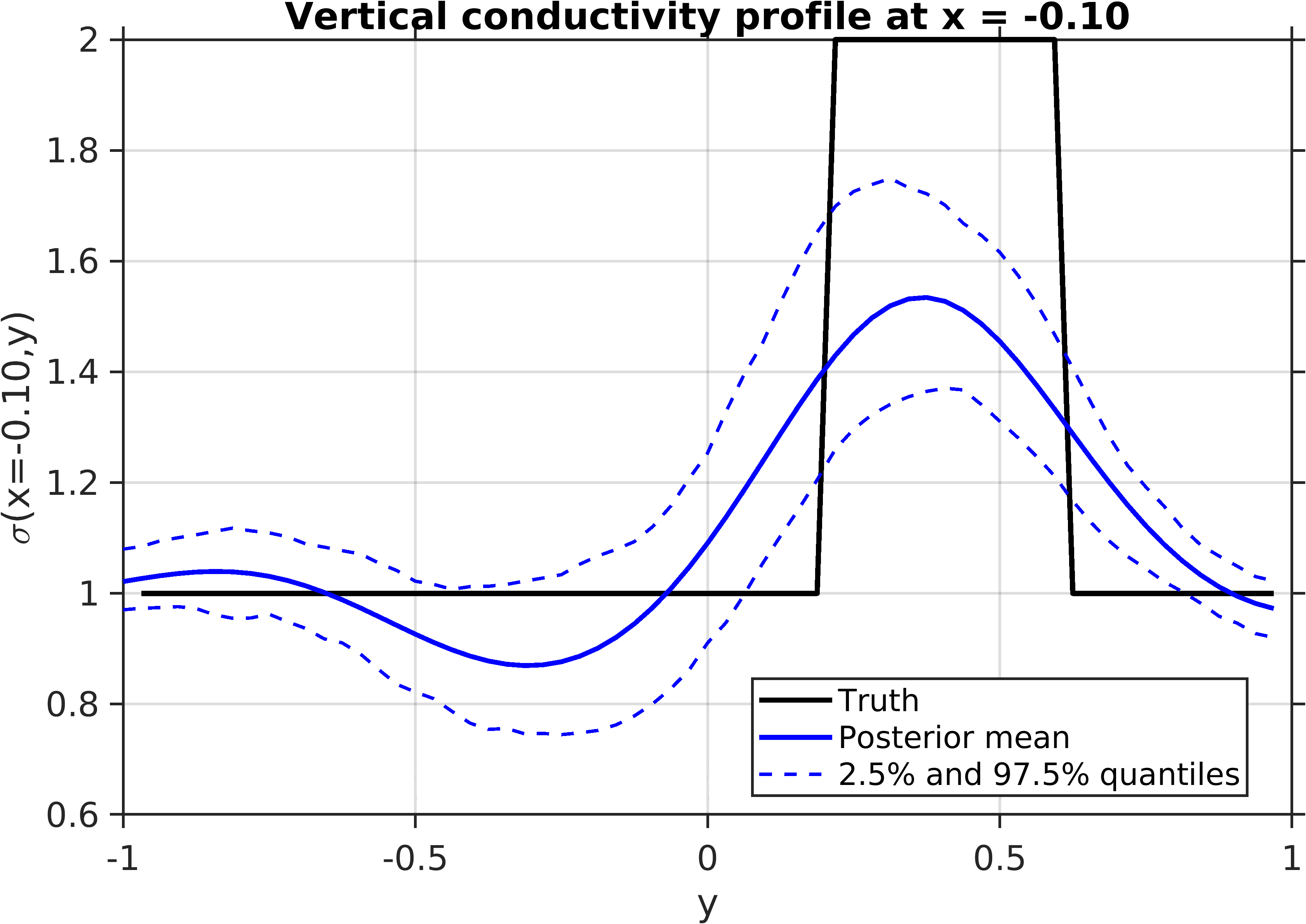}
\end{tabular}
\caption{Posterior reconstructions for EIT using D-bar method (2\% relative noise): The first column has the true conductivity and the mean of the posterior pushforward samples, the second column has the lower ($2.5\%$) and upper ($97.5\%$) credible regions plotted using 100 posterior samples. The last column shows the credible interval along the horizontal and vertical slices respectively.}
\label{fig:dbar-eit-mean}
\end{figure}

\begin{figure}[htbp]
\centering

\begin{tabular}{cccc}
\includegraphics[width=0.25\textwidth]{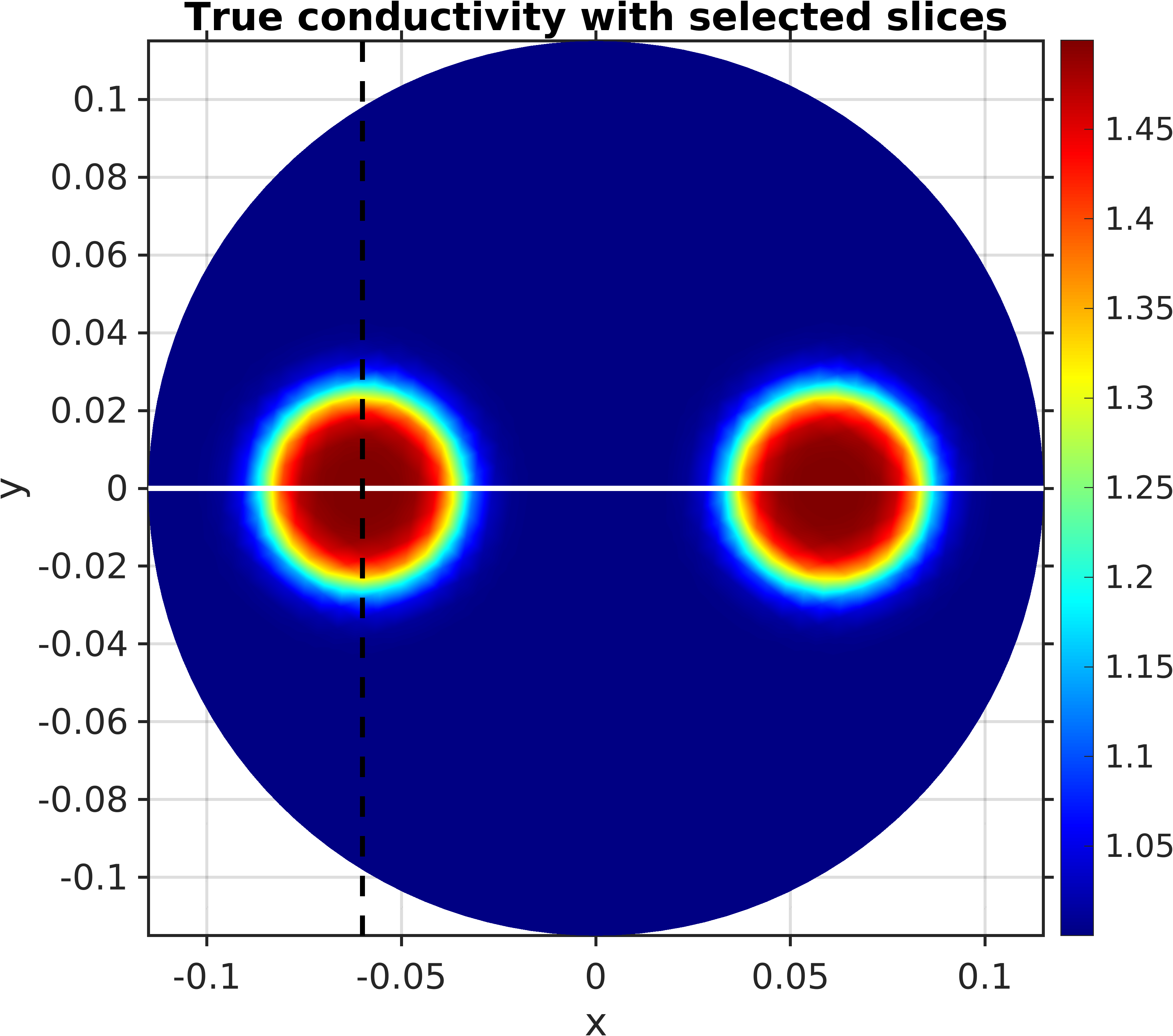}
&
\includegraphics[width=0.25\textwidth]{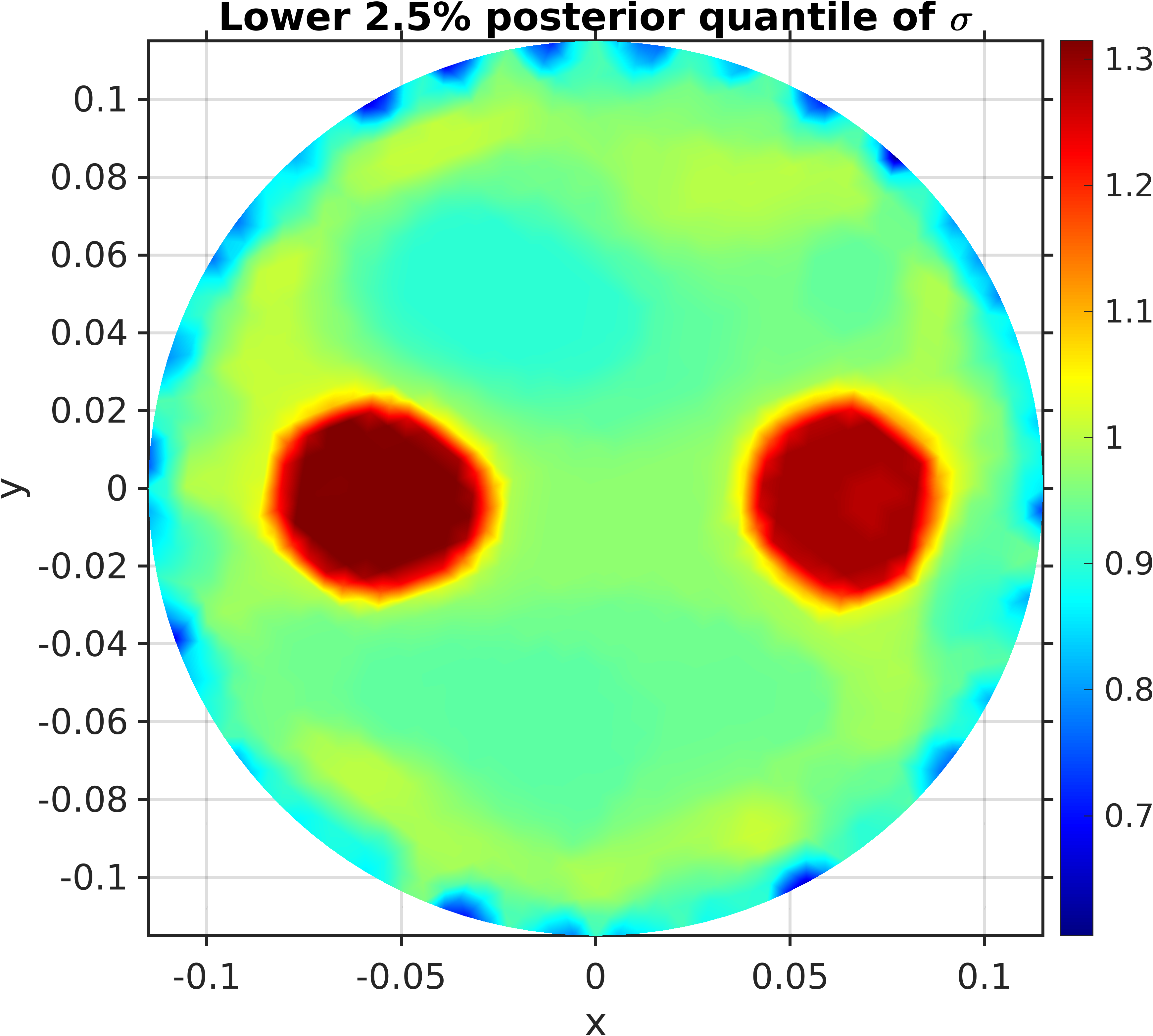}
&
\includegraphics[width=0.30\textwidth]{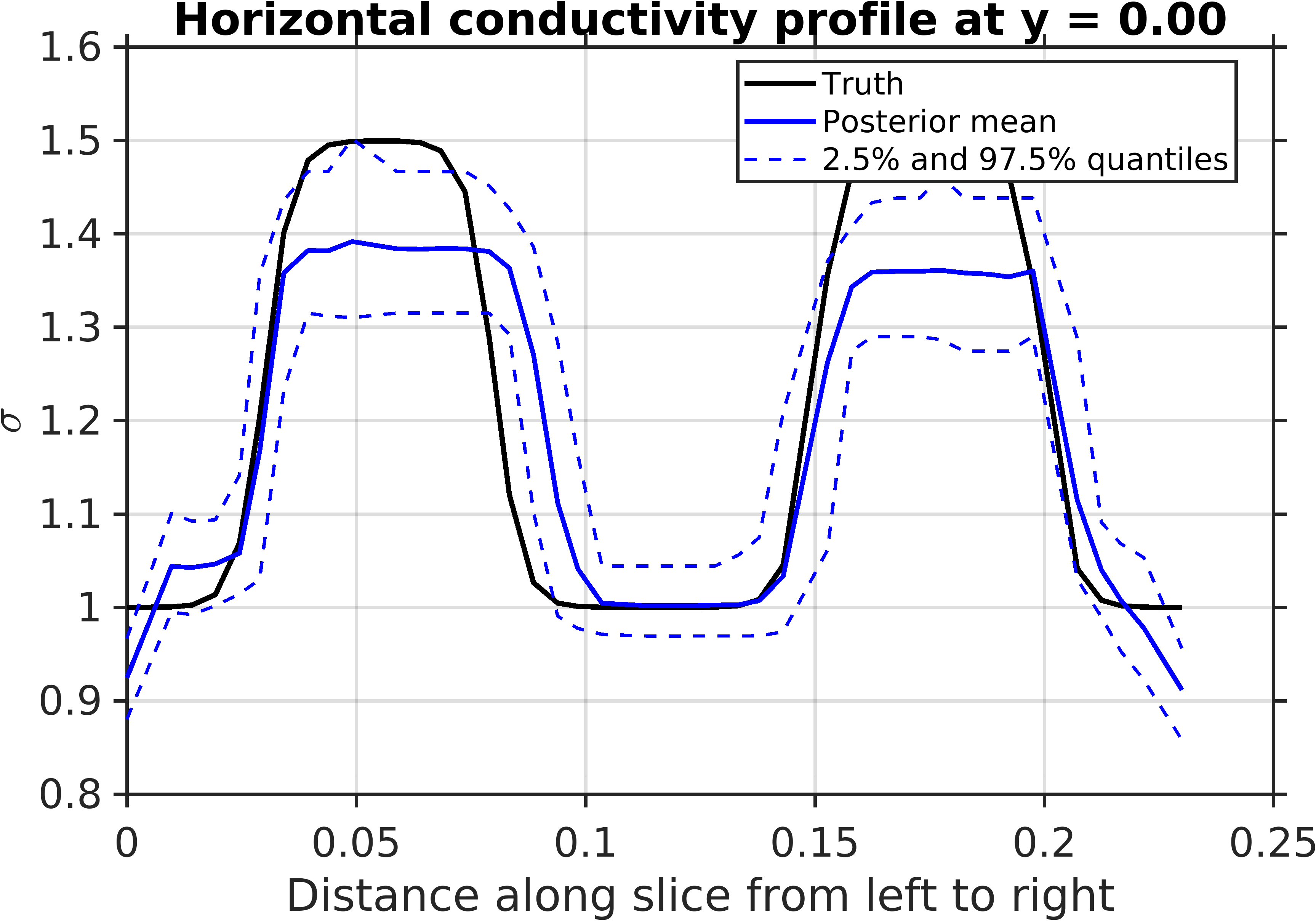}
\\
\includegraphics[width=0.25\textwidth]{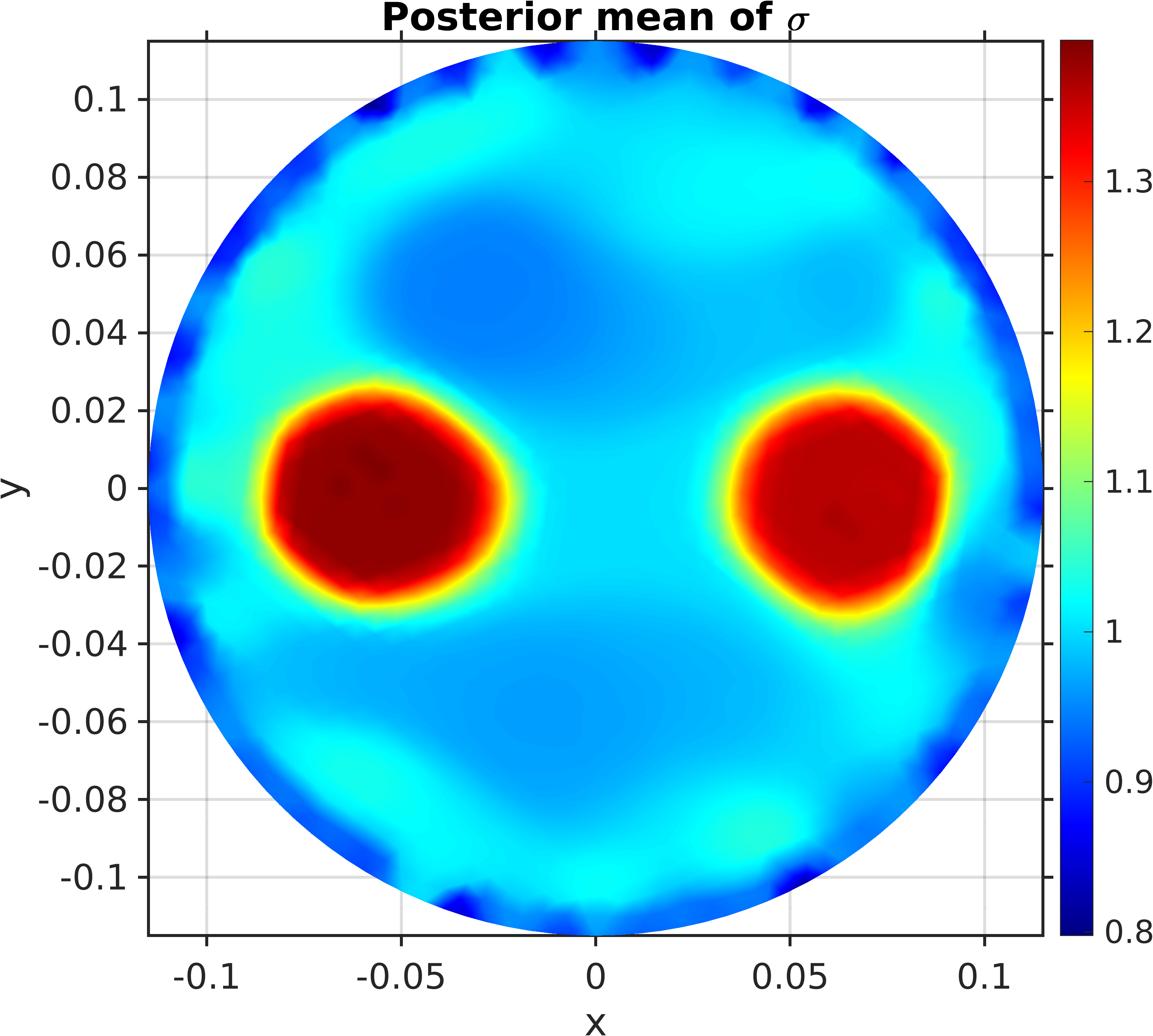}
&
\includegraphics[width=0.25\textwidth]{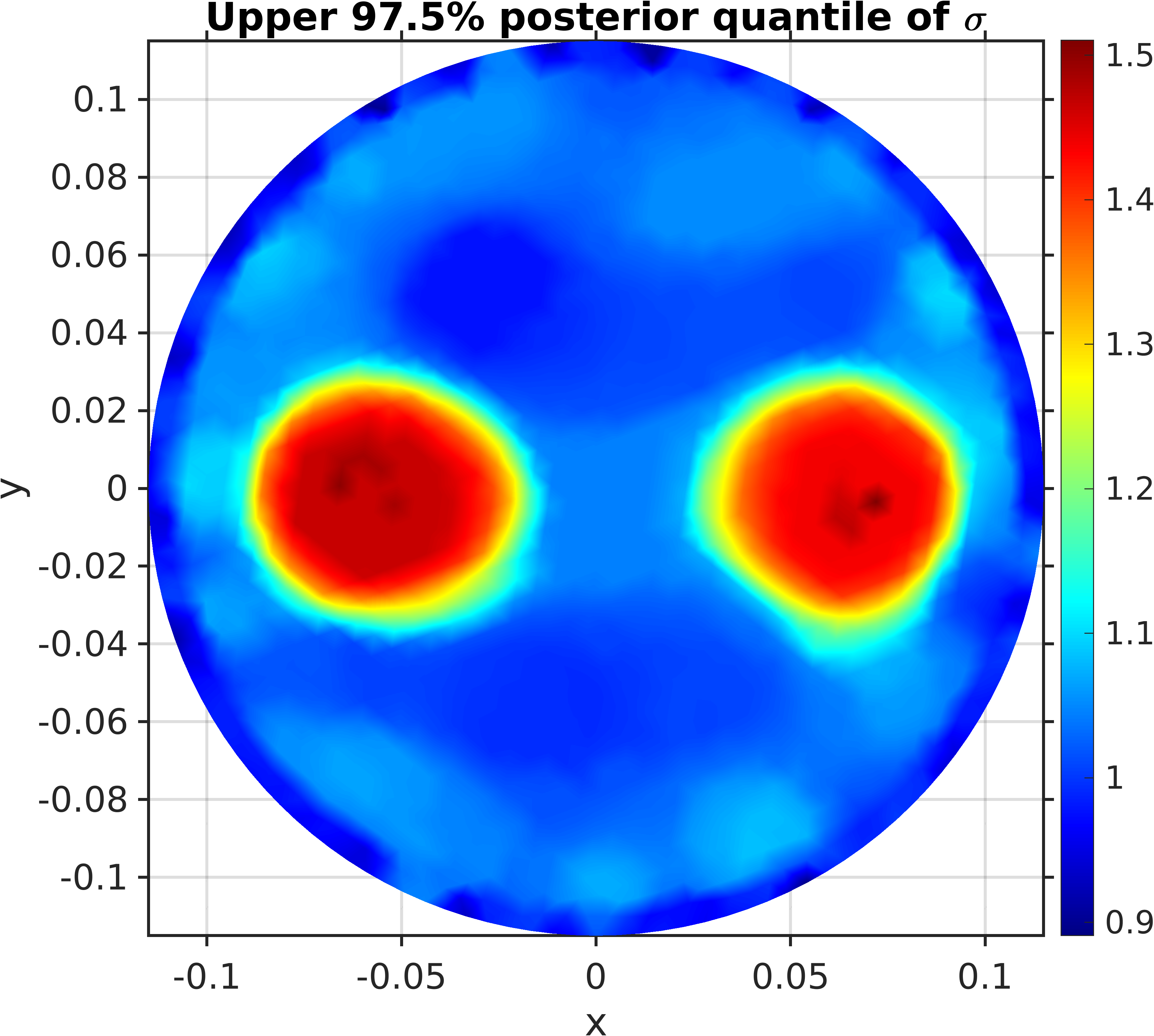}
&
\includegraphics[width=0.30\textwidth]{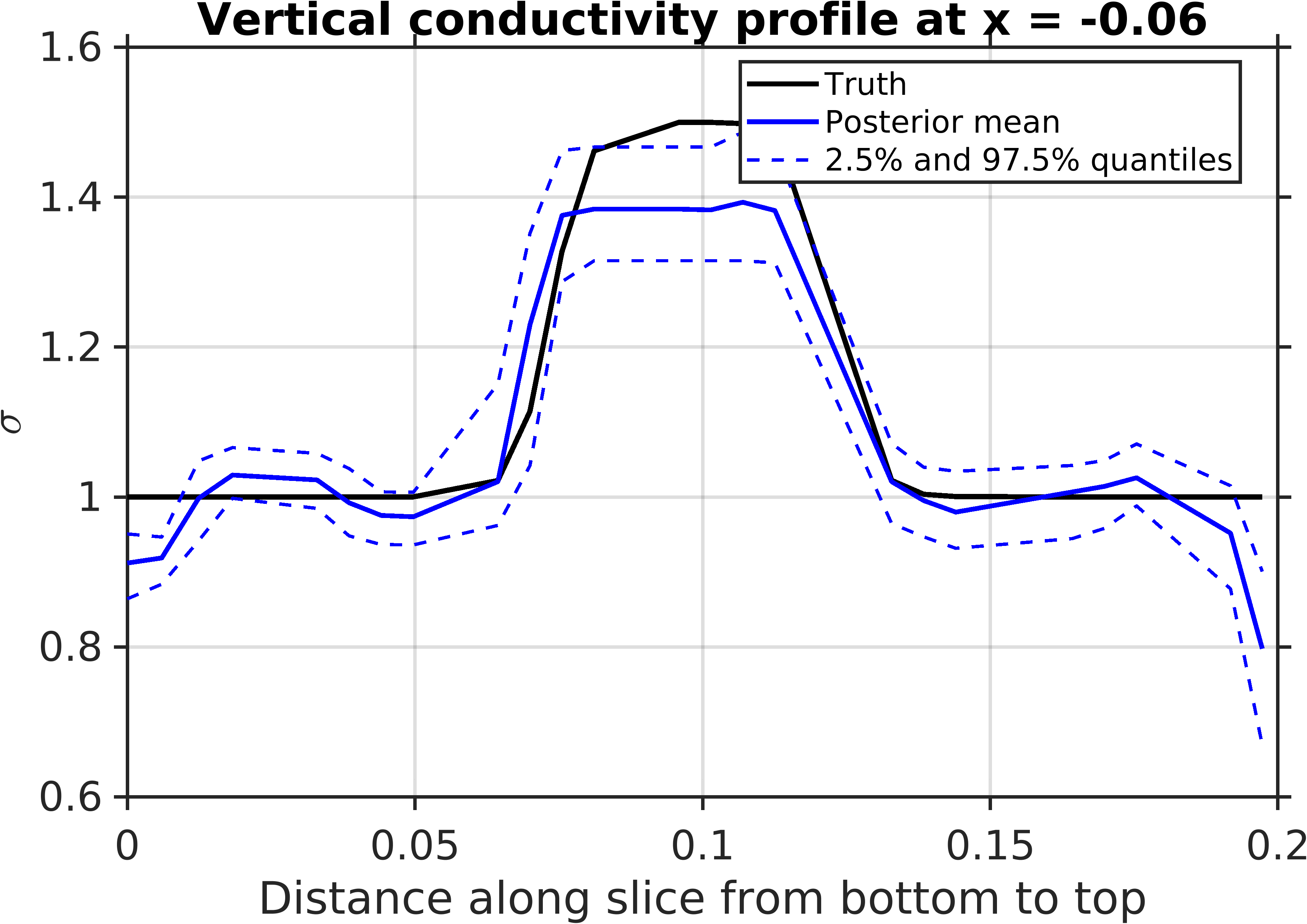}
\end{tabular}
\caption{Posterior reconstructions for EIT using OOEIT method (2\% relative noise): The first column has the true conductivity and the mean of the posterior pushforward samples, the second column has the lower ($2.5\%$) and upper ($97.5\%$) credible regions plotted using 100 posterior samples. The last column shows the credible interval along the horizontal and vertical slices respectively.}
\label{fig:ooeit-eit-mean}
\end{figure}
\subsection{Implementation for EIT}
\subsubsection{Bayesian inference in measurement space}
We begin by noting that, D-bar is based on the continuum model, whereas OOEIT uses the Complete Electrode Model (CEM) solved with the finite element method. As such, the relevant measurement operator for the D-bar method is the NtD operator (equivalent to measuring the DtN operator), whereas for OOEIT, the forward model gives the current-voltage data as measured on the boundary which models the physical data obtained in experiments. In either case, the measurements are obtained at the boundary and we will refer to such data as the boundary operator data. 
In line with our description in section \ref{sec:priorDtN} we generate a Gaussian prior for the relevant boundary operator in the following way. First, an ensemble of $2000$ samples of the log-normal conductivity field are drawn. A Gaussian random field with Matern covariance was sampled for the log-conductivity and exponentiated to ensure positivity. The Matern covriance kernel used the same smoothness scale $\nu=1.5$ in both cases, however the correlation length parameter was chosen to be $0.15$ in D-bar and $0.05$ in OOEIT. Note that length scale parameter controls spatial variations.  For each such conductivity sample the corresponding EIT forward problem is solved, and the resulting measurement data is stored. The empirical mean and covariance of the measurement ensemble are then computed and a low-rank (rank=300 in our experiments) Gaussian approximation is then computed as per the theory outlined in section \ref{sec:priorDtN}. After observing the noisy measurements, the posterior for the relevant boundary measurement operator is computed as per eq. \eqref{formula:postDtN}. In each case $100$ exact samples of the posterior are then pushedforward through either the D-bar or the OOEIT reconstruction packages described below to obtain pushforward samples of the conductivity field. Note that since the posterior measure of the boundary operator is a Gaussian, we can sample exactly from it rather than having to resort to MCMC based methods.
\par \noindent \textbf{D-bar reconstruction and phantom}

\noindent For the D-bar experiments, the true conductivity was chosen to be the standard heart-and-lungs phantom included in the D-bar MATLAB implementation. The imaging domain is the unit disk. The phantom consists of a homogeneous background together with two lower-conductivity `lung' inclusions and a higher-conductivity `heart' inclusion. 
The D-bar method is a direct nonlinear reconstruction method for EIT. It uses the boundary operator to compute complex geometrical optics solutions, forms a nonlinear scattering transform, solves the associated \(\bar{\partial}\)-equation in the spectral variable, and recovers the conductivity on a Cartesian grid. In the present pipeline, all uncertainty enters through the posterior distribution of the NtD operator; the D-bar reconstruction itself is deterministic.

\par \noindent \textbf{OOEIT reconstruction and phantoms}

\noindent For the OOEIT experiments, 
the reconstruction tests used separate deterministic inclusion phantoms as ground truth. The conductivity consists of two smooth circular inclusions centered at \((-0.06,0)\) and \((0.06,0)\), each with radius \(0.025\). The background conductivity is $1$ while conductivity inside the inclusions is $1.5$. For each posterior electrode-data sample, conductivity is reconstructed using the Gauss-Newton solver implemented in the OOEIT package.  Starting from an initial estimate for the conductivity, the forward model is repeatedly linearized about the current iterate, producing a sequence of Jacobian matrices that relate perturbations in conductivity to changes in the electrode measurements. At each iteration, a regularized linear system is solved to compute an update to the conductivity, after which the forward problem is recomputed and the process is repeated until convergence or until the prescribed number of iterations is reached. Within the OOEIT package, we used the total variation regularization together with a positivity-promoting prior to stabilize the reconstruction and preserve boundary features.
\subsubsection{Posterior uncertainty propagation}
For both D-bar and OOEIT, the  conductivity ensemble
obtained by pushforwarding the posterior samples of the boundary operator
is used to compute the posterior mean 
$ \bar{\sigma} = \frac{1}{100} \sum_{i=1}^{100}
\sigma^{(k)}$ where $\sigma^{(k)}$ corresponds to the $k-$th pushedforward sample.
Pixelwise posterior quantiles are also computed. In particular, the \(2.5\%\), and \(97.5\%\) quantiles are used to summarize uncertainty. Additionally, horizontal and vertical slices are chosen along which the posterior values are plotted to get the $95\%$ credible intervals. The results from these experiments are plotted in figures \ref{fig:dbar-eit-mean} and \ref{fig:ooeit-eit-mean} below.
\section{Conclusion}\label{sec: conclusion}
In this work, we developed a computationally efficient Bayesian framework for nonlinear PDE inverse problems and applied it to QPAT and EIT. The proposed two-stage pushforward approach avoids MCMC sampling by combining Bayesian regression for an auxiliary variable with a deterministic reconstruction map. We rigorously justified the interpretation of the induced posterior as Bayesian posterior under the induced prior and a transformed forward map for both QPAT and EIT separately. Furthermore, we derived posterior contraction rates for both problems. The numerical results demonstrate that the method provides accurate reconstructions together with reliable uncertainty estimates at a significantly lower computational cost than conventional Bayesian approaches. Finally, we note that we although we focused on QPAT and EIT, the framework is quite general and can be extended to other nonlinear inverse problems where a suitable auxiliary variable and reconstruction map are available.
\section*{Acknowledgement} AI tools (ChatGPT) have been used for improving and refactoring MATLAB codes in this project, especially to make the Bayesian framework compatible with existing packages for EIT.
\bibliographystyle{abbrv}
\bibliography{refs}
\end{document}